\theoremstyle{plain}
\newtheorem*{Th*}{Theorem}
\newtheorem{Th}{Theorem}[section]
\newtheorem{Cor}[Th]{Corollary}
\newtheorem*{Cor*}{Corollary}
\newtheorem{Prop}[Th]{Proposition}
\newtheorem{Lemma}[Th]{Lemma}
\newtheorem{Ex}[Th]{Example}
\theoremstyle{definition}
\newtheorem{Not}[Th]{Notation}
\newtheorem{Rmk}[Th]{Remark}
\newcommand{\vp}{\varphi}
\newcommand{\sm}{\setminus}
\newcommand{\im}{\operatorname{im}}
\newcommand{\Gl}{\operatorname{GL}}
\newcommand{\ann}{\operatorname{ann}}
\newcommand{\Hom}{\operatorname{Hom}}
\newcommand{\projdim}{\,\operatorname{projdim}\,}
\newcommand{\Ext}{\operatorname{Ext}}
\newcommand{\Sym}{\operatorname{Sym}}
\newcommand{\Soc}{\operatorname{Soc}}
\def\ZZ{\mathbb Z}
\def\KK{\mathbb K}
\def\CC{\mathbb C}
\def\PP{\mathcal{P}}
\def\DD{\mathcal{D}}
\def\maxm{\frak{m}}
\newcommand{\Rees}{\mathcal{R}}
\title{Local Cohomology of Certain Determinantal Thickenings}
\begin{document}
\author{Hunter Simper}\thanks{The author was partially supported by NSF grant DMS-2100288 and by Simons Foundation Collaboration Grant for Mathematicians \#580839}
\address{Hunter Simper \\ Department of Mathematics \\ Purdue University \\
West Lafayette \\ IN 47907 \\ USA} 
\email{hsimper@purdue.edu}
\maketitle

\begin{abstract}
Let $R=\CC[\{x_{ij}\}]$ be the ring of polynomial functions in $mn$ variables where $m> n$. Set $X$ to be the $m\times n$ matrix in these variables and $I:=I_n(X)$ the ideal of maximal minors of $X$.  We consider the rings $R/I^t$; for $t\gg 0$ the depth of $R/I^t$ is equal to $n^2-1$, and we show that each local cohomology module $H^{n^2-1}_\maxm(R/I^t)$ is a cyclic $R$-module. We also compute the annihilator of $H^{n^2-1}_\maxm(R/I^t)$ thereby completely determining its $R$-module structure.

In the case that $X$ is a $n\times (n-1)$ matrix we describe a map between the Koszul complex of the $t$-powers of the maximal minors and a free resolution of $R/I^t$. We use this map to explicitly describe the modules $\Ext_R ^n(R/I^t,R)$ as submodules of the top local cohomology module $H_I^n(R)$. Moreover, we can realize the filtration $\bigcup_i\Ext_R ^n(R/I^t,R)= H_I^n(R)$ in terms of differential operators. Utilizing this description, along with an explicit isomorphism $H_I^n(R) \cong H_\maxm^{n(n-1)}(R)$, we determine the annihilator of $\Ext_R ^n(R/I^t,R)$ and hence by graded local duality give another computation of the annihilator of $H^{(n-1)^2-1}_\maxm(R/I^t)$. 
\end{abstract}

\tableofcontents

\section{Introduction}
Let $I$ be a homogeneous ideal in a polynomial ring $R$. Then $I$ defines a projective variety and one may consider its thickenings, i.e., the varieties defined by the ideals $I^t$. Understanding the ideals $I^t$ is an important component of understanding the singularities of the variety defined by $I$. For example, they comprise the graded components of Rees algebras and also appear in the study of the functors $H_I^i(-)$. 
It was shown in \cite{BBLZ19Stabilizationofthecohomologyofthickenings} that under certain conditions the graded components of the local cohomology modules $H_\maxm^i(R/I^t)$ stabilize for sufficiently large $t$. This recent work has brought renewed attention to thickenings and created an interest in their homological properties and invariants. 

In the case that $I=I_r(X)$ and $R=\CC[X]$ where $X$ is matrix of indeterminates, the modules $H^i_\maxm(R/I^t)$, $\Ext_R ^i(R/I^t,R)$ and $H_I^i(R)$ have been studied extensively and successfully using representation theoretic techniques. In \cite{Raicu-Weyman-Witt14Localcohomologywithsupportinidealsofmaximalminorsandsub-maximalPfaffians},
\cite{RaicuWeyman14Localcohomologywithsupportingenericdeterminantalideals} and \cite{Raicu18Regularityandcohomologyofdeterminantalthickenings} Raicu--Weyman--Witt, Raicu--Weyman and Raicu described the $\Gl$-equivariant structure of $\Ext_R ^i(R/I^t,R)$ and $H_I^i(R)$. These results have been used by Kenkel and Li in \cite{Kenkel20LengthsofLocalCohomologyofThickenings} and \cite{Li21Anoteonthemultiplicitiesofthedeterminantalthickeningsofmaximalminors} to study the asymptotic length of $H_\maxm^i(R/I^t)$ and find formulas for the higher epsilon multiplicity of $I$. In a similar flavor, the regularity of $I^t$ was described in \cite{RaicuWeyman14Localcohomologywithsupportingenericdeterminantalideals} and \cite{Raicu18Regularityandcohomologyofdeterminantalthickenings} along with a classification of which $\Gl$-invariant ideals satisfy the property that $H_I^i(R)=\bigcup_t \Ext^i(R/I^t,R)$. 
%Could also mention syzygies Raicu-Weyman, Haung

In this paper we focus on the case that $I\subseteq \CC[X]$ is the ideal of maximal minors of a $m\times n$ generic matrix $X$, with $m>n$ and examine $H^{n^2-1}_\maxm(R/I^t)$. For sufficiently large $t$, $H^{n^2-1}_\maxm(R/I^t)$ is the first non-vanishing local cohomology module of $R/I^t$ and it was shown by Li that $n^2-1$ is the only cohomological index to yield a nonzero finite length module \cite{Li21Anoteonthemultiplicitiesofthedeterminantalthickeningsofmaximalminors}. In Proposition \ref{prop-cyclic R module, general case}, we will show that $H^{n^2-1}_\maxm(R/I^t)$ is in fact a cyclic $R$-module. This module has also been examined in the case that $X$ is $2\times 3$ matrix in \cite{Kenkel20IsomorphismsBetweenLocalCohomologyModulesAsTruncationsofTaylorSeries} where Kenkel explicitly describes a generator of $[H^3_\maxm(R/I^t)]_0$ via the \v{C}ech complex on the variables of $R$.

The aforementioned results about $H_\maxm^{i}(R/I^t)$ speak about the structure of its graded components, i.e., its structure as a graded $\CC$-vector space. Additionally, the description of $\Ext_R ^i(R/I^t,R)$ given in \cite{Raicu-Weyman-Witt14Localcohomologywithsupportinidealsofmaximalminorsandsub-maximalPfaffians} is as a $\Gl$-representation and a priori does not speak on its structure as an $R$-module. In this paper we study the $R$-module structure of these modules and explicitly describe this structure for certain $\Ext$ and local cohomology modules.

We proceed by investigating the modules $\Ext_R ^i(R/I^t,R)$ via the natural map \\
$\Ext_R ^i(R/I^t,R)\to H_I^i(R)$. In the case that $I$ is the ideal of maximal minors of a generic matrix, the natural map is an injection \cite{Raicu-Weyman-Witt14Localcohomologywithsupportinidealsofmaximalminorsandsub-maximalPfaffians}, hence describing $\Ext_R ^i(R/I^t,R)$ is equivalent to describing its image in $H_I^i(R)$. 
To understand the map $\Ext_R ^i(R/I^t,R)\to H_I^i(R)$ we first view $H_I^i(R)$ as \v{C}ech cohomology of the maximal minors and compare this to the Koszul cohomology of the powers of maximal minors in the usual way. We then examine the natural map from $\Ext_R^i(R/I^t,R)$ to this Koszul cohomology. As the map from Koszul cohomology to \v{C}ech cohomology is well understood, it remains to understand the map $\Ext_R ^i(R/I^t,R) \to H^i([d_1^t \ldots d_k^t];R)$ where $d_1,\cdots,d_k$ are the maximal minors of $X$. Thus, to explicitly describe $\Ext_R ^i(R/I^t,R) \to H^i([d_1^t \ldots d_k^t];R)$ we need to describe a map of complexes $\vp_t$ such that
 
\begin{equation*}
\begin{tikzcd}
F_\bullet \ar{r} &  I^t \ar{r} & 0  \\
K_{\bullet} ([d_1^t \ldots d_k^t];R) \ar{r} \ar{u}{\vp_t} & (d_1^t, \cdots,d_k^t) \ar[hookrightarrow]{u} \ar{r}& 0
\end{tikzcd}
\end{equation*}
commutes, where $F_\bullet$ is a free resolution of $I^t$. The utility of using this approach to study $\Ext_R ^i(R/I^t,R)$ is in that the module structure of $H_I^i(R)$ may be quite familiar, cf. \cite[Main Theorem]{Raicu-Weyman16LocalcohomologywithsupportinidealsofsymmetricminorsandPfaffians}. For example, for $i=mn-n^2+1$, the cohomological dimension of $I$, $H_I^i(R)\cong H_\maxm^{mn}(R)$.

In the case that $X$ is size $n\times (n-1)$ we are able to explicitly construct a map $\vp_t$ as above; this is the content of Section \ref{section-lift}. Using this lift, in Section \ref{section-n times n-1 full section }, we give the following description of $\Ext_R ^{n}(R/I^t,R)$ as a submodule of $H^{n}_I(R)$.

\begin{Th*}[\ref{Th-Ext gens}] Let $X$ be a $n\times (n-1)$ matrix of indeterminates and $R=\CC[X]$. Set $I=(d_1,\ldots,d_n)\subseteq R$ where $d_1,\ldots,d_n$ are the maximal minors of $X$. For a tuple $\alpha=(\alpha_1,\ldots,\alpha_n)\in \ZZ^n_{\geq 0}$ write $d^\alpha=d_1^{\alpha_1}\cdots d_n^{\alpha_n}$. Then $\Ext_R^n(R/I^t,R)$ embeds into $H_I^n(R)$ as the submodule generated by the classes
$$\left\{\frac{1}{\prod_{i=1}^n d_i} \cdot  \frac{1}{d^\alpha} \right\}_{|\alpha|=t-n+1}.$$ 
\end{Th*}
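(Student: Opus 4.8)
The plan is to realize the embedding $\Ext_R^n(R/I^t,R) \hookrightarrow H_I^n(R)$ concretely through the commutative diagram from the introduction, using the explicit lift $\vp_t$ constructed in Section \ref{section-lift}. Since $H_I^n(R)$ is computed as the cohomology of the \v{C}ech complex on the maximal minors $d_1,\ldots,d_n$ — and in this top cohomological degree it is the cokernel $\left(\bigoplus_i R_{d_1\cdots \widehat{d_i}\cdots d_n}\right) \to R_{d_1\cdots d_n}$ — a class in $H_I^n(R)$ is represented by an element of $R_{d_1\cdots d_n}$, i.e. by a $\CC$-linear combination of monomials $d^{-\beta}$ with $\beta \in \ZZ_{\geq 1}^n$. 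The comparison map from Koszul cohomology $H^n([d_1^t,\ldots,d_n^t];R)$ to \v{C}ech cohomology sends the Koszul class of an element $r \in R$ (placed in the top spot of the Koszul complex) to the class of $r/(d_1^t\cdots d_n^t)$. So the whole computation reduces to identifying which elements $r \in R$ arise as images under $\vp_t$ in top Koszul degree of cocycles representing classes in $\Ext_R^n(R/I^t,R)$.

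**Key steps.**

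First I would recall from Section \ref{section-lift} the precise form of the top-degree component of the map $\vp_t \colon K_\bullet([d_1^t,\ldots,d_n^t];R) \to F_\bullet$, where $F_\bullet$ is the (Eagon--Northcott type) free resolution of $R/I^t$; here $F_\bullet$ has length $n$ since $\projdim R/I^t = n$ in the $n \times (n-1)$ case. Dualizing into $R$ and taking cohomology, $\Ext_R^n(R/I^t,R) = \coker(F_{n-1}^* \to F_n^*)$, and the induced map on the dual Koszul complexes fits into
\begin{equation*}
\begin{tikzcd}
F_{n-1}^* \ar{r} \ar{d}{\vp_{t,n-1}^*} & F_n^* \ar{d}{\vp_{t,n}^*} \\
K_{n-1}^* \ar{r} & K_n^*
\end{tikzcd}
\end{equation*}
Chasing this, a generator of $\Ext_R^n(R/I^t,R)$ maps to the Koszul class of some explicit polynomial; the claim is that the generators $d^\alpha$ of $F_n^*$ (indexed suitably) map to the polynomial whose associated \v{C}ech class is $\frac{1}{d_1\cdots d_n}\cdot\frac{1}{d^\alpha}$ — equivalently, $\vp_{t,n}^*$ sends the relevant dual basis element to the element $1 \in R$ in the Koszul spot corresponding to the exponent vector $(1,\ldots,1)+\alpha$, after rewriting $d^{(t,\ldots,t)}/(d_1\cdots d_n \cdot d^\alpha) \in R$ using the straightening/Plücker relations. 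Then I would check (i) these classes lie in the image of $\Ext_R^n(R/I^t,R)$ by exhibiting explicit cocycle preimages in $F_n^*$, and (ii) they generate, by a rank/dimension count: the number of $\alpha$ with $|\alpha| = t-n+1$ matches the minimal number of generators of $\Ext_R^n(R/I^t,R)$ coming from the free module $F_n$, and the injectivity of $\Ext_R^n(R/I^t,R) \to H_I^n(R)$ (cited from \cite{Raicu-Weyman-Witt14Localcohomologywithsupportinidealsofmaximalminorsandsub-maximalPfaffians}) upgrades "generate the image" to "are exactly the images of a generating set."

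**The main obstacle.**

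The hard part will be the bookkeeping in the top-degree lift: showing that the polynomial $\vp_{t,n}$ (or its dual) really does produce the \v{C}ech representative $\frac{1}{d_1\cdots d_n}\cdot\frac{1}{d^\alpha}$ rather than something that only agrees with it modulo the image of the lower \v{C}ech terms. Concretely this means tracking how the Koszul differential, composed with $\vp_t$, interacts with the module of relations among the $d_i^t$ — one must verify that the ambiguity in choosing $\vp_t$ (it is only unique up to homotopy) does not change the resulting class, and that the exponent shift by $(1,\ldots,1)$ is forced rather than an artifact. I expect this to hinge on a degree argument: the generator of $\Ext_R^n(R/I^t,R)$ sits in a single well-understood internal degree (matching the socle/canonical-module degree of $R/I^t$), and pinning down that degree forces the extra factor $1/(d_1\cdots d_n)$. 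Once the top-degree identification is secured, verifying the module-generation statement is a comparatively routine combinatorial count of exponent vectors, and the embedding statement follows by combining with the known injectivity of the natural map.
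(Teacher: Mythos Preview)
Your approach is essentially the paper's: realize the embedding via the composite $\Ext_R^n(R/I^t,R) \xrightarrow{\psi_t} H^n(K^\bullet([d_1^t,\ldots,d_n^t];R)) \to H_I^n(R)$, with $\psi_t$ induced by dualizing the lift $\vp_t$ of Section~\ref{section-lift}. You are, however, anticipating obstacles that never materialize. The paper's argument is an immediate computation once the formula of Theorem~\ref{Th-full lift} is in hand: $\vp_t^{n-1}(e_{\{1,\ldots,n\}}) = \pm(\prod_i \Delta_i)^{n-2}\cdot(\prod_i \Delta_i)^{t-n+1}\, h_{t-n+1}(\{1,\ldots,n\})$, and expanding the complete homogeneous symmetric polynomial $h_{t-n+1}(T_1/\Delta_1,\ldots,T_n/\Delta_n)$ shows the coefficient of $T^\alpha$ is $\pm(\prod_i \Delta_i)^{t-1}/\Delta^\alpha$ --- already a polynomial since each exponent $t-1-\alpha_i \geq 0$, so no straightening or Pl\"ucker relations enter. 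Dualizing, the basis element indexed by $\alpha$ maps to this polynomial, and dividing by $(\prod_i \Delta_i)^t$ in \v{C}ech cohomology yields exactly $1/(\prod_i d_i \cdot d^\alpha)$. Since $\Ext_R^n(R/I^t,R)$ is a quotient of $F_n^*$, the images of a basis of $F_n^*$ automatically generate its image; no separate rank or dimension count is needed, and the homotopy ambiguity in the choice of $\vp_t$ is irrelevant for induced maps on cohomology.
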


This embedding can be realized as coming from differential operators and after identifying $H^n_I(R)$ with $H_\maxm^{n(n-1)}(R)$ we obtain the following key corollary.

\begin{Cor*}[\ref{prop-Ext in H_m}]
In the setting of the previous theorem, for $f\in R$, let $f^*$ denote the polynomial differential operator obtained from $f$ by replacing $x_i$ with $\partial_{i}$. Then for $t\geq n-1$ we have that $\Ext_R ^n(R/I^t,R)$ embeds in $H_\maxm^{n(n-1)}(R)$ as the $R$-submodule generated by the classes 
\[\left \{ (d^\alpha)^* \bullet \frac{1}{\underline{x}} \right\}_{|\alpha|=t-n+1}, \]
where $\bullet$ denotes the application of an operator.
\end{Cor*}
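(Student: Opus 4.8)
The plan is to obtain the corollary from Theorem~\ref{Th-Ext gens} by transporting its description of $\Ext^n_R(R/I^t,R)\subseteq H^n_I(R)$ along the explicit isomorphism $\Phi\colon H^n_I(R)\xrightarrow{\ \sim\ }H^{n(n-1)}_\maxm(R)$ of Section~\ref{section-n times n-1 full section}. Concretely I would: (i) re-express the generators of the image of $\Ext^n_R(R/I^t,R)$ in $H^n_I(R)$ as differential operators applied to the single class $\tfrac1{\prod_i d_i}$; (ii) show $\Phi$ is $\mathcal D_R$-linear and sends $\tfrac1{\prod_i d_i}$ to $\tfrac1{\underline x}$; (iii) combine.

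\emph{Step (i).} By Theorem~\ref{Th-Ext gens}, $\Ext^n_R(R/I^t,R)$ is the $R$-submodule of $H^n_I(R)$ generated by $\big\{\tfrac1{\prod_i d_i}\cdot\tfrac1{d^\alpha}\big\}_{|\alpha|=t-n+1}$. The key input is a Cayley-type identity
\begin{equation*}
(d^\alpha)^*\bullet\frac1{\prod_i d_i}\;=\;c_\alpha\cdot\frac1{\prod_i d_i}\cdot\frac1{d^\alpha}\qquad\text{in }H^n_I(R),\qquad c_\alpha\in\CC^\times .
\end{equation*}
Since $f\mapsto f^*$ is a ring homomorphism $\CC[X]\to\CC[\underline\partial]$, one has $(d^\alpha)^*=\prod_i\big((d_i)^*\big)^{\alpha_i}$, which reduces the identity to a single minor; one then computes in $H^n_I(R)=R_{d_1\cdots d_n}\big/\sum_k R_{d_1\cdots\widehat{d_k}\cdots d_n}$, differentiating the rational function $\tfrac1{d_1\cdots d_n}$ and simplifying modulo the \v{C}ech submodule using the cofactor relations $\sum_i(-1)^i d_i x_{ij}=0$ available for an $n\times(n-1)$ matrix. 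This identity is precisely the differential-operator realization of the embedding referred to in the text, and is exactly what the lift $\vp_t$ of Section~\ref{section-lift} produces on the top Koszul cohomology $H^n\big([d_1^s\cdots d_n^s];R\big)$ after passing to the direct limit. Granting it, the image of $\Ext^n_R(R/I^t,R)$ in $H^n_I(R)$ is the $R$-submodule generated by $\big\{(d^\alpha)^*\bullet\tfrac1{\prod_i d_i}\big\}_{|\alpha|=t-n+1}$.

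\emph{Step (ii).} Through $\Phi$, $H^n_I(R)\cong H^{n(n-1)}_\maxm(R)=E_R(\CC)$ has one-dimensional socle. The class $\tfrac1{\prod_i d_i}\in H^n_I(R)$ is nonzero (as recorded in the proof of Theorem~\ref{Th-Ext gens}) and is annihilated by every variable $x_{ij}$: the cofactor relation yields $x_{ij}=-d_i^{-1}\sum_{k\ne i}(-1)^{k-i}d_k x_{kj}$ in $R_{d_i}$, so $\tfrac{x_{ij}}{d_1\cdots d_n}$ is a sum of terms each lying in some $R_{d_1\cdots\widehat{d_k}\cdots d_n}$ and hence vanishes in $H^n_I(R)$. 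Therefore $\tfrac1{\prod_i d_i}$ spans $\operatorname{soc}\big(H^n_I(R)\big)$, so $\Phi\big(\tfrac1{\prod_i d_i}\big)$ spans $\operatorname{soc}\big(E_R(\CC)\big)=\CC\cdot\tfrac1{\underline x}$, and after rescaling $\Phi$ we may take $\Phi\big(\tfrac1{\prod_i d_i}\big)=\tfrac1{\underline x}$. Finally $\Phi$ is $\mathcal D_R$-linear: this is visible from its construction, and in any case $E_R(\CC)$ admits a unique $\mathcal D_R$-module structure compatible with its $R$-module structure (two candidate actions of $\partial_{ij}$ differ by a graded $R$-linear endomorphism of degree $-1$, which a downward induction on degree forces to be zero).

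\emph{Step (iii) and the main obstacle.} By $\mathcal D_R$-linearity, $\Phi\big((d^\alpha)^*\bullet\tfrac1{\prod_i d_i}\big)=(d^\alpha)^*\bullet\Phi\big(\tfrac1{\prod_i d_i}\big)=(d^\alpha)^*\bullet\tfrac1{\underline x}$; hence $\Phi$ carries the submodule of Step~(i) onto the $R$-submodule of $H^{n(n-1)}_\maxm(R)$ generated by $\big\{(d^\alpha)^*\bullet\tfrac1{\underline x}\big\}_{|\alpha|=t-n+1}$, and composing with the embedding of Theorem~\ref{Th-Ext gens} gives the claim. I expect Step~(i) — the Cayley-type identity, equivalently the explicit behaviour of $\vp_t$ (or the \v{C}ech comparison map) on top cohomology — to be the real obstacle; once it and the normalization of $\Phi$ are in hand, Steps~(ii)--(iii) are formal.
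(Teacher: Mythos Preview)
Your proposal is correct and matches the paper's approach: your Step~(i) is exactly Proposition~\ref{prop-Ext diff ops}, and the passage to $H^{n(n-1)}_\maxm(R)$ via the $\DD$-linear isomorphism determined by $\tfrac{1}{\prod_i d_i}\mapsto\tfrac{1}{\underline x}$ is precisely how the paper obtains Proposition~\ref{prop-Ext in H_m}. The one correction is your attribution of the Cayley-type identity: in the paper it is not extracted from the lift $\vp_t$ of Section~\ref{section-lift} (which is used only to prove Theorem~\ref{Th-Ext gens}) but is quoted as Proposition~\ref{prop-Cayley type} from \cite{Lorincz-Raicu-Walther-Weyman17Bernstein-Satopolynomialsformaximalminorsandsub-maximalPfaffians,Lorincz17Theb-functionsofquiversemi-invariants}, so the obstacle you anticipate does not arise.
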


%Stated less precisely Corollary \ref{prop-Ext in H_m} says that the $\Ext^n(R/I^t,R)$ filtration of $H_I^n(R)$ is, up to shift, the same as the filtration of $R^*=\CC[\{\partial_{ij}\}]$ 

The Weyl algebra annihilator of the class $\frac{1}{\underline{x}}\in H_\maxm^{n(n-1)}(R)$ is well understood and in the remainder of Section \ref{section-n times n-1 full section } we use Corollary \ref{prop-Ext in H_m} to compute the $R$-annihilator of $\Ext_R^n(R/I^t,R)$. By graded duality, the annihilator of $\Ext_R^n(R/I^t,R)$ is the annihilator of $H_\maxm^{(n-1)^2-1}(R/I^t)$, hence we obtain a complete description of $H_\maxm^{(n-1)^2-1}(R/I^t)$ when $X$ is size $n\times (n-1)$ as a cyclic $R$-module generated in degree zero, see Proposition \ref{prop-cyclic R module, general case}.

In the general case of maximal minors of an $m\times n$ matrix with $m>n+1$ the map of complexes, $\vp_t$, and with it the structure of the $\Ext$ modules, remains mysterious. However, by analyzing the $\Gl$-structure of $H_\maxm^{n^2-1}(R/I^t)$ we are able to compute its annihilator as follows:

\begin{Th*}[\ref{Th-J_t description nx(n-1)}, \ref{Th-J_t description}]
Let $X$ be a $m\times n$ matrix of indeterminates with $m>n$ and set $I=I_n(x)\subseteq R=\CC[X]$. If $t< n$ then $H_\maxm^{n^2-1}(R/I^t)=0$. If $t\geq n$, then we have an isomorphism of graded $R$-modules:
\[H_\maxm^{n^2-1}(R/I^t)\cong R/ I_\lambda,\]
where $I_\lambda$ is the $\Gl$-invariant ideal associated to the partition $\lambda=(t-n+1)$, i.e., the ideal generated by $\Gl_m\times \Gl_n$ orbit of $x_{1,1}^{t-n+1}$, i.e., the ideal of $t-n+1$ generalized permanents of $X$ c.f. \ref{Ex-Gl ideal}.
\end{Th*}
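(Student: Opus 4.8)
The plan is to reduce the computation of the annihilator of $H_\maxm^{n^2-1}(R/I^t)$ to a question about $\Gl$-invariant ideals, and then identify the relevant invariant ideal explicitly. First I would recall from Proposition \ref{prop-cyclic R module, general case} that, for $t \gg 0$ (in fact $t \geq n$), the module $H_\maxm^{n^2-1}(R/I^t)$ is cyclic and generated in degree zero; hence it is of the form $R/J_t$ for a single homogeneous ideal $J_t$, and the task is precisely to determine $J_t$. Since $R/I^t$ carries a natural $\Gl_m \times \Gl_n$ action and local cohomology is functorial, $H_\maxm^{n^2-1}(R/I^t)$ is a $\Gl_m \times \Gl_n$-equivariant module, so $J_t$ is a $\Gl$-invariant ideal of $R$. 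By the classification of $\Gl$-invariant ideals in $\CC[X]$ (each is a sum of ideals $I_\lambda$ associated to partitions $\lambda$, as in Example \ref{Ex-Gl ideal}), it then suffices to pin down which partition(s) occur; the cyclicity forces $J_t = I_\lambda$ for a single $\lambda$, and the degree-zero generator together with a Hilbert-series / dimension count should force $\lambda = (t-n+1)$.

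The key steps, in order, would be: (1) establish the vanishing $H_\maxm^{n^2-1}(R/I^t) = 0$ for $t < n$ — this should follow from the known depth/cohomological-index results of Li \cite{Li21Anoteonthemultiplicitiesofthedeterminantalthickeningsofmaximalminors}, since for small $t$ the first nonvanishing finite-length local cohomology sits in a different (or no) cohomological degree, or alternatively from the fact that $I^t$ has a linear enough resolution in that range; (2) for $t \geq n$, use graded local duality to convert $H_\maxm^{n^2-1}(R/I^t)$ into a dual of an appropriate $\Ext$-module, $\Ext_R^{mn - n^2 + 1}(R/I^t, R)$ up to a shift, and then feed in the $\Gl$-equivariant description of this $\Ext$-module from Raicu--Weyman--Witt \cite{Raicu-Weyman-Witt14Localcohomologywithsupportinidealsofmaximalminorsandsub-maximalPfaffians} and Raicu \cite{Raicu18Regularityandcohomologyofdeterminantalthickenings}; (3) read off, from that $\Gl$-character, both the socle/top structure (to see cyclicity, consistent with Proposition \ref{prop-cyclic R module, general case}) and the precise list of partitions appearing, identifying the annihilator ideal as the $\Gl$-invariant ideal $I_{(t-n+1)}$; (4) finally, match this against the combinatorial description of $I_{(t-n+1)}$ as the ideal of $(t-n+1)$-generalized permanents, i.e.\ the ideal generated by the $\Gl_m \times \Gl_n$-orbit of $x_{1,1}^{t-n+1}$.

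The main obstacle I expect is step (3): extracting, from the $\Gl$-equivariant (Schur-functor) decomposition of the relevant $\Ext$ or local cohomology module, exactly which $\Gl$-invariant ideal is the annihilator of the cyclic module $R/J_t$. The subtlety is that knowing $H_\maxm^{n^2-1}(R/I^t)$ as a $\Gl$-representation tells us its character but not immediately its $R$-module structure; one must exploit cyclicity (already in hand) to know $J_t$ is determined by which Schur functors $\mathbb{S}_\mu R$ it \emph{kills}, and then argue that the lowest-degree relation, occurring in degree $t-n+1$ and transforming in the $\Gl$-representation indexed by $(t-n+1)$ on each side, generates all of $J_t$ as a $\Gl$-invariant ideal — i.e.\ that no higher-degree invariant generators are needed. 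This last minimality claim is where I would need to be careful, likely comparing Hilbert functions of $R/I_{(t-n+1)}$ with the known (stabilized) graded character of $H_\maxm^{n^2-1}(R/I^t)$ coming from the Kenkel--Li length computations, or else invoking directly the classification result that a $\Gl$-invariant ideal with a single generating orbit in the minimal degree and no socle obstruction in higher degrees must equal $I_\lambda$ for that $\lambda$. The $n \times (n-1)$ case (Theorem \ref{Th-J_t description nx(n-1)}) serves as a consistency check, since there the annihilator has already been computed independently via the explicit lift $\vp_t$ and the Weyl-algebra annihilator of $\tfrac{1}{\underline{x}} \in H_\maxm^{n(n-1)}(R)$, and the two computations must agree.
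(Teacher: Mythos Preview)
Your overall route---graded local duality plus the Raicu--Weyman--Witt description of $\Ext_R^{mn-n^2+1}(R/I^t,R)$, then reading off the $\Gl$-invariant annihilator---is exactly what the paper does in Theorem~\ref{Th-J_t description}. But you overstate the difficulty of your step~(3), and the proposed workarounds (Hilbert-series comparisons, Kenkel--Li length formulas, minimality arguments about ``no higher-degree invariant generators'') are unnecessary. The observation you are missing is Remark~\ref{rmk-Gl-ideals determined by rep structure}: Cauchy's formula~\eqref{eq-Cauchy's formula} is \emph{multiplicity-free}, so a $\Gl$-invariant ideal of $R$ is uniquely determined by its $\Gl$-character. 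Once graded duality and Theorem~\ref{Th-RWW Ext graded pieces} hand you the decomposition
\[
[H_\maxm^{n^2-1}(R/I^t)]_r \;=\; \bigoplus_{\substack{\lambda\in\PP_n\\ \lambda_1\leq t-n,\ |\lambda|=r}} S_\lambda F\otimes S_\lambda G,
\]
the ideal $J_t$ is immediately the complementary sum of irreducibles in $R$, which by~\eqref{eq-ideal generated by single partition} is exactly $I_{(t-n+1)}$. No separate argument that one partition ``generates enough'' is required.

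One small correction: your claim that ``the cyclicity forces $J_t=I_\lambda$ for a single $\lambda$'' is not right---$R/J$ is cyclic for \emph{any} ideal $J$, so cyclicity imposes no constraint on the shape of $J_t$ as $I_\chi$. That $\chi$ turns out to be a single partition is an output of the computation, not an input.
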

%%%%%%

\section*{Acknowledgements}

I would like to thank Claudiu Raicu for his helpful comments and suggestions, particularly in regards to Section \ref{section-general case} and tracking $\Gl$-structure across graded local duality. I am grateful to Bernd Ulrich and Claudia Polini for the conversations on Rees algebras. My sincerest thanks go to my advisor Uli Walther for the many discussions and feedback provided. 

%%%%%%%%%%%%%%%%%%%%%%%%%%%%%%%%%%%%%%%%%%%%%%%%%%%%%%%%%%%%55555

\section{Background}

\begin{Not}
Let $R=\CC[x_1,\ldots,x_n]$ be a polynomial ring and $\DD=R[\partial_{1},\ldots,\partial_{n}]$ be the ring of differential operators on $R$. Fix $f\in R$ and $\psi\in \DD$. 
\begin{itemize}
    \item For a $\DD$-module $M$ and an element $h\in M$ we write $\psi \bullet h$ for element obtained by acting on $h$ by $\psi$. In particular $\psi \bullet f\in R$ is the application of $\psi$ to $f$.
    \item We write $\psi f \in \DD$ for the multiplication of $\psi$ and $f$ in $\DD$. 
    \item We write $f^*\in \DD$ for $f(\partial)$, the ``dual" operator to $f$ obtained by replacing $x_i$ by $\partial_{i}$.
\end{itemize}
Let $G$ be a group acting on a set $S$. Fix $g\in G$ and $s\in S$.
\begin{itemize}
    \item We write $g\cdot s$ for the element obtained by acting on $s$ with $g$.
    \item We write $G\cdot s$ for the orbit of $s$.
\end{itemize}
\end{Not}

\subsection{Dominant Weights, Partitions and Schur Functors}
We begin by establishing some notation and recalling some useful facts about Schur functors, for a complete treatment see \cite{Fulton-Harris91Representationtheory} and \cite{Weyman03Cohomologyofvectorbundlesandsyzygies}.  
A vector $\lambda=(\lambda_1,\ldots,\lambda_n)\in \ZZ^n$ is called a \textit{dominant weight} if $\lambda_1\geq \lambda_2 \geq \cdots \geq \lambda_n$. We write $\ZZ^n_{dom}$ for the set of all dominant weights in $\ZZ^n$ and write $|\lambda|=\sum_{i=1}^n \lambda_i$ for the \textit{size} of $\lambda$. Additionally, for $c\in\ZZ$ and $0 \leq d \leq n$, we write $(c^d)\in \ZZ^n_{dom}$ for the vector with $d$ nonzero components all equal to $c$.  A \textit{partition} into $n$ parts is a dominant weight, $\lambda\in\ZZ^n_{dom}$, with $\lambda_n \geq 0$, we write $\PP_n\subseteq \ZZ^n_{dom}$ to be the set of all such weights. An element $\lambda \in \PP_n$ may be realized as a \textit{Young diagram} with $\lambda_i$ boxes in row $i$, for example the diagram associated to $(4,3,1)\in \PP_3$ is:
\begin{equation}\label{yd-431}
   \ytableausetup{centertableaux}
\begin{ytableau}
 \phantom{d}& & & \phantom{d} \\
\phantom{d}  & &  \\
\phantom{d}
\end{ytableau}. 
\end{equation}

If $m\geq n$, we can naturally identify an element of $\PP_n$ with an element of $\PP_m$ by adjoining zeroes, e.g., $(2,2)\in \PP_2$ is identified with $(2,2,0,0)\in\PP_4$. Generally we omit the trailing zeroes and would write $(2,2)\in \PP_4$. For $\lambda \in \PP_n$ we can consider its \textit{transpose partition}, which is the partition associated to the transpose of the Young diagram of $\lambda$. The transpose partition of $(4,3,1)$ is $(3,2,2,1)$ because the transpose of \eqref{yd-431} is:
\begin{equation}
   \ytableausetup{centertableaux}
\begin{ytableau}
 \phantom{d}&  & \phantom{d} \\
\phantom{d}  &   \\
\phantom{d}  &   \\
\phantom{d}
\end{ytableau}. 
\end{equation}

Let $H$ be an $n$ dimensional $\CC$-vector space.  Then to each dominant weight $\lambda\in\ZZ^n_{dom}$ we associate an irreducible representation of $\Gl(H)$, denoted $S_\lambda H$, called a \textit{Schur functor}. Moreover every irreducible representation of $\Gl(H)$ can be realized in this manner. For some dominant weights, Schur functors are quite familiar: there are $\Gl(H)$-equivariant isomorphisms:

\[ S_{(1^d)} H\cong \bigwedge^d H  \]
and 
\[S_{(d)} H\cong \Sym^d H. \]

For computational purposes, frequently it is sufficient to consider $\lambda \in \PP_n$ as we have the following $\Gl(H)$-equivariant isomorphisms: 

\[S_{\lambda+(1^n)}H \cong S_\lambda H \bigotimes \bigwedge^n H \] and 
\[S_{(\lambda_1,\ldots, \lambda_n)}H \cong \Hom(S_{(-\lambda_n,\ldots,-\lambda_1)}H,\CC).\]
Lastly, the dimension of Schur functors is relatively easy to compute and is described by the following formula.

\begin{Prop}\cite{Fulton-Harris91Representationtheory}
Let $\lambda\in \ZZ^n_{dom}$, then
\[\dim_\CC S_\lambda \CC^n = \prod_{1\leq i < j \leq n} \frac{\lambda_i-\lambda_j+j-i }{j-i}. \]
 
\end{Prop}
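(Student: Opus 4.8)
The displayed identity is the Weyl dimension formula for $\Gl_n$; here is how I would prove it. First, twisting by a suitable power of $\bigwedge^n H$ (equivalently, replacing $\lambda$ by $\lambda+(k^n)$ for $k\gg 0$, which changes neither $\dim_\CC S_\lambda\CC^n$ nor the right-hand side, since all differences $\lambda_i-\lambda_j$ are unchanged), we may assume $\lambda\in\PP_n$ is a genuine partition. Then $s_\lambda$ is the character of $S_\lambda\CC^n$, so $\dim_\CC S_\lambda\CC^n=s_\lambda(1,1,\ldots,1)$, and the bialternant (Weyl character) formula gives
\[
s_\lambda(x_1,\ldots,x_n)=\frac{\det\!\big(x_i^{\ell_j}\big)_{1\le i,j\le n}}{\det\!\big(x_i^{n-j}\big)_{1\le i,j\le n}},\qquad \ell_j:=\lambda_j+n-j,
\]
where the $\ell_j$ are strictly decreasing because $\lambda$ is dominant, and the denominator is the Vandermonde determinant $\prod_{1\le i<k\le n}(x_i-x_k)$.

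The substitution $x_1=\cdots=x_n=1$ makes both determinants vanish, so I instead specialize along the curve $x_i=q^{i-1}$ and let $q\to 1$. Writing $w_j=q^{\ell_j}$, the numerator becomes $\det\!\big(w_j^{i-1}\big)_{1\le i,j\le n}=\prod_{1\le j<k\le n}(w_k-w_j)$, a Vandermonde determinant; the denominator is the same expression with $w_j$ replaced by $q^{n-j}$. Hence
\[
s_\lambda(1,q,\ldots,q^{n-1})=\prod_{1\le j<k\le n}\frac{q^{\ell_k}-q^{\ell_j}}{q^{n-k}-q^{n-j}}
=\prod_{1\le j<k\le n}q^{\lambda_k}\cdot\frac{1-q^{\ell_j-\ell_k}}{1-q^{k-j}},
\]
where I factored $q^{\ell_k}$ out of each numerator and $q^{n-k}$ out of each denominator and used $\ell_k-(n-k)=\lambda_k$; the exponents $\ell_j-\ell_k=\lambda_j-\lambda_k+k-j$ and $k-j$ are positive integers since $j<k$.

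Now let $q\to1$. The left-hand side is a polynomial in $q$, hence continuous, with value $s_\lambda(1,\ldots,1)=\dim_\CC S_\lambda\CC^n$ at $q=1$; on the right, $q^{\lambda_k}\to1$ and, for positive integers $a,b$, $\tfrac{1-q^a}{1-q^b}=\tfrac{1+q+\cdots+q^{a-1}}{1+q+\cdots+q^{b-1}}\to\tfrac{a}{b}$, so each factor tends to $\tfrac{\ell_j-\ell_k}{k-j}=\tfrac{\lambda_j-\lambda_k+k-j}{k-j}$. Taking the product over $1\le j<k\le n$ and relabelling $(j,k)$ as $(i,j)$ yields the claimed formula.

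The only delicate point is the $0/0$ indeterminacy at $x_1=\cdots=x_n=1$; specializing along $x_i=q^{i-1}$ so that numerator and denominator become the \emph{same} Vandermonde determinant (in the $q^{\ell_j}$ resp.\ $q^{n-j}$) lets the singular factors cancel pairwise, and the remainder is an elementary limit. Alternatively one may quote the Weyl dimension formula $\dim V_\lambda=\prod_{\alpha>0}\langle\lambda+\rho,\alpha\rangle/\langle\rho,\alpha\rangle$ for the reductive group $\Gl_n$: its positive roots are the $e_i-e_j$ with $i<j$, one may take $\rho=(n-1,n-2,\ldots,0)$, and then $\langle\lambda+\rho,e_i-e_j\rangle=\lambda_i-\lambda_j+j-i$ while $\langle\rho,e_i-e_j\rangle=j-i$, giving the formula directly.
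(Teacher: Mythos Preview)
Your proof is correct. The paper does not actually prove this proposition; it is stated as background with a citation to Fulton--Harris, so there is no ``paper's own proof'' to compare against. What you have written is the standard principal-specialization argument (essentially the one in Stanley's \emph{Enumerative Combinatorics} or in Fulton--Harris itself): reduce to partitions by twisting, use the bialternant formula, evaluate at $x_i=q^{i-1}$ so that both determinants become Vandermondes, and take $q\to1$. The alternative you mention at the end---quoting the Weyl dimension formula for $\Gl_n$ directly---is of course also valid and is closer in spirit to how the result is usually cited.
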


\subsection{$\Gl$-invariant ideals}\label{section-Gl invariant ideals}
Let $F=\CC^m$ and $G=\CC^n$ where $m\geq n$. Then 
\[R:=\Sym(F\otimes G)=\CC[\{x_{ij}\}]=\CC[X]\]
is a polynomial ring admitting a 
\[\Gl:= \Gl(F)\times \Gl(G)\]
action as follows: for $(g_1,g_2)\in \Gl$, $g\cdot (x_{ij})=(z_{ij})$ where $[z_{ij}]=g_1 X g_2^{-1}$. \textit{Cauchy's formula} describes the decomposition of $R$ into irreducible representations \cite{Weyman03Cohomologyofvectorbundlesandsyzygies}:
\begin{equation}\label{eq-Cauchy's formula}
    R=\bigoplus_{\lambda\in \PP_n} S_\lambda F \otimes S_\lambda G
\end{equation}
where  $S_\lambda F \otimes S_\lambda G$ lives in degree $|\lambda|$.

For a number $1\leq l \leq n$ set $\det_l:=\det(x_{ij})_{1\leq i,j\leq l}$ , i.e., the $l\times l$ minor in the top left corner of $X$. Then for a partition, $\lambda\in \PP_{n}$, with $n$ parts let $\lambda'$ be the transpose partition and define
\[{\det}_\lambda :=\prod_{i=1}^{\lambda_1} {\det}_{\lambda'_i}.\]
The $\CC$-linear span of the $\Gl$ orbit of $\det_\lambda$ is equal to $S_\lambda F \otimes S_\lambda G$. We set
\[I_\lambda := (\Gl \cdot \; {\det}_\lambda)\subseteq R,\] 
the ideal generated by the $\Gl$ orbit of $\det_\lambda$. This ideal is $\Gl$-invariant. We endow $\PP_n$ with a partial ordering: for $\mu,\lambda\in \PP_n$, we say that $\mu \geq \lambda$ if $\mu_i \geq \lambda_i$ for all $i$. It was shown in \cite{deConcini-Eisenbud-Procesi80Youngdiagramsanddeterminantalvarieties} that:
\begin{equation}\label{eq-ideal generated by single partition}
 I_\lambda=\bigoplus_{\mu\geq \lambda} S_\mu F \otimes S_\mu G.   
\end{equation}

 By taking a collection of partitions $\chi\subseteq \PP_{n}$ we can form the $\Gl$-invariant ideal $I_\chi=\sum_{\lambda\in \chi} I_\lambda$. It was proven in \cite{deConcini-Eisenbud-Procesi80Youngdiagramsanddeterminantalvarieties} that all $\Gl$-invariant ideals may be realized in this manner for some finite subset $\chi\subseteq \PP_{n}$ and so more generally $\Gl$-invariant ideals decompose as:
 \[I_\chi=\bigoplus_{\substack{\mu\geq \lambda \\ \lambda \in \chi}} S_\mu F \otimes S_\mu G.\]

 \begin{Ex}\label{Ex-Gl ideal} Let $r,t$ be positive integers.
 \begin{enumerate}
     \item[(1)] $I_{(1^r)}=I_r(X)$ the ideal of $r\times r$-minors of $X$.
     \item[(2)] $I_{\chi_t} =I_r(X)^t$ where $\chi_t=\{\lambda\in \PP_n |\; |\lambda|=rt, \lambda_1\leq t \}.$
     \item[(2$^\prime$)] $I_{(t^n)}=I_n(X)^t$ the $t$-th power of the ideal of maximal minors of $X$.
     \item[(3)] $I_{(t)}$, the ideal of $t\times t$ \textit{generalized permanents} of $X$, i.e., the ideal generated by the permanent of all $t\times t$ matrices of the form $[x_{\alpha_i, \beta_j}]$ where $\alpha_i \leq \alpha_{i+1}$ and $\beta_j \leq \beta_{j+1}$.
 \end{enumerate}
 \end{Ex}

 \begin{Rmk}\label{rmk-Gl-ideals determined by rep structure}
 Notice that Cauchy's Formula, \eqref{eq-Cauchy's formula}, says that every irreducible representation of $\Gl$ contained in $R$ appears at most once. Combining this with the classification of $\Gl$-invariant ideals of \cite{deConcini-Eisenbud-Procesi80Youngdiagramsanddeterminantalvarieties} stated above, we have that a $\Gl$-invariant ideal is uniquely determined by its structure as a $\Gl$-representation.  
 \end{Rmk}

\subsection{A $\CC$-Linear $\Gl$-Equivariant Pairing}\label{section-equivariant pairing}
Let $R$ be as above. Let $R^*=\CC[\partial_{ij}]$ and 
\[(-)^*: R \to R^*\] be the map induced by $x_{ij}\mapsto \partial_{ij}$. We can view $R$ as the coordinate ring for the space of $m\times n$ complex matrices and the $\Gl$ action on $R$ described above as being induced by the $\Gl$ action on this space of matrices. We now view $R^*$ as the coordinate ring for the space of $n \times m$ matrices and hence $\Gl$ acts on it as follows: for $g=(g_1,g_2)\in \Gl $, $g\cdot x_{ij}^* =z_{ij}$ where $[z_{ij}]=(g_1^{-1})^T X^* g_2^T $.\\

The action of $R^*$ on $R$ via differentiation induces a perfect pairing
\[\langle \;,\; \rangle :[R^*]_k\times [R]_k \to \CC.\] 
We will see below this pairing is $\Gl$-equivariant, where $\Gl$ acts on $R$ and $R^*$ as above and fixes $\CC$. A more general statement about differential operators acting on representations is known, see \cite[Section 2.2]{Lorincz-Raicu-Walther-Weyman17Bernstein-Satopolynomialsformaximalminorsandsub-maximalPfaffians}, however we include the following proof for completeness.

\begin{Lemma} The pairing, $\langle \;,\; \rangle :[R^*]_k\times [R]_k \to \CC$ is $\Gl$-equivariant.
\end{Lemma}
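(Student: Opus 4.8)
The plan is to deduce equivariance of $\langle\;,\;\rangle$ from the more basic fact that the differential action $R^*\times R\to R$, $(\psi,f)\mapsto \psi\bullet f$, is itself $\Gl$-equivariant, and then to reduce that fact to a one-line check on linear forms. First I would record that it suffices to prove
\[
g\cdot(\psi\bullet f)=(g\cdot\psi)\bullet(g\cdot f)\qquad\text{for all }g\in\Gl,\ \psi\in R^*,\ f\in R,
\]
where on the left $g\cdot(-)$ is the action on $R$. Granting this, when $\psi\in[R^*]_k$ and $f\in[R]_k$ the product $\psi\bullet f$ lies in $[R]_0=\CC$, on which $\Gl$ acts trivially, so $\langle\psi,f\rangle=\psi\bullet f=g\cdot(\psi\bullet f)=(g\cdot\psi)\bullet(g\cdot f)=\langle g\cdot\psi,g\cdot f\rangle$, which is the claim.

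Next I would strip the displayed identity down to linear arguments. Fix $g$. Since $R^*=\CC[\partial_{ij}]$ acts on $R$ by commuting derivations with $(\psi_1\psi_2)\bullet f=\psi_1\bullet(\psi_2\bullet f)$, and since $g\cdot(-)$ is a ring automorphism of both $R$ and $R^*$, the identity is preserved under taking products and $\CC$-linear combinations in the $\psi$-slot; hence it is enough to treat $\psi=\partial_{ij}$. For such $\psi$, the operator $f\mapsto g^{-1}\cdot\bigl((g\cdot\partial_{ij})\bullet(g\cdot f)\bigr)$ is a $\CC$-derivation of $R$: it is the composite (ring automorphism)$\,\circ\,$(derivation)$\,\circ\,$(inverse ring automorphism), and such composites are again derivations. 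As $f\mapsto\partial_{ij}\bullet f$ is also a derivation, the two agree as soon as they agree on the algebra generators $x_{kl}$. Thus the whole statement reduces to the degree-one case: $(g\cdot\partial_{ij})\bullet(g\cdot x_{kl})=\partial_{ij}\bullet x_{kl}=\delta_{ik}\delta_{jl}$.

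Finally I would carry out the base computation. Reading the matrix identities $[g\cdot x_{kl}]=g_1Xg_2^{-1}$ and $[g\cdot\partial_{ij}]=(g_1^{-1})^TX^*g_2^T$ entrywise gives $g\cdot x_{kl}=\sum_{r,s}(g_1)_{kr}(g_2^{-1})_{sl}x_{rs}$ and $g\cdot\partial_{ij}=\sum_{p,q}(g_1^{-1})_{pi}(g_2)_{jq}\partial_{pq}$; applying one to the other and using $\partial_{pq}\bullet x_{rs}=\delta_{pr}\delta_{qs}$ collapses the sum to $[g_1g_1^{-1}]_{ki}\,[g_2g_2^{-1}]_{jl}=\delta_{ik}\delta_{jl}$, as required. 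The one genuinely delicate point is the middle reduction: although $f\mapsto g\cdot(\partial_{ij}\bullet f)$ is not a derivation, conjugating the operator $(g\cdot\partial_{ij})\bullet(-)$ by the automorphism $g\cdot(-)$ \emph{does} produce one, and it is this observation that licenses checking only on the generators $x_{kl}$. Everything after that is routine bookkeeping of transposes and inverses, which is the only place a careless slip would derail the computation.
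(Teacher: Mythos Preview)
Your proof is correct and follows the same underlying idea as the paper---reduce to the degree-one case via the Leibniz rule---but you organize it more cleanly. The paper inducts directly on the degree $k$ of the pairing, writing $f=x_{ab}f'$ and $g=x_{cd}g'$ and unwinding with the product rule; in doing so it implicitly uses (and proves) the stronger equivariance of the full action $R^*\times R\to R$, since the inductive step needs $\theta\cdot x_{ab}^*\bullet(\theta\cdot g')=\theta\cdot(x_{ab}^*\bullet g')$ for arguments of unequal degree. You make this stronger statement explicit from the start, then separate the two reductions: multiplicativity in $\psi$ handles products of $\partial$'s, and the observation that conjugating a derivation by a ring automorphism yields a derivation handles products in $f$. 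This avoids the paper's somewhat tangled eight-line chain and also treats both factors of $\Gl$ simultaneously rather than one at a time.
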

\begin{proof}
By linearity it is enough to check this for $f^*,g$ where $f=x^\alpha$ and $g=x^\beta$ are monomials. Let $\theta=(\theta_1,\theta_2)\in \Gl$, it is sufficient to prove the statement for $\theta =(\theta_1, ID)$ and $\theta=(ID,\theta_2)$, where $ID$ denotes the identity. The arguments in each case are analogous so we assume that $ \theta =(\theta_1,ID)$.
Thus, $\theta\cdot x_{ij}=\sum_{k=1}^n x_{ik}\theta_{kj} $ and $\theta\cdot x_{ij}^*=\sum_{k=1}^n x_{ik}^*(\theta^{-1})^T_{kj}= \sum_{k=1}^n x_{ik}^*\theta^{-1}_{jk}$.\\
We induct on $k$. For $k=1$ we need to consider $f=x_{ab}$, $g=x_{cd}$. Then $\langle f^*,g\rangle$ is $1$ if $(a,b)=(c,d)$ and $0$ otherwise. Now consider, 
\begin{align*}
  \langle \theta \cdot f^*, \theta \cdot g\rangle &= (\sum_{k=1}^n x_{ak}^*\theta^{-1}_{bk} ) \bullet (\sum_{k=1}^n  x_{ck}\theta_{kd}) \\
  &=\sum_{k=1}^n \sum_{l=1}^n \theta_{bk}^{-1} \theta_{ld}(x_{ak}^* \bullet x_{cl}) \\
  &=\begin{cases} 
      0 & a\neq c \\
      \sum_{k=1}^n  \theta_{bk}^{-1} \theta_{kd} &  \text{else} 
   \end{cases}\\
   &=\begin{cases} 
      0 & a\neq c \\
      ID_{b,d} &  \text{else} 
   \end{cases}\\
   &=\begin{cases} 
      1 & (a,b) = (c,d) \\
      0 &  \text{else} 
   \end{cases}.
\end{align*}
For $k>1$ we may write $f=x_{ab}f'$ and $g=x_{cd}g'$, then :
\begin{align}
    (\theta\cdot f^*) \bullet (\theta \cdot g)&=(\theta \cdot f'^*) \bullet (\theta \cdot x_{ab}^* \bullet (\theta \cdot g')(\theta \cdot x_{cd})) \label{is-1}\\
    &=(\theta \cdot f'^*) \bullet ((\theta \cdot x_{cd})(\theta \cdot  x_{ab}^* \bullet (\theta \cdot g'))+ (\theta \cdot g')(\theta \cdot  x_{ab}^* \bullet (\theta \cdot x_{cd}))) \label{is-2}\\
    &=(\theta \cdot f'^*) \bullet ((\theta \cdot x_{cd})(\theta \cdot  (x_{ab}^* \bullet g'))+ (\theta \cdot g')(\theta \cdot  (x_{ab}^* \bullet  x_{cd}))  \label{is-3}\\
     &=(\theta \cdot f'^*) \bullet (\theta \cdot (x_{cd}(x_{ab}^* \bullet g'))+ \theta \cdot( g'(\theta \cdot  (x_{ab}^* \bullet  x_{cd})))) \label{is-4}\\
     &=(\theta \cdot f'^*) \bullet (\theta \cdot (x_{cd}(x_{ab}^* \bullet g')+  g'(\theta \cdot  (x_{ab}^* \bullet  x_{cd}))) \label{is-5}\\
     &=\theta \cdot (f'^* \bullet  ( (x_{cd}(x_{ab}^* \bullet g')+  g'(\theta \cdot  (x_{ab}^* \bullet  x_{cd}))))) \label{is-6}\\
     &=\theta \cdot (f'^* \bullet  ( x_{ab}^* \bullet (x_{cd}g'))) \label{is-7}\\
     &=\theta \cdot (f^* \bullet g) , \label{is-8}
\end{align}
where \eqref{is-1} to \eqref{is-2} is by the product rule since $\theta\cdot x_{ab}^*$ is a linear operator. From \eqref{is-2} to \eqref{is-3} and \eqref{is-5} to \eqref{is-6} are by induction hypothesis. Finally \eqref{is-6} to \eqref{is-7} is again by the product rule.

\end{proof}

Now given an $\Gl$-invariant subspace of $[R]_k$, e.g., a graded component of a $\Gl$-invariant ideal, we can use this pairing to analyze which polynomial operators annihilate that subspace.

\begin{Lemma}\label{lemma-orbit kils equivariant subspace}
Let $N\subseteq[R]_k$ be a $\Gl$-invariant subspace and suppose $f\in [R]_j$ such that $f^* \bullet N=0$. Then the $\Gl$ orbit of $f$ also annihilates $N$. 
\end{Lemma}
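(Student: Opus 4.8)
The plan is to reduce the statement to the $\Gl$-equivariance of the pairing $\langle\,,\,\rangle$ proved above, together with a (twisted) equivariance of the map $(-)^*$. Fixing $g=(g_1,g_2)\in\Gl$ and $h\in N$, the goal is to show $(g\cdot f)^*\bullet h=0$; one may assume $j\le k$, since otherwise this element lies in $[R]_{k-j}=0$ trivially. Because $(g\cdot f)^*\bullet h\in[R]_{k-j}$ and the pairing $[R^*]_{k-j}\times[R]_{k-j}\to\CC$ is perfect, it is enough to show that $\langle q^*,(g\cdot f)^*\bullet h\rangle=0$ for every $q\in[R]_{k-j}$ (as $q^*$ ranges over all of $[R^*]_{k-j}$).

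The key lemma I would first establish is a twisted equivariance of $(-)^*\colon R\to R^*$: if $\tau$ denotes the automorphism of $\Gl$ sending $(g_1,g_2)$ to $\bigl((g_1^{-1})^T,(g_2^{-1})^T\bigr)$, then $(g\cdot p)^*=\tau(g)\cdot(p^*)$ for all $p\in R$. Since $p\mapsto(g\cdot p)^*$ and $p\mapsto\tau(g)\cdot(p^*)$ are both $\CC$-algebra homomorphisms $R\to R^*$ (each a composite of a ring automorphism with the ring isomorphism $(-)^*$), it suffices to check the identity on the generators $x_{ij}$, where it reduces to a short matrix computation: both sides come out to the $(i,j)$-entry of $g_1X^*g_2^{-1}$, after substituting $\tau(g)$ into the defining formula for the $\Gl$-action on $R^*$. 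I would also record that $\tau$ is a genuine automorphism of $\Gl$, so that $\tau(g)^{-1}\in\Gl$ and $N$ stays invariant under it; this is the only point at which the hypothesis that $N$ is invariant under all of $\Gl$ (and not merely under some subgroup) is used.

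Granting the lemma, the rest is a formal chase. Using that $(-)^*$ is a ring map, $\langle q^*,(g\cdot f)^*\bullet h\rangle=(q\,(g\cdot f))^*\bullet h$; writing $q=g\cdot(g^{-1}\cdot q)$ and using that $g$ acts on $R$ by a ring automorphism gives $q\,(g\cdot f)=g\cdot\bigl((g^{-1}\cdot q)\,f\bigr)$, so the twisted equivariance turns the expression into $\bigl\langle\tau(g)\cdot\bigl((g^{-1}\cdot q)\,f\bigr)^*,\,h\bigr\rangle$. Invariance of the pairing rewrites this as $\bigl\langle\bigl((g^{-1}\cdot q)\,f\bigr)^*,\,\tau(g)^{-1}\cdot h\bigr\rangle=(g^{-1}\cdot q)^*\bullet\bigl(f^*\bullet(\tau(g)^{-1}\cdot h)\bigr)$, which vanishes because $\tau(g)^{-1}\cdot h\in N$ and $f^*\bullet N=0$. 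Perfectness of the pairing then yields $(g\cdot f)^*\bullet h=0$, as desired. I expect the only real obstacle to be the twisted-equivariance lemma — getting the twist $\tau$ right and verifying that $(-)^*$ intertwines the two $\Gl$-actions up to it; the surrounding argument is merely bookkeeping with the pairing, the ring structure of $(-)^*$, and the $\Gl$-invariance of $N$.
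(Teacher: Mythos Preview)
Your proof is correct and follows essentially the same route as the paper: both arguments reduce the claim to the $\Gl$-equivariance and perfectness of the pairing $\langle\,,\,\rangle$ together with the $\Gl$-invariance of $N$. Your explicit identification of the twist $\tau$ and the identity $(g\cdot p)^*=\tau(g)\cdot p^*$ makes precise a step the paper treats somewhat informally when it passes from $\varphi\cdot(fg)^*$ to $(\varphi\cdot f)^*(\varphi\cdot g)^*$.
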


\begin{proof}
If $j>k$ the statement is trivial so assume $j\leq k$.
Since $f^*\bullet N=0$ for all $g^*\in [R^*]_{k-j}$ and $h\in N$ we have that 
$\langle (fg)^*,h\rangle=0$. So for all $\vp \in \Gl$ we have that 
\[0=\langle \vp \cdot (fg)^*, \vp \cdot h\rangle=\langle (\vp \cdot f)^* (\vp \cdot g)^*,\cdot h\rangle.\]
Since the choice of $g$ was arbitrary and $N$ is $\Gl$-equivariant we have that for all $g^*\in [R^*]_{k-j}$ and all $h\in N$
\[\langle (\vp\cdot f)^* g^*,h\rangle=0.\]
Suppose for contradiction that $(\vp\cdot f)^* \bullet N\neq 0$, then there exists $h\in N$ such that $(\vp \cdot f)^* \bullet h \neq 0$. Thus, $(\vp \cdot f)^* \bullet h$ contains a monomial $g$ with nonzero coefficient, but then $\langle (\vp\cdot f)^* g^*,h\rangle \neq 0$, a contradiction.
\end{proof}

\subsection{$\Gl$-equivariant description of certain $Ext$ modules}\label{section-Gl equivariant description of Ext}
Let $R,F,G$ and $\Gl$ be as defined above in Section \ref{section-Gl invariant ideals}. Let $I=I_{(1^n)}$ be the ideal generated by the maximal minors of $X$. In  \cite{Raicu-Weyman-Witt14Localcohomologywithsupportinidealsofmaximalminorsandsub-maximalPfaffians} the authors gave a $\Gl$-equivariant description of $H_I^i(R)$ as a direct sum of irreducible $\Gl$-representations. They also prove a number of results about the modules $\Ext_R ^i(R/I^t,R)$, we recall two of these results below.
\begin{Th}\label{Th-RWW Ext graded pieces}\cite[Theorem 4.3]{Raicu-Weyman-Witt14Localcohomologywithsupportinidealsofmaximalminorsandsub-maximalPfaffians}
Let $m>n$ and $t\geq n$. 
If $\lambda=(\lambda_1,\ldots, \lambda_{n})\in \ZZ^{n}$ we write 
\[ \lambda(s)=(\lambda_1, \ldots, \lambda_{n-s},\underbrace{ -s,\ldots,-s}_{m-n}, \lambda_{n-s+1}+(m-n),\ldots,\lambda_{n} +(m-n))\in \ZZ^m\]
Writing $W(r;s)$ for the set of dominant weights $\lambda \in \ZZ_{dom}^{n}$ with $|\lambda|=r$ such that $\lambda(s)$ is also dominant. We have
\[[\Ext_R ^{s(m-n)}(I^t,R)]_{r} =\bigoplus_{\substack{\lambda\in W(r;s)\\ \lambda_{n} \geq -t-(m-n)}} S_{\lambda(s)} F \otimes S_\lambda G.\]

\end{Th}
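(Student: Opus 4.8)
The plan is to deduce Theorem~\ref{Th-RWW Ext graded pieces} from the geometric method of Kempf--Lascoux--Weyman together with relative duality, in the spirit of \cite{Weyman03Cohomologyofvectorbundlesandsyzygies}. Write $\mathcal Z\subseteq\Spec R=\Hom(G^{*},F)$ for the variety of matrices of rank $\le n-1$, i.e.\ the maps $\varphi\colon G^{*}\to F$ whose image lies in some $(n-1)$-dimensional subspace of $F$. This description gives the Kempf desingularization
\[
Z=\{(U,\varphi):U\in\operatorname{Gr}(n-1,F),\ \im\varphi\subseteq U\}\ \xrightarrow{\ q\ }\ \mathcal Z,
\]
where $Z\to\operatorname{Gr}(n-1,F)$ realizes $Z$ as the total space of the $\Gl$-equivariant bundle $\Hom(G^{*},\mathcal R)\cong G\otimes\mathcal R$, with $\mathcal R$ the tautological rank-$(n-1)$ subbundle (a suitable partial flag refinement may be needed for the finer bookkeeping). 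It is classical that $\mathcal Z$ has rational singularities and that the powers $I^{t}$ are integrally closed; hence $I\cdot\mathcal O_{Z}$ is an invertible ideal sheaf $\mathcal O_{Z}(-E)$ with $q_{*}\mathcal O_{Z}(-tE)=I^{t}$ and $R^{i}q_{*}\mathcal O_{Z}(-tE)=0$ for $i>0$. Since $R$ is Gorenstein, relative Grothendieck duality for the proper map $q$ then gives a $\Gl$-equivariant identification
\[
\mathbf R\Hom_{R}(I^{t},R)\ \simeq\ \mathbf R q_{*}\bigl(\omega_{Z}\otimes\mathcal O_{Z}(tE)\bigr)
\]
up to cohomological and internal degree shifts, reducing everything to sheaf cohomology on $Z$.

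Next I would push the affine-bundle direction of $q$ down to the Grassmannian, so that $\mathbf R q_{*}(\omega_{Z}\otimes\mathcal O_{Z}(tE))$ is computed by $\mathbf R\Gamma(\operatorname{Gr}(n-1,F),-)$ of a direct sum of $\Gl$-equivariant bundles of the form $S_{a}(\mathcal R)\otimes S_{b}(F/\mathcal R)\otimes S_{c}(G)$, the indices $a,b,c$ ranging over an explicit finite set. The point is that the twist by $\mathcal O_{Z}(tE)$ introduces only Schur twists of degree bounded linearly in $t$ --- which is exactly what produces the inequality $\lambda_{n}\ge -t-(m-n)$ --- while the dualizing twist $\omega_{Z}$ is responsible for the block shape of $\lambda(s)$ on the $F$-side. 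I would then invoke Bott's theorem: each equivariant summand has cohomology that vanishes or is a single irreducible, and chasing the $\operatorname{Gr}(n-1,F)$-weights through the Bott algorithm (insert $\rho$, reorder, subtract $\rho$) shows that the surviving contributions are precisely the $S_{\lambda(s)}F\otimes S_{\lambda}G$ with $\lambda(s)$ dominant --- ``$\lambda(s)$ dominant'' being exactly the no-cancellation condition --- each sitting in the single cohomological degree equal to the number of transpositions used to straighten its weight, which one checks equals $s(m-n)$. The requirement $|\lambda|=r$ is then just the internal $\ZZ$-grading, since a summand $S_{\mu}F\otimes S_{\nu}G$ always lives in internal degree $|\nu|$.

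The real difficulty is the combinatorial bookkeeping rather than any single conceptual step. One has to (i) make the pushforward of $\omega_{Z}\otimes\mathcal O_{Z}(tE)$ completely explicit, including the precise $t$-dependent range, which means pinning down $\mathcal O_{Z}(E)$ and carrying out a Koszul-type computation; (ii) run Bott's algorithm uniformly enough to see that the nonzero terms organize into exactly the family indexed by pairs $(s,\lambda)$ with $\lambda(s)$ dominant, with distinct pairs giving non-isomorphic representations; and (iii) check that the hypercohomology spectral sequence of the geometric technique degenerates, so that the cohomology of a direct sum genuinely computes the individual $\Ext$ modules --- this follows from the Cohen--Macaulayness of the determinantal ring together with the single-degree concentration provided by Bott, but it must be verified. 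Should (iii) prove awkward, an alternative is induction on $t$: the base case $t=1$ is the Eagon--Northcott resolution of $R/I$; the natural maps $\Ext_{R}^{i}(R/I^{t},R)\hookrightarrow H_{I}^{i}(R)$ are injective with target already known from \cite{Raicu-Weyman-Witt14Localcohomologywithsupportinidealsofmaximalminorsandsub-maximalPfaffians}; and the long exact sequences attached to $0\to I^{t}/I^{t+1}\to R/I^{t+1}\to R/I^{t}\to 0$ reduce the inductive step to computing $\Ext_{R}^{\bullet}(I^{t}/I^{t+1},R)$, i.e.\ to understanding $\gr_{I}R$ as a $\Gl$-module together with its minimal resolution --- so the obstacle merely migrates to that computation.
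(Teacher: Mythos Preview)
The paper does not prove this statement at all: Theorem~\ref{Th-RWW Ext graded pieces} is quoted verbatim from \cite[Theorem~4.3]{Raicu-Weyman-Witt14Localcohomologywithsupportinidealsofmaximalminorsandsub-maximalPfaffians} as a background result, with no argument supplied. So there is nothing in the present paper to compare your proposal against.

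That said, your sketch is broadly in the spirit of what Raicu--Weyman--Witt actually do in the cited reference: they set up the Kempf desingularization over the Grassmannian, push the problem down to sheaf cohomology of equivariant bundles, and invoke Bott's theorem to read off the irreducible $\Gl$-constituents and their cohomological degrees. Your account of why the condition ``$\lambda(s)$ dominant'' arises (as the Bott no-cancellation condition) and why the cohomological degree is $s(m-n)$ (the length of the straightening permutation) is the right intuition. The honest caveats you flag --- making $\mathcal O_Z(E)$ and the pushforward explicit, and checking degeneration of the relevant spectral sequence --- are exactly where the work lies in \cite{Raicu-Weyman-Witt14Localcohomologywithsupportinidealsofmaximalminorsandsub-maximalPfaffians}, and your outline does not actually carry them out; in that sense what you have written is a plausible roadmap rather than a proof. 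Your proposed alternative via induction on $t$ and the short exact sequences $0\to I^t/I^{t+1}\to R/I^{t+1}\to R/I^t\to 0$ is circular as stated, since you invoke the injectivity of $\Ext_R^i(R/I^t,R)\hookrightarrow H_I^i(R)$ from the same paper, and that injectivity is itself deduced there from the explicit $\Gl$-decomposition you are trying to establish.
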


An analogous description of $\Ext_R ^i(J,R)$ was computed in \cite{Raicu18Regularityandcohomologyofdeterminantalthickenings} for any $\Gl$-invariant ideal $J$ not just powers of maximal minors.
 
 We will use Theorem \ref{Th-RWW Ext graded pieces} in Section \ref{section-general case} in conjunction with graded duality to compute the $\Gl$-structure of $H_\maxm^{mn-n^2+1}(R/I^t)$. To make use of this description it will be useful to understand how the modules $Ext_R^i (R/I^t,R)$ sit inside $H_I^i(R)$.

\begin{Th}\label{Th-RWW injectivity of ext}\cite[Section 4]{Raicu-Weyman-Witt14Localcohomologywithsupportinidealsofmaximalminorsandsub-maximalPfaffians} Let $i\in \ZZ_{\geq 0}$, for all $t\geq 1$, the induced maps $\Ext_R^i (R/I^t,R)\to \Ext_R^i (R/I^{t+1},R)$ are injective.
\end{Th}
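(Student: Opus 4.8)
The statement to prove is Theorem~\ref{Th-RWW injectivity of ext}: the maps $\Ext_R^i(R/I^t,R)\to\Ext_R^i(R/I^{t+1},R)$ induced by the surjection $R/I^{t+1}\twoheadrightarrow R/I^t$ are injective. The plan is to reduce the question to the filtration $\bigcup_t \Ext_R^i(R/I^t,R)=H_I^i(R)$ and then use the explicit $\Gl$-equivariant descriptions of both sides. First I would record the general homological setup: the short exact sequence $0\to I^t/I^{t+1}\to R/I^{t+1}\to R/I^t\to 0$ gives a long exact sequence of $\Ext$'s, and taking the direct limit over $t$ of the natural maps $\Ext_R^i(R/I^t,R)\to H_I^i(R)$ exhibits $H_I^i(R)$ as $\varinjlim_t \Ext_R^i(R/I^t,R)$ (since $I$ can be generated up to radical by the maximal minors and $\{I^t\}$ is cofinal among powers). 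So it suffices to show each transition map in this direct system is injective; equivalently, that the natural map $\Ext_R^i(R/I^t,R)\to H_I^i(R)$ is injective for every $t$.

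The key tool is Remark~\ref{rmk-Gl-ideals determined by rep structure} together with the Cauchy-type multiplicity-one phenomenon: all the modules in sight ($H_I^i(R)$ and the $\Ext_R^i(R/I^t,R)$) are $\Gl$-equivariant, and by the $\Gl$-equivariant computations of \cite{Raicu-Weyman-Witt14Localcohomologywithsupportinidealsofmaximalminorsandsub-maximalPfaffians} (recalled as Theorem~\ref{Th-RWW Ext graded pieces} here, phrased for $I^t$ rather than $R/I^t$, with the appropriate degree shift) each decomposes as a direct sum of pairwise non-isomorphic irreducibles $S_{\mu(s)}F\otimes S_\mu G$, each with multiplicity at most one. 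A $\Gl$-equivariant map between two such multiplicity-free modules is, by Schur's lemma, determined componentwise: on each isotypic piece it is either zero or an isomorphism. Hence to prove injectivity of $\Ext_R^i(R/I^t,R)\to\Ext_R^i(R/I^{t+1},R)$ it is enough to check that no irreducible constituent of the source is killed, i.e., that every irreducible appearing in $\Ext_R^i(R/I^t,R)$ also appears in $\Ext_R^i(R/I^{t+1},R)$ \emph{and} that the induced map is nonzero on that piece. The containment of constituents is immediate from Theorem~\ref{Th-RWW Ext graded pieces}: the indexing set $W(r;s)$ is independent of $t$ and the only $t$-dependent condition is $\lambda_n\ge -t-(m-n)$, which only loosens as $t$ grows, so the constituent set for $t$ sits inside that for $t+1$.

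The remaining point — and the one I expect to be the main obstacle — is ruling out that the $\Gl$-equivariant map vanishes on some isotypic component. Abstract multiplicity-one arguments guarantee the map is block-diagonal but not that the blocks are nonzero. To handle this I would argue via the direct limit: the composite $\Ext_R^i(R/I^t,R)\to\Ext_R^i(R/I^{t+1},R)\to\cdots\to H_I^i(R)$ realizes, on the constituent indexed by $\lambda$, the inclusion of $S_{\lambda(s)}F\otimes S_\lambda G$ into $H_I^i(R)$; since each such constituent \emph{does} occur in $H_I^i(R)$ (again by the $\Gl$-structure of $H_I^i(R)$ from \cite{Raicu-Weyman-Witt14Localcohomologywithsupportinidealsofmaximalminorsandsub-maximalPfaffians}) and multiplicity one forces the limit of a chain of equivariant maps on a fixed isotypic line to be either eventually zero or an isomorphism at every stage, nonvanishing in the limit forces nonvanishing of every transition map. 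Concretely: if some transition map $\Ext_R^i(R/I^t,R)\to\Ext_R^i(R/I^{t+1},R)$ killed the $\lambda$-isotypic line, then that line would map to $0$ in $H_I^i(R)$, contradicting that the composite is the (nonzero, by the explicit description) inclusion of a genuine summand of $H_I^i(R)$. Thus every transition map is injective on every constituent, hence injective. I would close by noting this also re-proves the filtration statement $H_I^i(R)=\bigcup_t\Ext_R^i(R/I^t,R)$ used elsewhere in the paper. The one technical wrinkle to be careful about is matching the degree/weight bookkeeping between the $\Ext_R^i(I^t,R)$ of Theorem~\ref{Th-RWW Ext graded pieces} and $\Ext_R^i(R/I^t,R)$ via the sequence $0\to I^t\to R\to R/I^t\to 0$ (a dimension shift $i\leftrightarrow i-1$ plus the fact that $\Ext_R^j(R,R)$ vanishes for $j>0$), which is routine but must be stated cleanly.
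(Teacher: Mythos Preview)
The paper does not supply its own proof of this theorem; it is quoted from \cite[Section~4]{Raicu-Weyman-Witt14Localcohomologywithsupportinidealsofmaximalminorsandsub-maximalPfaffians}, so there is no in-paper argument to compare against. That said, your proposal has a genuine gap.

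The multiplicity-one strategy is natural and the first half is fine: the maps are $\Gl$-equivariant, the isotypic decompositions are multiplicity free, and the indexing condition $\lambda_n\ge -t-(m-n)$ only loosens as $t$ grows, so the constituent set of $\Ext_R^i(R/I^t,R)$ is contained in that of $\Ext_R^i(R/I^{t+1},R)$. The problem is the step where you write that ``multiplicity one forces the limit of a chain of equivariant maps on a fixed isotypic line to be either eventually zero or an isomorphism at every stage, [so] nonvanishing in the limit forces nonvanishing of every transition map.'' That dichotomy is false. Take the directed system
\[
V_\lambda \xrightarrow{\;0\;} V_\lambda \xrightarrow{\;\sim\;} V_\lambda \xrightarrow{\;\sim\;} \cdots
\]
of copies of an irreducible with the first transition map zero and all later ones isomorphisms: the direct limit is $V_\lambda\ne 0$, yet the first transition map kills everything. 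Thus, knowing that the $\lambda$-component occurs in $H_I^i(R)$ only tells you that \emph{from some stage onward} the transition maps are isomorphisms on that line, not that they all are. Your deduction that the composite $\Ext_R^i(R/I^t,R)\to H_I^i(R)$ is nonzero on the $\lambda$-line for a \emph{given} $t$ is exactly the statement you are trying to prove, so the argument is circular at this point.

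Closing this gap requires real additional input beyond the lists of constituents in Theorem~\ref{Th-RWW Ext graded pieces}. In \cite{Raicu-Weyman-Witt14Localcohomologywithsupportinidealsofmaximalminorsandsub-maximalPfaffians} the injectivity is obtained from the geometric construction of the $\Ext$ modules (via Bott vanishing on a desingularization), which realizes the transition maps concretely enough to see they are inclusions. A purely representation-theoretic alternative would be to analyze the connecting homomorphism $\Ext_R^{i-1}(I^t/I^{t+1},R)\to\Ext_R^i(R/I^t,R)$ and show that no $\lambda$-constituent of the target lies in its image; but this requires controlling the $\Gl$-structure of $\Ext_R^\bullet(I^t/I^{t+1},R)$, which is not free and is essentially the content of the cited result.
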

This immediately gives us the following:
\begin{Cor}\label{Cor-union} 
$$H_{I}^i (R) = \bigcup_{t\geq 0} \Ext_R^i (R/I^t,R).$$
\end{Cor}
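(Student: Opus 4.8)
The plan is to deduce this from the standard presentation of local cohomology as a direct limit of $\Ext$ modules together with Theorem \ref{Th-RWW injectivity of ext}. Recall that for any Noetherian ring $R$, ideal $I$, and $R$-module $M$ there is a natural isomorphism
\[H_I^i(M)\cong \varinjlim_t \Ext_R^i(R/I^t,M),\]
where the maps in the direct system $\{\Ext_R^i(R/I^t,M)\}_{t\geq 1}$ are those induced on $\Ext$ by the canonical surjections $R/I^{t+1}\twoheadrightarrow R/I^t$ (see, e.g., Brodmann--Sharp). Taking $M=R$, the terms of this direct system and its transition maps are precisely the modules and maps appearing in Theorem \ref{Th-RWW injectivity of ext}.

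First I would observe that, since the index set $\ZZ_{\geq 1}$ is directed and, by Theorem \ref{Th-RWW injectivity of ext}, every transition map $\Ext_R^i(R/I^t,R)\to \Ext_R^i(R/I^{t+1},R)$ is injective, every composite $\Ext_R^i(R/I^t,R)\to \Ext_R^i(R/I^{t'},R)$ for $t'\geq t$ is injective as well. Consequently each structure map $\iota_t\colon \Ext_R^i(R/I^t,R)\to \varinjlim_t \Ext_R^i(R/I^t,R)$ is injective: an element annihilated by $\iota_t$ is annihilated by some later composite, hence is already zero. Under the identification $H_I^i(R)\cong\varinjlim_t \Ext_R^i(R/I^t,R)$, the map $\iota_t$ is exactly the natural map $\Ext_R^i(R/I^t,R)\to H_I^i(R)$.

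Finally I would invoke the concrete description of a filtered colimit of modules: every element of $\varinjlim_t \Ext_R^i(R/I^t,R)$ is the image under some $\iota_t$ of an element of $\Ext_R^i(R/I^t,R)$. Hence $H_I^i(R)=\bigcup_{t\geq 1}\im(\iota_t)=\bigcup_{t\geq 1}\Ext_R^i(R/I^t,R)$, where the last equality identifies $\Ext_R^i(R/I^t,R)$ with its (injective) image in $H_I^i(R)$; allowing $t=0$ contributes nothing since $\Ext_R^i(R/I^0,R)=\Ext_R^i(0,R)=0$. There is essentially no obstacle here: the only input beyond formal facts about direct limits is the injectivity statement of Theorem \ref{Th-RWW injectivity of ext}, which is quoted, so the whole argument is a short unwinding of definitions. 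The one point meriting care is the verification that the transition maps in the standard direct-limit presentation of $H_I^i$ coincide with the maps $\Ext_R^i(R/I^t,R)\to\Ext_R^i(R/I^{t+1},R)$ named in Theorem \ref{Th-RWW injectivity of ext}; both are induced by $R/I^{t+1}\twoheadrightarrow R/I^t$, so this is immediate.
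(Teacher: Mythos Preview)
Your proposal is correct and follows exactly the approach the paper intends: the paper simply records the corollary as ``immediate'' from Theorem \ref{Th-RWW injectivity of ext}, and your argument is precisely the standard unwinding of that word via $H_I^i(R)\cong \varinjlim_t \Ext_R^i(R/I^t,R)$ together with the injectivity of the transition maps.
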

 More generally, the pairs of $\Gl$-invariant ideals $I_{\chi_1}$ and $I_{\chi_2}$ for which 
 \[\Ext_R^i (R/I_{\chi_1},R)\to \Ext_R^i (R/I_{\chi_2},R)\] is injective is classified in \cite{RaicuWeyman14Localcohomologywithsupportingenericdeterminantalideals} and \cite{Raicu18Regularityandcohomologyofdeterminantalthickenings}. In particular, Theorem \ref{Th-RWW injectivity of ext} and Corollary \ref{Cor-union} fail for ideals of non-maximal minors.

\subsection{Other facts on local cohomology of determinantal things}
Let $R,I$ be as in Section \ref{section-Gl equivariant description of Ext} and let $\DD =R[\{\partial_{ij}\}]$ be the Weyl algebra. 
The action of differentiation makes $R$ a left $\DD$-module and formal application of the quotient rule then gives $R_a=R[\frac{1}{a}]$ a $\DD$-modules structure for all $a\in R$. Thus for any ideal $J=(a_1,\ldots,a_k)$, the \v{C}ech complex, $\check{C}^\bullet(a_1,\ldots, a_k; R)$, is a complex of $\DD$-modules, hence $H_J^i(R) \cong H^i(\check{C}^\bullet(a_1,\ldots, a_k; R))$ carries the structure of a $\DD$-modules.

\begin{Th}\label{Th-LSW}\cite[Theorem 1.2]{Witt12Localcohomologywithsupportinidealsofmaximalminors}\cite[Theorem 1.2, Theorem 3.1]{Lyubeznik-Singh-Walther16Localcohomologymodulessupportedatdeterminantalideals} There exists a degree preserving isomorphism of $\DD$-modules: 
$$H_{I_{r}(X)}^{mn-r^2+1}(R) \cong H_\maxm^{mn}(R),$$
in particular,
$$H_{I}^{mn-n^2+1}(R) \cong H_\maxm^{mn}(R).$$
\end{Th}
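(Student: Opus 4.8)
The plan is to identify $M := H^{mn-r^2+1}_{I_r(X)}(R)$ with $E := H^{mn}_{\maxm}(R)$ by showing that $M$ is a holonomic $\DD$-module, that it is supported set-theoretically only at $\maxm$, and that its graded Hilbert series is that of $E$; Kashiwara's equivalence then upgrades this to a degree-preserving $\DD$-module isomorphism $M\cong E$. Recall that $\operatorname{cd}(R,I_r(X))=mn-r^2+1$ by Bruns--Schw\"anzl, so $M$ is the top nonvanishing local cohomology module of $R$ with support in $I_r(X)$. It is holonomic because $R$ is a holonomic $\DD$-module and the \v Cech complex realizes local cohomology from localizations, finite direct sums, and subquotients, all of which preserve holonomicity. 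By Kashiwara's theorem a holonomic $\DD$-module supported at $\maxm$ is a finite direct sum $\bigoplus_{j=1}^a E(b_j)$ with $b_j\in\ZZ$, whose graded Hilbert series is $\big(\sum_{j=1}^a t^{-b_j}\big)$ times that of $E$; so the equality of Hilbert series forces $a=1$ and $b_1=0$. Every object here is graded ($I_r(X)$ and $\maxm$ are homogeneous and all maps used are homogeneous), so the resulting isomorphism is degree preserving.

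For the support statement, let $\mathfrak p\supseteq I_r(X)$ be a homogeneous prime with $\mathfrak p\neq\maxm$. Since $\maxm=(\{x_{ij}\})$ is the only prime containing all the entries of $X$, the matrix $X$ has rank $s$ modulo $\mathfrak p$ for some $1\le s\le r-1$, so some $s\times s$ minor $\Delta$ of $X$ is a unit in $R_{\mathfrak p}$. By the standard local structure of generic determinantal ideals (Bruns--Vetter), inverting $\Delta$ exhibits $R[\Delta^{-1}]$ as a localization of a polynomial extension of $\CC[Y]$, where $Y$ is a generic $(m-s)\times(n-s)$ matrix, and under this identification $I_r(X)$ becomes $I_{r-s}(Y)$. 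Cohomological dimension does not increase under localization and is unchanged by adjoining variables not in the ideal, so $\operatorname{cd}(R_{\mathfrak p},I_r(X)R_{\mathfrak p})\le\operatorname{cd}(\CC[Y],I_{r-s}(Y))=(m-s)(n-s)-(r-s)^2+1$, again by Bruns--Schw\"anzl (valid since $1\le r-s\le n-s$). An elementary computation shows $(m-s)(n-s)-(r-s)^2+1<mn-r^2+1$ exactly when $s\ge 1$ and $2r<m+n$; this holds in particular for $r=n<m$ (the case used in the body of the paper), and more generally whenever $(r,m,n)\neq(n,n,n)$. Hence $mn-r^2+1$ exceeds the cohomological dimension of $I_r(X)R_{\mathfrak p}$, so $M_{\mathfrak p}=0$; since $M\neq 0$ and is $I_r(X)$-torsion, $\Supp M=\{\maxm\}$.

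The remaining point, which I expect to be the main obstacle, is to compute the graded Hilbert series of $M$ and see that it coincides with that of $E$. I would do this with the geometric technique of Kempf and Weyman. From the long exact sequence of local cohomology and the vanishing of $H^{>0}(\AA^{mn},\mathcal O)$ one gets $H^i_{I_r(X)}(R)\cong H^{i-1}(U,\mathcal O_U)$ for $i\ge 2$, where $U=\AA^{mn}\setminus V(I_r(X))$; note $mn-r^2+1\ge 2$. When $r=n$, the locus $U$ of full column rank is a $\Gl_n$-torsor over the Grassmannian $\operatorname{Gr}(n,m)$, whose dimension $mn-n^2$ equals $(mn-r^2+1)-1$; pushing $\mathcal O_U$ down and evaluating the resulting top cohomology of an explicit homogeneous vector bundle on $\operatorname{Gr}(n,m)$ by Bott's theorem produces the full graded $\Gl$-equivariant structure of $M$, which one checks agrees term by term with that of $H^{mn}_{\maxm}(R)$. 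For general $r$ one argues in the same spirit using a resolution of singularities of $V(I_r(X))$, or simply invokes the explicit $\Gl$-equivariant formulas for $H^\bullet_{I_r(X)}(R)$ obtained this way by Raicu--Weyman--Witt and by Raicu--Weyman, from which the comparison is immediate. Combining this with the two preceding paragraphs and Kashiwara gives $M\cong E$ as graded $\DD$-modules. The real content sits in this last step: the soft structural arguments only yield $M\cong\bigoplus_j E(b_j)$, and it is the explicit cohomology computation that rules out additional summands and pins the shift to zero.
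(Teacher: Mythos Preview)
The paper does not prove this theorem; it is quoted as background with citations to Witt and to Lyubeznik--Singh--Walther, so there is no in-paper argument to compare against. Your outline is a correct strategy and is close in spirit to those references: holonomicity is standard, the support reduction via the Bruns--Vetter localization trick is exactly right (a small quibble: you only need the inequality $\operatorname{cd}(\CC[Y],I_{r-s}(Y))\le (m-s)(n-s)-(r-s)^2+1$, which follows from the Bruns--Schw\"anzl arithmetic-rank bound, not the equality you wrote), and graded Kashiwara then reduces the problem to identifying the multiset of shifts $\{b_j\}$.

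Where your route is heavier than necessary is the last step. Matching the entire graded Hilbert series via the Kempf--Weyman/Bott computation (or by invoking Raicu--Weyman--Witt) certainly works, but that computation already contains strictly more information than the theorem. Once you know $M\cong\bigoplus_j E(b_j)$, it suffices to show that $\Soc M$ is one-dimensional and sits in degree $-mn$; this is a single graded piece, not the whole series, and can be extracted by much lighter means (e.g.\ Witt's original argument for $r=n$ proceeds by an induction using the second vanishing theorem and avoids the equivariant machinery entirely, and Lyubeznik--Singh--Walther give a uniform $\DD$-module argument). So your plan is sound, but the ``main obstacle'' you identify can be replaced by a much smaller computation; as written, you are invoking results that logically subsume the theorem rather than proving it from first principles.
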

As these are cyclic $\DD$-modules, in order to describe an isomorphism as above, we just need to choose a socle generator of $H_{I}^{mn-n^2+1}(R)$ and $H_\maxm^{mn}(R).$ Such a map will be constructed in section \ref{section-spec to n times (n-1)} in the case where $X$ is size $n\times (n-1)$.

In light of Corollary \ref{Cor-union}, Theorem \ref{Th-LSW} and local duality gives us an avenue to examine $H_\maxm^{n^2-1}(R/I^t)$.  Some implications of this result to the asymptotic structure of the graded components of $H_\maxm^{n^2-1}(R/I^t)$ has been remarked on in \cite{BBLZ19Stabilizationofthecohomologyofthickenings} and \cite{Kenkel20IsomorphismsBetweenLocalCohomologyModulesAsTruncationsofTaylorSeries}. We obtain the following result on the structure of $H_\maxm^{n^2-1}(R/I^t)$ as an $R$-module.
\begin{Prop}\label{prop-cyclic R module, general case}
$H_\maxm^{n^2-1}(R/I^t)$ is a cyclic $R$-module generated in degree $0$. In other words, there exists $J\subseteq R$ such that $H_\maxm^{n^2-1}(R/I^t)\cong R/J$. 
\end{Prop}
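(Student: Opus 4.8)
The plan is to combine graded local duality with Theorem~\ref{Th-LSW} and Corollary~\ref{Cor-union} to reduce the claim to a statement about the $\Ext$ modules that sit inside $H_\maxm^{mn}(R)$. First I would apply graded local duality over the standard-graded polynomial ring $R$ in $mn$ variables: writing $E$ for the injective hull of $\CC=R/\maxm$, local duality gives
\[
H_\maxm^{n^2-1}(R/I^t)^\vee \cong \Ext_R^{mn-n^2+1}(R/I^t, R)(-mn)
\]
up to the usual degree shift, where $(-)^\vee$ is graded Matlis duality. So it suffices to show that $\Ext_R^{mn-n^2+1}(R/I^t,R)$ is, after Matlis duality, a cyclic $R$-module generated in a single degree; equivalently that $\Ext_R^{mn-n^2+1}(R/I^t,R)$ is a \emph{cocyclic} $R$-module (i.e. has simple socle, or embeds as a submodule of $E$ up to shift) — and more precisely that it is an $R$-submodule of $H_\maxm^{mn}(R)$, since the latter is exactly $E(mn)$ up to grading conventions. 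The cyclicity of $R/J$ for the Matlis dual will then follow by dualizing back.

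Next I would make the identification concrete. From the short exact sequence $0\to I^t\to R\to R/I^t\to 0$ we get $\Ext_R^{mn-n^2+1}(R/I^t,R)\cong \Ext_R^{mn-n^2}(I^t,R)$ for the relevant cohomological degree (using that $mn-n^2+1\ge 2$, so the boundary map in the long exact sequence is an isomorphism; the outlying low-degree $\Ext$ of $R$ vanish). By Theorem~\ref{Th-RWW injectivity of ext} and Corollary~\ref{Cor-union}, the natural maps
\[
\Ext_R^{mn-n^2+1}(R/I^t,R)\hookrightarrow \Ext_R^{mn-n^2+1}(R/I^{t+1},R)\hookrightarrow\cdots
\]
are injective with union $H_I^{mn-n^2+1}(R)$. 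By Theorem~\ref{Th-LSW} (the Lyubeznik--Singh--Walther / Witt isomorphism) $H_I^{mn-n^2+1}(R)\cong H_\maxm^{mn}(R)$ as graded $\DD$-modules, and $H_\maxm^{mn}(R)$ is a cyclic $\DD$-module whose $R$-submodules are exactly the graded $R$-submodules of $E(mn)$. Therefore $\Ext_R^{mn-n^2+1}(R/I^t,R)$ is a graded $R$-submodule of $H_\maxm^{mn}(R)$, hence cocyclic: its socle is the one-dimensional socle of $H_\maxm^{mn}(R)$ (namely the class $\frac{1}{x_{11}\cdots x_{mn}}$, living in degree $-mn$), and dualizing, $H_\maxm^{n^2-1}(R/I^t)$ is a cyclic $R$-module with a single generator.

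Finally I would pin down the degree of the generator. The socle of $H_\maxm^{mn}(R)$ lies in degree $-mn$; under graded local duality with the canonical module $\omega_R = R(-mn)$ this socle corresponds to the degree-$0$ part of the Matlis dual, so the cyclic generator of $H_\maxm^{n^2-1}(R/I^t)$ sits in degree $0$. Concretely: $[H_\maxm^{n^2-1}(R/I^t)]^\vee$ is generated over $R$ by one element in degree $0$, so $H_\maxm^{n^2-1}(R/I^t)\cong R/J$ as graded modules for $J=\ann_R H_\maxm^{n^2-1}(R/I^t)$. The main obstacle is the bookkeeping of grading shifts across the two dualities (local duality and the $H_I\cong H_\maxm^{mn}$ isomorphism) so that "generated in degree $0$" comes out correctly; the structural input — injectivity of the $\Ext$ transition maps and the cyclicity of $H_\maxm^{mn}(R)$ — is already available, so no hard new computation is needed, and indeed the precise ideal $J$ is identified separately in Theorems~\ref{Th-J_t description nx(n-1)} and~\ref{Th-J_t description}.
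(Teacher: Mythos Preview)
Your proposal is correct and follows essentially the same route as the paper: embed $\Ext_R^{mn-n^2+1}(R/I^t,R)$ into $H_\maxm^{mn}(R)$ via Theorem~\ref{Th-RWW injectivity of ext} and Theorem~\ref{Th-LSW}, deduce that its socle has dimension at most one, and translate this through graded local duality. The paper's write-up is slightly tighter in two places you could sharpen: it notes explicitly that the Ext module is finite length (finitely generated inside an Artinian module), and it says socle dimension \emph{at most} one rather than equal to one, which covers the case $t<n$ where the module vanishes.
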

\begin{proof}
First to show $H_m^{(n-1)^2-1}(R/I^t)$ is cyclic, by graded duality it is sufficient to show that $\Ext_R^{mn-n^2+1}(R/I^t,R)$ is finite length and has socle dimension at most $1$. By Theorem \ref{Th-RWW injectivity of ext} and Theorem \ref{Th-LSW} we have that $\Ext_R^{mn-n^2+1}(R/I^t,R)\hookrightarrow  H_{\maxm}^{mn}(R)$. Thus $\Ext_R^{mn-n^2+1}(R/I^t,R)$ is a finitely generated submodule of an Artinian module hence has finite length. Moreover, since $H_{\maxm}^{mn}(R)$ has socle dimension 1 we have that $\Ext_R^{mn-n^2+1}(R/I^t,R)$  has socle dimension at most $1$. That $H_m^{(n-1)^2-1}(R/I^t)$ is generated in dimension $0$ follows by graded duality since $\Soc (\Ext_R^{mn-n^2+1}(R/I^t,R))=\Soc(H_{\maxm}^{mn}(R))$ is generated in degree $-mn$.
\end{proof}
%% Could consider justifying this further, i.e. Ulrich 18.17

%% Can just say that it follows from applying duality of the inclusion EXT to H_I ?

%%%%%%%%%%%%%%%%%%%%%%%%%%%%%%%%%%%%%%%%%%%%%%%%%%

\section{A Map Between Complexes}\label{section-lift}
Let $A$ be a set, we write $\#A$ for the cardinality of $A$. Let $X$ be a $n \times (n-1)$ matrix of indeterminates. For $A\subseteq \{1,\ldots,n\}$ and $H\subseteq \{1,\ldots,n-1\}$ with $\#A=\#H$, we write $X_{A,H}$ for the determinant of the submatrix of $X$ coming with rows in $A$ and columns in $H$. We will make use of the Hilbert-Burch theorem so it is convenient for this section to use signed minors: set $\Delta_i = (-1)^i X_{\{i\}^c,\emptyset^c}$, that is to say $(-1)^i$ times the maximal minor of $X$ obtained by deleting the $i$th row.

Let $\KK$ be a field and $R=\KK[X]$, set $I=I_n(X)=(\Delta_1,\ldots,\Delta_n)\subseteq R$ the ideal of maximal minors of $X$. In this case the \textit{Rees algebra} of $I$, 
$$\Rees (I):=\bigoplus_{i\geq 0} I^i t^i \subseteq R[t],$$
is linear type, i.e., 
$$\Rees (I) \cong S/(F_1,\ldots,F_{n-1})$$ where $S=R[T_1 \cdots T_n]$ and $F_j=\sum_{i=1}^n x_{ij} T_i$. Moreover $\Rees (I)$ is a complete intersection so the Koszul complex of $[F_1, \cdots, F_{n-1}]: S^{n-1}\to S$ is a resolution. In this section we will compare the linear strands of this Koszul complex to the Koszul complexes of $[\Delta_1^t  \cdots \Delta_{n}^t]:R^n \to R$.

More precisely, we will describe maps of complexes, $\vp_r$, for each $r$ making the following diagram commute.
\begin{equation}\label{Diagram-lift}
\begin{tikzcd}
\left[K_\bullet ([F_1 \ldots F_{n-1}];S)\right]_t \ar{r} &  \left[\Rees(I)\right]_t \ar{r} & 0  \\
K_{\bullet-1} ([\Delta_1^t \ldots\Delta_{n}^t];R) \ar{r} \ar{u}{\vp_t} & (\Delta_1^t, \cdots,\Delta_{n}^t) \ar[hookrightarrow]{u} \ar{r}& 0.
\end{tikzcd}
\end{equation}
Where $\left[K_\bullet ([F_1 \ldots F_{n-1}];S)\right]_t$ denotes the $t$-th linear strand of $K_\bullet ([F_1 \ldots F_{n-1}];S)$.

First we need to establish some notation and prove a small lemma that will be helpful later.

\begin{Not}\;
\begin{enumerate}
    \item Let $A=\{a_1,\ldots,a_r\}\subseteq\ZZ_{\geq 1}$ with $a_1 < a_2<\ldots <a_r$. Then set
    \[e_A:=e_{a_1} \wedge e_{a_2} \wedge\ldots \wedge e_{a_r}\] and 
    \[\Delta_A:= \prod_{a\in A} \Delta_a.\]
    \item Let $A,B$ be ordered sets of integers then 
    \[\rho(A,B):=\begin{cases} 0 & \text{if } A\cap B \neq \emptyset\\
    (-1)^{\nu(A,B)} & \text{else}
    \end{cases} \]
    where $\nu(A,B)=\#\{(a,b)\subseteq A \times B | \; a>b\}$.
    \item Let $A\subseteq \{1,..,n\}$ and $H\subseteq \{1,..,n-1\}$ with $\#A=r$ and $\#H=r-2$. Then set 
    \[Y_{A,H,i} := \det \begin{bmatrix} 
    r_{i,H^c} \\
    Z
    \end{bmatrix},\]
    where $r_{i,H^c}$ is the entries of the i-th row of $X$ with columns in $H^c$ and $Z$ is the submatrix of $X$ with rows in $A^c$ and columns in $H^c$.
    \item For $A$ an ordered sets of integers set $(-1)^A:=(-1)^{\sum_{a\in A}a}$.
\end{enumerate}
\end{Not}

\begin{Lemma}\label{Lemma-ids}\;

\begin{enumerate}
    \item $$Y_{A,H,i}=\sum_{\alpha\in H^c} \rho(\{\alpha\},H^c\sm \{\alpha\}) x_{i\alpha} X_{A^c (H\cup {\alpha})^c}.$$
    
    \item Let $A\subseteq \{1,\ldots, m\}$ and $\alpha\in A$. Then, $$\rho(\{\alpha\},A\sm {\alpha}) \rho(\{\alpha\},A^c)=(-1)^{\alpha-1} .$$
\end{enumerate}
\end{Lemma}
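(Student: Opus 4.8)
The plan is to prove the two combinatorial identities in Lemma \ref{Lemma-ids} by direct expansion, treating them separately since they are of quite different flavor.

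\textbf{Part (1).} This is a cofactor (Laplace) expansion identity. The determinant $Y_{A,H,i}$ is the determinant of an $r \times r$ matrix whose first row is $r_{i,H^c}$ (the $i$-th row of $X$ restricted to the columns in $H^c$, which has $r$ entries since $\#H^c = (n-1)-(r-2) = r-1$... wait, let me recount: $H \subseteq \{1,\ldots,n-1\}$ with $\#H = r-2$, so $\#H^c = r-1$; and $A^c \subseteq \{1,\ldots,n\}$ with $\#A = r$, so $\#A^c = n-r$; thus $Z$ is $(n-r)\times(r-1)$ and the stacked matrix is $(n-r+1)\times(r-1)$ — for this to be a square determinant we need $n-r+1 = r-1$, but that need not hold, so the determinant here must be interpreted via the convention that makes sizes match; I would first pin down the exact size convention from the surrounding text and then) the first step is to expand $Y_{A,H,i}$ along its first row. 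Each term picks out the entry $x_{i\alpha}$ for $\alpha \in H^c$, times $(-1)$ to the appropriate position sign, times the minor of $X$ obtained by deleting row $i$ and column $\alpha$, i.e. the determinant with rows $A^c$ and columns $(H \cup \{\alpha\})^c$. The only thing to check is that the position sign of the entry $x_{i\alpha}$ in the first row equals $\rho(\{\alpha\}, H^c \setminus \{\alpha\})$: the column $\alpha$ sits in position $\nu(\{\alpha\}, H^c) + 1$ among the ordered columns $H^c$, wait — among the columns of $H^c$ that are less than $\alpha$ there are $\#\{\beta \in H^c : \beta < \alpha\} = \nu(\{\alpha\}, H^c \setminus \{\alpha\})$ of them (since $\alpha$ itself contributes $0$), so $\alpha$ is in column-position $\nu(\{\alpha\}, H^c \setminus \{\alpha\}) + 1$, giving cofactor sign $(-1)^{1 + (\nu+1)} = (-1)^{\nu} = \rho(\{\alpha\}, H^c \setminus \{\alpha\})$. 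So Part (1) is just Laplace expansion along the top row, with a careful bookkeeping of the position of $\alpha$ inside the ordered set $H^c$.

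\textbf{Part (2).} Here $A \subseteq \{1,\ldots,m\}$ and $\alpha \in A$. Write $p = \#\{a \in A : a < \alpha\}$ and $q = \#\{a \in A^c : a < \alpha\}$, so that $p + q = \alpha - 1$ (every integer strictly less than $\alpha$ lies in exactly one of $A$ or $A^c$). By definition of $\nu$, we have $\nu(\{\alpha\}, A \setminus \{\alpha\}) = p$ and $\nu(\{\alpha\}, A^c) = q$ (note $\alpha \notin A^c$, so no subtlety there). Since $\alpha \notin A \setminus \{\alpha\}$ and $\alpha \notin A^c$, both $\rho$'s are nonzero, and
\[
\rho(\{\alpha\}, A \setminus \{\alpha\}) \cdot \rho(\{\alpha\}, A^c) = (-1)^p \cdot (-1)^q = (-1)^{p+q} = (-1)^{\alpha - 1}.
\]
That is the whole argument.

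\textbf{Main obstacle.} Neither part is mathematically deep; the only real risk is in the conventions. For Part (2) everything is clean. For Part (1) the subtlety is making sure the sign convention for $\rho$ matches the cofactor sign coming out of the Laplace expansion exactly — in particular whether the ``$+1$'' offsets from the row index and the column index combine to cancel as claimed — and confirming that the matrix $Y_{A,H,i}$ is genuinely square under the intended size conventions (i.e. that $\#H^c = \#A^c + 1$, equivalently $r - 1 = (n-r) + 1$, so that the excerpt is implicitly in the regime $n = 2r - 2$, or else that $Y_{A,H,i}$ is defined only when these sizes match, which is precisely the situation in which it gets used later). I would resolve this by writing out the expansion for a small explicit case ($r = 3$, say) to lock down the sign, then present the general position-counting argument. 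So the expected hard part is purely notational hygiene around signs and matrix sizes, not the combinatorics itself.
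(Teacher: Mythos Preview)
Your approach is exactly the paper's: Part (1) is Laplace expansion along the first row (your sign bookkeeping via the position of $\alpha$ in $H^c$ is correct), and your Part (2) is verbatim the paper's counting argument $p+q=\alpha-1$. Your only slip is arithmetic: $\#H^c=(n-1)-(r-2)=n-r+1$, not $r-1$, so with $\#A^c=n-r$ the stacked matrix is always $(n-r+1)\times(n-r+1)$ and genuinely square---your worry about a special regime $n=2r-2$ is unfounded.
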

\begin{proof}\;

\begin{enumerate}
    \item This is an expansion of the determinant along the first row.
    \item $\rho(\{\alpha\},A\sm {\alpha}) \rho(\{\alpha\},A^c)=(-1)^{\nu(\{\alpha\},A\sm {\alpha})+ \nu(\{\alpha\},A^c)}$. Now, 
    \[\nu(\{\alpha\},A\sm {\alpha})+\nu(\{\alpha\},A^c)=\#\{b\in A\sm \{\alpha\}| \alpha>b \} +\#\{b\in A^c| \alpha>b \}.\]
    Since $A\sm \{\alpha\}$ and $A^c$ are disjoint we have that
    \[\nu(\{\alpha\},A\sm {\alpha})+\nu(\{\alpha\},A^c)=\#\{b\in \{1\ldots m\}\sm \{\alpha\}| \alpha>b\}=\alpha-1.\]
\end{enumerate}
\end{proof}

Our strategy will be to consider each commutative square of Diagram \eqref{Diagram-lift} and induct on $t$. Theorem \ref{Th-base case} and Corollary \ref{Cor-extended base case} will constitute the base case of this induction with Theorem \ref{Th-base case} addressing the first $t$ for which a every module in a square is non-zero.

\begin{Th}\label{Th-base case}
Consider the following diagram for $n\geq r\geq 2$.
\[\begin{tikzcd}
\bigwedge^{r-1} (R1)^{n-1} \ar{r}{\delta} & \bigwedge^{r-2} (\bigoplus RT_i)^{n-1} \\
\bigwedge^r R^n \ar{r}{d} \ar{u}{\vp_{r-1}^{r-1}}& \bigwedge^{r-1} R^n \ar{u}{\vp_{r-1}^{r-2}},
\end{tikzcd}\]
where $\delta$ is the map on the $(r-1)$-st linear strand of $K_\bullet([F_1\ldots F_{n-1}];S)$ and the bottom map, $d$, is the differential of $K([\Delta_1^{r-1}\ldots\Delta_n^{r-1}];R)$. 

Let $(f_j)_{j=1}^{n-1}$ denote the standard $S$-basis of $S^{n-1}$ and $(e_i)_{i=1}^{n}$ denote the standard $R$-basis for $R^n$. Define the vertical maps as follows:  let $A,B\subseteq \{1,\ldots,n\}$ with $\#A=r$ and $\#B=r-1$, set

\begin{align*}
    \vp_{r-1}^{r-1}(e_A):=(-1)^{r-1} \Delta_A^{r-2} \sum_{\substack{K\subseteq\{1\ldots n-1\} \\ \#K=r-1}} (-1)^{A+K} X_{A^c K^c} f_K 
\end{align*}

\begin{align*}
 \vp_{r-1}^{r-2}(e_B) &:= (-1)^{r} \Delta_B^{r-3} \sum_{\substack{K\subseteq\{1\ldots n-1\} \\ \#K=r-2}} (-1)^{B+K} X_{B^c K^c} f_K \sum_{b\in B} \Delta_B\frac{T_b}{\Delta_b} \\
 &= (-1)^{r} \Delta_B^{r-2} \sum_{b\in B} \frac{T_b}{\Delta_b} \sum_{\substack{K\subseteq\{1\ldots n-1\} \\ \#K=r-2}} (-1)^{B+K} X_{B^c K^c} f_K 
\end{align*}
Then the diagram above commutes. (Note that for $r=2$, $\vp_{r-1}^{r-2}$ is the map that takes $e_i \to T_i$).
\end{Th}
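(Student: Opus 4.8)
The plan is to verify commutativity of the square directly by computing both composites $\delta \circ \vp_{r-1}^{r-1}$ and $\vp_{r-1}^{r-2}\circ d$ on a basis element $e_A$ with $\#A = r$, and checking they agree coefficient-by-coefficient on the $f_K$ with $\#K = r-2$. First I would record the two differentials explicitly: the Koszul differential $d(e_A) = \sum_{a\in A} \rho(\{a\}, A\setminus\{a\})\, \Delta_a^{r-1}\, e_{A\setminus\{a\}}$ (with the appropriate sign, written in terms of $\rho$ or $(-1)^{\mathrm{position}}$), and the linear-strand differential $\delta$ on $S^{n-1}$, which sends $f_L \mapsto \sum_j$ (coefficient from $F_j = \sum_i x_{ij}T_i$) times the relevant wedge; concretely $\delta(f_L) = \sum_{\ell\in L}\rho(\{\ell\},L\setminus\{\ell\})\big(\sum_i x_{i\ell} T_i\big) f_{L\setminus\{\ell\}}$. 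With these in hand, both sides of the desired identity become double sums indexed by a pair (an element removed from $A$ or from a column set, and a surviving column set $K$ of size $r-2$), and the goal is a polynomial identity among products of minors $X_{A^c K^c}$, entries $x_{i\alpha}$, and the $\Delta$'s.

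The key simplification, and the reason Lemma \ref{Lemma-ids} was proved, is that the auxiliary determinants $Y_{A,H,i}$ package exactly the sums $\sum_{\alpha\in H^c}\rho(\{\alpha\},H^c\setminus\{\alpha\})x_{i\alpha}X_{A^c(H\cup\alpha)^c}$ that appear when one expands $\delta\circ\vp_{r-1}^{r-1}(e_A)$: after collecting terms, the $T_i$-coefficient of $f_K$ in $\delta\circ\vp_{r-1}^{r-1}(e_A)$ should be (up to an overall sign and a power of $\Delta_A$) exactly $Y_{A\cup?, K, i}$ or a signed combination of such, which one then rewrites using part (1) of the lemma. On the other side, $\vp_{r-1}^{r-2}\circ d(e_A)$ produces, for each $a\in A$, the term $\Delta_a^{r-1}$ times $\vp_{r-1}^{r-2}(e_{A\setminus\{a\}})$, and the latter already carries a factor $\sum_{b\in A\setminus\{a\}}\Delta_{A\setminus\{a\}}T_b/\Delta_b$ together with minors $X_{(A\setminus a)^c K^c}$. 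Matching the two requires carefully reconciling the $\Delta$-bookkeeping (the identity $\Delta_A^{r-2}$ vs. $\Delta_a^{r-1}\Delta_{A\setminus a}^{r-3}\cdot\Delta_{A\setminus a}$ and factors $T_b/\Delta_b$) and the sign bookkeeping, where part (2) of Lemma \ref{Lemma-ids}, $\rho(\{\alpha\},A\setminus\alpha)\rho(\{\alpha\},A^c)=(-1)^{\alpha-1}$, is the tool that converts between the ``local'' signs $\rho$ coming from wedge reorderings and the ``global'' signs $(-1)^{A+K}$, $(-1)^a$ appearing in the definitions of the $\vp$'s.

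After the reductions, the heart of the matter is a single Laplace-type determinantal expansion identity: the statement that $Y_{A,H,i}$, expanded along its first row, equals the signed sum of $(r-1)\times(r-1)$ minors that also arises from expanding a larger minor along a column, i.e.\ essentially the cofactor/Laplace expansion relating $X_{A^c K^c}$ and $X_{(A\setminus a)^c K^c}$. I would isolate this as the computational core and verify it by a direct expansion, being careful that the $r=2$ degenerate case (where $\vp_{r-1}^{r-2}$ is just $e_i\mapsto T_i$, the sums over $K$ collapse to the empty column set, and $\Delta_B^{r-3}=\Delta_B^{-1}$ must be read together with the $\sum_b \Delta_B T_b/\Delta_b$ so no actual denominators appear) is consistent with the general formula.

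The main obstacle I anticipate is purely the sign and index bookkeeping: there are at least three independent sign conventions in play — the $(-1)^i$ built into $\Delta_i$, the $\rho(A,B)$ permutation signs from rearranging wedge products, and the $(-1)^{A+K}$, $(-1)^r$ factors hard-coded into $\vp_{r-1}^{r-1}$ and $\vp_{r-1}^{r-2}$ — and getting a clean match means systematically using Lemma \ref{Lemma-ids}(2) to trade one for another until both composites are written in a common normal form. The underlying algebra (Laplace expansion of minors) is standard; the risk is entirely in a dropped sign or an off-by-one in which index set is being summed, so I would organize the proof so that each sign manipulation cites the lemma explicitly rather than being absorbed silently.
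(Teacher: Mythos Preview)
Your plan is essentially the paper's own proof: compute both composites on $e_A$, push $\delta\circ\vp_{r-1}^{r-1}(e_A)$ into the form $(-1)^r\Delta_A^{r-2}\sum_H(-1)^{A+H}f_H\sum_{i\in A}Y_{A,H,i}T_i$ via Lemma~\ref{Lemma-ids}, and reduce $\vp_{r-1}^{r-2}\circ d(e_A)$ to the same expression.

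There is, however, one ingredient you have not anticipated and that Lemma~\ref{Lemma-ids} alone does not supply. On the $\vp_{r-1}^{r-2}\circ d$ side, after expanding and applying Lemma~\ref{Lemma-ids}(1)--(2), you are left with sums of the form $\sum_{\gamma\in A\setminus\{i\}}\Delta_\gamma x_{\gamma\alpha}$. The paper converts these via the \emph{syzygy relation} $\sum_{i=1}^n\Delta_i x_{i\alpha}=0$ (the Hilbert--Burch/Cramer identity for the signed maximal minors of an $n\times(n-1)$ matrix) to $-\sum_{\gamma\in A^c\cup\{i\}}\Delta_\gamma x_{\gamma\alpha}$, and then uses that $Y_{A,H,\gamma}=0$ for $\gamma\notin A$ (repeated row in a determinant) to collapse everything to the single term $\Delta_iY_{A,H,i}$. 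This relation is what actually makes the two sides match, and it is not merely ``Laplace expansion relating $X_{A^cK^c}$ and $X_{(A\setminus a)^cK^c}$''; without it you will find the $\vp_{r-1}^{r-2}\circ d$ side carrying an extraneous sum over $\gamma\in A\setminus\{i\}$ that refuses to simplify. Build this identity into your computation explicitly.
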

Before we begin the proof we first give an example.

\begin{Ex}\label{ex-lift1} Suppose $n=3$ and $t=2$ then Diagram \eqref{Diagram-lift} becomes:
\[\begin{tikzcd}
  0\ar{r}& \left[\bigwedge^2 S^2 \right]_0 \ar{r}& \left[\bigwedge^1 S^2 \right]_1 \ar{r}& \left[\bigwedge^0 S^{2}\right]_2 \ar{r} & I^2 \ar{r} & 0\\
  0 \ar{r}& \bigwedge^3 R^3 \ar{r} \ar{u}{\vp^2_2}& \bigwedge^3 R^2 \ar{r} \ar{u}{\vp^1_2}& \bigwedge^1 R^3 \ar{r} & (\Delta_1^2,\Delta_2^2,\Delta_3^2) \ar[hookrightarrow]{u} \ar{r}&0. 
\end{tikzcd}
\]
Then, Theorem \ref{Th-base case} says the left square commutes for,
\begin{align*}
    \vp_2^2(e_1 \wedge e_2 \wedge e_3)&= (-1)^2 \Delta_1  \Delta_2 \Delta_3( (-1)^{(6+3)}f_1 \wedge f_2)\\
    &=- \Delta_1  \Delta_2 \Delta_3(f_1 \wedge f_2)
\end{align*}  
and
\begin{align*}
    \vp_2^1(e_a \wedge e_b)&=(-1)^{3-2} \Delta_a \Delta_b \left( \frac{T_a}{\Delta_a}+\frac{T_b}{\Delta_b}\right)( (-1)^{a+b+1}x_{\{a,b\}^c,2} f_1 + (-1)^{a+b+2}x_{\{a,b\}^c,1} f_2)\\
    &=-( \Delta_b T_a + \Delta_a T_b)( (-1)^{a+b+1}x_{\{a,b\}^c,2} f_1 + (-1)^{a+b+2}x_{\{a,b\}^c,1} f_2)\\
    &=\begin{cases}  -( \Delta_2 T_1 + \Delta_1 T_2)( x_{3,2} f_1 -x_{3,1} f_2 )& (a,b)=(1,2),\\
    -( \Delta_3 T_1 + \Delta_1 T_3)( -x_{2,2} f_1 + x_{2,1} f_2)& (a,b)=(1,3),\\
    -( \Delta_3 T_2 + \Delta_2 T_3)( x_{1,2} f_1  -x_{1,1} f_2)& (a,b)=(2,3).\\
    \end{cases}
\end{align*}
One may notice that the only way to possibly complete this diagram with a map $\vp_2^0: \bigwedge^1 R^3 \to [\bigwedge^0 S^2]_2$ and have any hope that it commutes is to set $\vp_2^0(e_i)=T_i^2$. Later, in Theorem \ref{Th-full lift} we will see that this is the correct choice to make the diagram commute, along with how to construct the maps for other $t$.
\end{Ex}

\begin{proof}[Proof of Theorem \ref{Th-base case}]
To show this diagram commutes we simply compute the two compositions of maps. Fix $A \subseteq \{1\ldots n\}$. Then,
\begin{align*}
    \delta(\vp_{r-1}^{r-1}(e_A))&=\delta((-1)^{r-1} \Delta_A^{r-2} \sum_{\substack{K\subseteq\{1\ldots n-1\} \\ \#K=r-1}} (-1)^{A+K} X_{A^c K^c} f_K)\\
    &=(-1)^{r-1} \Delta_A^{r-2} \sum_{\substack{K\subseteq\{1\ldots n-1\} \\ \#K=r-1}} (-1)^{A+K} X_{A^c K^c} \delta(f_K)\\
    &=(-1)^{r-1} \Delta_A^{r-2} \sum_{\substack{K\subseteq\{1\ldots n-1\} \\ \#K=r-1}} (-1)^{A+K} X_{A^c K^c} (\sum_{k\in K} \rho(\{k\}, K \sm \{k\}) F_k f_{K\sm \{k\}})\\
    &=(-1)^{r-1} \Delta_A^{r-2} \sum_{\substack{K\subseteq\{1\ldots n-1\} \\ \#K=r-1}} (-1)^{A+K} X_{A^c K^c} (\sum_{k\in K} \rho(\{k\}, K \sm \{k\}) (\sum_{i=1}^n x_{ik} T_i) f_{K\sm \{k\}})\\
    &=(-1)^{r-1} \Delta_A^{r-2} \sum_{i=1}^n T_i \sum_{\substack{K\subseteq\{1\ldots n-1\}\\ \#K=r-1}} \sum_{k\in K}  (-1)^{A+K} X_{A^c K^c} \rho(\{k\}, K\sm \{k\}) x_{ik} f_{K\sm \{k\}}\\
    &=(-1)^{r-1} \Delta_A^{r-2} \sum_{i=1}^n T_i \sum_{\substack{H\subseteq\{1\ldots n-1\}\\ \#H=r-2}} (-1)^{A+H} f_H \sum_{\alpha \in H^c} (-1)^{\alpha} \rho(\{\alpha\},H)x_{i\alpha}X_{A^c (H \cup \{\alpha\})^c}  \\
\end{align*}
Now by  Lemma \ref{Lemma-ids} (2) we know that $\rho(\{\alpha\},H^c\sm {\alpha}) \rho(\{\alpha\},H)=(-1)^{\alpha-1}$. Hence, $(-1)^{\alpha} \rho(\{\alpha\},H)=(-1)\rho(\{\alpha\},H^c\sm {\alpha})$. So the above is
\begin{align*}
   \phantom{\delta(\vp_{r-1}^{r-1}(e_A))} 
   &=(-1)^{r-1} \Delta_A^{r-2} \sum_{i=1}^n T_i \sum_{\substack{H\subseteq\{1\ldots n-1\}\\ \#H=r-2}} (-1)^{A+H} f_H \sum_{\alpha \in H^c} (-1) \rho(\{\alpha\},H^c\sm {\alpha})x_{i\alpha}X_{A^c (H \cup \{\alpha\})^c}\\
   &=(-1)^{r} \Delta_A^{r-2} \sum_{i=1}^n T_i \sum_{\substack{H\subseteq\{1\ldots n-1\}\\ \#H=r-2}} (-1)^{A+H} f_H \sum_{\alpha \in H^c} \rho(\{\alpha\},H^c\sm {\alpha})x_{i\alpha}X_{A^c (H \cup \{\alpha\})^c}
\end{align*}
Now applying Lemma \ref{Lemma-ids} (1) we get 
\begin{align*}
    \phantom{\delta(\vp_{r-1}^{r-1}(e_A))} 
    &=(-1)^{r} \Delta_A^{r-2} \sum_{i=1}^n T_i \sum_{\substack{H\subseteq\{1\ldots n-1\}\\ \#H=r-2}} (-1)^{A+H} f_H Y_{A,H,i}\\
    &=(-1)^{r} \Delta_A^{r-2} \sum_{\substack{H\subseteq\{1\ldots n-1\}\\ \#H=r-2}}(-1)^{A+H} f_H\sum_{i=1}^n Y_{A,H,i} T_i\\
    &=(-1)^{r} \Delta_A^{r-2} \sum_{\substack{H\subseteq\{1\ldots n-1\}\\ \#H=r-2}}(-1)^{A+H} f_H\sum_{i\in A} Y_{A,H,i} T_i.\\
\end{align*}
Here the last equality follows from the fact that $Y_{A,H,i}=0 $ if $i\not\in A$.

Now for the other composition,

\begin{align*}
    \vp_{r-1}^{r-2}(d(e_A))&=\vp_{r-1}^{r-2}( \sum_{\beta\in A} \rho(\{\beta\}, A\sm \{\beta\}) \Delta_\beta^{r-1} e_{A\sm\{\beta\}})\\
    &= \sum_{\beta\in A} \rho(\{\beta\}, A\sm \{\beta\}) \Delta_\beta^{r-1} \vp_{r-1}^{r-2}(e_{A\sm\{\beta\}})\\
    &=\sum_{\beta\in A} \rho(\{\beta\}, A\sm \{\beta\}) \Delta_\beta^{r-1} (-1)^{r} \Delta_{A\sm \{\beta\}}^{r-2} \sum_{b\in A\sm \{\beta\}} \frac{T_b}{\Delta_b} \sum_{\substack{H\subseteq\{1\ldots n-1\} \\ \#H=r-2}} (-1)^{A\sm \{\beta\}+H} X_{(A\sm \{\beta\})^c H^c} f_H \\
    &=(-1)^r \sum_{\substack{H\subseteq\{1\ldots n-1\} \\ \#H=r-2}}  (-1)^{A+H} f_H \sum_{\beta\in A} \sum_{b\in A\sm \{\beta\}} (-1)^{\beta}\frac{T_b}{\Delta_b}\rho(\{\beta\}, A\sm \{\beta\}) \Delta_A^{r-2} \Delta_{\beta}  X_{(A\sm \{\beta\})^c H^c}\\
    &=(-1)^r \Delta_A^{r-2} \sum_{\substack{H\subseteq\{1\ldots n-1\} \\ \#H=r-2}}  (-1)^{A+H} f_H \sum_{i\in A} \frac{T_i}{\Delta_i} \sum_{\gamma \in A\sm \{i\}} (-1)^{\gamma} \rho(\{\gamma\}, A\sm \{\gamma\})\Delta_{\gamma} X_{(A\sm \{\gamma\})^c H^c}
\end{align*}
Now, set
\[\Theta :=(-1)^r \Delta_A^{r-2} \sum_{\substack{H\subseteq\{1\ldots n-1\} \\ \#H=r-2}}  (-1)^{A+H} f_H.\] So,
\begin{align*}
    \vp_{r-1}^{r-2}(d(e_A))
    &= \Theta \cdot  \sum_{i\in A} \frac{T_i}{\Delta_i} \sum_{\gamma \in A\sm \{i\}} (-1)^{\gamma} \Delta_{\gamma} \rho(\{\gamma\}, A\sm \{\gamma\}) X_{(A\sm \{\gamma\})^c H^c}.
\end{align*}
We now write \[ X_{(A\sm \{\gamma\})^c H^c}=\rho(\{\gamma\}, A^c \sm \{\gamma\}) Y_{A,H,\gamma}=\rho(\{\gamma\}, A^c \sm \{\gamma\}) \sum_{\alpha\in H^c} \rho(\{\alpha\},H^c\sm \{\alpha\}) x_{\gamma \alpha} X_{A^c (H\cup {\alpha})^c}.\]
Hence by Lemma \ref{Lemma-ids} (2) we have 

\begin{align*}
 \rho(\{\gamma\}, A\sm \{\gamma\})X_{(A\sm \{\gamma\})^c H^c}&=  \rho(\{\gamma\}, A\sm \{\gamma\})\rho(\{\gamma\}, A^c \sm \{\gamma\}) \sum_{\alpha\in H^c} \rho(\{\alpha\},H^c\sm \{\alpha\}) x_{\gamma \alpha} X_{A^c (H\cup {\alpha})^c}   \\
 &=(-1)^{\gamma-1} \sum_{\alpha\in H^c} \rho(\{\alpha\},H^c\sm \{\alpha\}) x_{\gamma \alpha} X_{A^c (H\cup {\alpha})^c}.
\end{align*}
So, returning to the original expression,
\begin{align*}
   \vp_{r-1}^{r-2}(d(e_A))&= \Theta \cdot  \sum_{i\in A} \frac{T_i}{\Delta_i} \sum_{\gamma \in A\sm \{i\}} (-1)^{\gamma} \Delta_\gamma  (-1)^{\gamma-1} \sum_{\alpha\in H^c} \rho(\{\alpha\},H^c\sm \{\alpha\}) x_{\gamma \alpha} X_{A^c (H\cup {\alpha})^c}\\
    &=\Theta \cdot  (-1)\sum_{i\in A} \frac{T_i}{\Delta_i} \sum_{\alpha\in H^c} \rho(\{\alpha\},H^c\sm \{\alpha\}) X_{A^c (H\cup {\alpha})^c}\sum_{\gamma \in A\sm \{i\}} \Delta_\gamma  x_{\gamma \alpha}\\
\end{align*}
Using the fact that $\sum_{i=1}^n \Delta_i x_{i\alpha}=0$ we get that $\sum_{\gamma \in A\sm \{i\}} \Delta_\gamma  x_{\gamma \alpha}=-\sum_{\gamma \in A^c \cup \{i\}} \Delta_\gamma  x_{\gamma \alpha} $. Therefore the previous line becomes
\begin{align*}
\phantom{\vp_{r-1}^{r-2}(d(e_A))} 
    &=\Theta \cdot  (-1)\sum_{i\in A} \frac{T_i}{\Delta_i} \sum_{\alpha\in H^c} \rho(\{\alpha\},H^c\sm \{\alpha\}) X_{A^c (H\cup {\alpha})^c}((-1)\sum_{\gamma \in A^c \cup \{i\}} \Delta_\gamma  x_{\gamma \alpha})\\  
    &=\Theta \cdot  \sum_{i\in A} \frac{T_i}{\Delta_i} \sum_{\gamma \in A^c \cup \{i\}} \Delta_\gamma \sum_{\alpha\in H^c} \rho(\{\alpha\},H^c\sm \{\alpha\})   x_{\gamma \alpha} X_{A^c (H\cup {\alpha})^c}\\
\end{align*}
Using Lemma \ref{Lemma-ids} (1), we see that $\sum_{\alpha\in H^c} \rho(\{\alpha\},H^c\sm \{\alpha\})   x_{\gamma \alpha} X_{A^c (H\cup {\alpha})^c}=Y_{A,H,\gamma}$. Thus, we have
\begin{align*}
\phantom{\vp_{r-1}^{r-2}(d(e_A))}   
&=\Theta \cdot  \sum_{i\in A} \frac{T_i}{\Delta_i} \sum_{\gamma \in A^c \cup \{i\}} \Delta_\gamma Y_{A,H,\gamma}.\\
\end{align*}
Finally, using that $Y_{A,H,\gamma}=0$ for $\gamma\in A^c$, the expression simplifies to 
\begin{align*}
\phantom{\vp_{r-1}^{r-2}(d(e_A))}   
&=\Theta \cdot  \sum_{i\in A} \frac{T_i}{\Delta_i} \Delta_i Y_{A,H,i}\\
&=(-1)^r \Delta_A^{r-2} \sum_{\substack{H\subseteq\{1\ldots n-1\} \\ \#H=r-2}}  (-1)^{A+H} f_H \sum_{i\in A} \frac{T_i}{\Delta_i} \Delta_i Y_{A,H,i}\\
&=(-1)^r \Delta_A^{r-2} \sum_{\substack{H\subseteq\{1\ldots n-1\} \\ \#H=r-2}}  (-1)^{A+H} f_H \sum_{i\in A}  Y_{A,H,i}T_i.
\end{align*}
We have shown that 
\[\vp_{r-1}^{r-2}(d(e_A))=(-1)^r \Delta_A^{r-2} \sum_{\substack{H\subseteq\{1\ldots n-1\} \\ \#H=r-2}}  (-1)^{A+H} f_H \sum_{i\in A}  Y_{A,H,i} T_i  \]
and 
\[\delta(\vp_{r-1}^{r-1}(e_A))
    =(-1)^{r} \Delta_A^{r-2} \sum_{\substack{H\subseteq\{1\ldots n-1\}\\ \#H=r-2}}(-1)^{A+H} f_H\sum_{i\in A} Y_{A,H,i} T_i\]
    so the commutativity of the diagram is proven.

\end{proof}

\begin{Cor}\label{Cor-extended base case} Suppose $\vp_{r-1}^{r-1}$ and $\vp_{r-1}^{r-2}$ are the maps defined in Theorem \ref{Th-base case}. Consider the following two squares of Diagram \eqref{Diagram-lift} where $t=r-1$.

\[\begin{tikzcd}
0=\bigwedge^{r} (0)^{n-1} \ar{r} &\bigwedge^{r-1} (R1)^{n-1} \ar{r}{\delta} & \bigwedge^{r-2} (\bigoplus RT_i)^{n-1} \\
\bigwedge^{r+1} R^n \ar{r}{d^{r+1}_{r-1}}\ar{u}{0} &\bigwedge^r R^n \ar{r}{d^r_{r-1}} \ar{u}{\vp_{r-1}^{r-1}}& \bigwedge^{r-1} R^n \ar{u}{\vp_{r-1}^{r-2}}
\end{tikzcd}\]
This diagram commutes.
\end{Cor}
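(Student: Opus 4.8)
The plan is to deduce Corollary \ref{Cor-extended base case} from Theorem \ref{Th-base case} together with exactness of the Koszul resolution of $\Rees(I)$, rather than by a fresh computation. With $d^r_{r-1}\colon\bigwedge^r R^n\to\bigwedge^{r-1}R^n$ the differential of $K_\bullet([\Delta_1^{r-1}\ldots\Delta_n^{r-1}];R)$, the right-hand square of the displayed diagram is exactly the square of Theorem \ref{Th-base case}, so it already commutes. For the left-hand square, observe that its top-left corner $\bigwedge^r(0)^{n-1}$ is the zero module: in the $T$-grading on $S$ the generators of $\bigwedge^r S^{n-1}$ sit in degree $r$, so $[\bigwedge^r S^{n-1}]_{r-1}=0$. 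Hence both composites from $\bigwedge^{r+1}R^n$ to $\bigwedge^{r-1}(R1)^{n-1}$ through the top-left corner are zero, and commutativity of the left square is equivalent to the single identity $\vp_{r-1}^{r-1}\circ d^{r+1}_{r-1}=0$, where $d^{r+1}_{r-1}\colon\bigwedge^{r+1}R^n\to\bigwedge^r R^n$ is the Koszul differential of $K_\bullet([\Delta_1^{r-1}\ldots\Delta_n^{r-1}];R)$.

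To prove that identity I would post-compose with $\delta$. Using the right square (Theorem \ref{Th-base case}) and then $d^r_{r-1}\circ d^{r+1}_{r-1}=0$,
\[
\delta\circ\vp_{r-1}^{r-1}\circ d^{r+1}_{r-1}
=\vp_{r-1}^{r-2}\circ d^r_{r-1}\circ d^{r+1}_{r-1}
=0,
\]
so the image of $\vp_{r-1}^{r-1}\circ d^{r+1}_{r-1}$ lands in $\ker\delta$. Now $\bigwedge^{r-1}(R1)^{n-1}=[\bigwedge^{r-1}S^{n-1}]_{r-1}$ is the leftmost nonzero term of the $(r-1)$-st linear strand of $K_\bullet([F_1\ldots F_{n-1}];S)$ (the term to its left, $[\bigwedge^r S^{n-1}]_{r-1}$, is $0$ as above, and $1\le r-1\le n-1$ since $2\le r\le n$). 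Because $F_1,\ldots,F_{n-1}$ is a regular sequence in $S$ — equivalently $\Rees(I)\cong S/(F_1,\ldots,F_{n-1})$ is a complete intersection — the Koszul complex $K_\bullet([F_1\ldots F_{n-1}];S)$ is acyclic, and passing to the degree-$(r-1)$ graded strand gives a free resolution of $[\Rees(I)]_{r-1}\cong I^{r-1}$; in particular this strand is exact at its top term, i.e.\ $\delta$ is injective. Combining, $\vp_{r-1}^{r-1}\circ d^{r+1}_{r-1}=0$, which is the commutativity of the left square, and the corollary follows.

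The one step that needs genuine care is the injectivity of $\delta$: one has to match $\bigwedge^{r-1}(R1)^{n-1}$ with the top homological term of the $(r-1)$-st strand of $K_\bullet([F_1\ldots F_{n-1}];S)$ and invoke acyclicity of the Koszul complex of the regular sequence $F_1,\ldots,F_{n-1}$, using that graded components of an exact complex of graded modules stay exact. If one preferred to avoid this homological input, there is a direct but messier alternative: expanding $\vp_{r-1}^{r-1}(d^{r+1}_{r-1}(e_B))$ with the sign bookkeeping of Theorem \ref{Th-base case} reduces the claim to showing that $\sum_{\beta\in B}\rho(\{\beta\},B\sm\{\beta\})(-1)^\beta\,\Delta_\beta\,X_{(B\sm\{\beta\})^c K^c}=0$ for all $B$ with $\#B=r+1$ and all $K$ with $\#K=r-1$, which is a Laplace/Plücker-type vanishing; I would present the homological argument instead.
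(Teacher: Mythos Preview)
Your proof is correct and follows the same approach as the paper: both arguments reduce the left square to showing $\vp_{r-1}^{r-1}\circ d^{r+1}_{r-1}=0$, then use Theorem~\ref{Th-base case} together with $d^2=0$ to land the image in $\ker\delta$, and finish with the injectivity of $\delta$ coming from the top row being the tail of the resolution of $I^{r-1}$. Your write-up is more detailed (you spell out why $\delta$ is injective via acyclicity of the Koszul complex on the regular sequence $F_1,\ldots,F_{n-1}$), but the argument is the same.
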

\begin{proof}
This follows from the injectivity of $\delta$, since the the top row is the tail of a resolution of $I^{r-1}$, and Theorem \ref{Th-base case}: We have that $\im(\vp_{r-1}^{r-1} \circ d_{r-1}^{r+1}) \subseteq \ker \delta =0$.
\end{proof}

\begin{Not}
Let $s_e(y_1,\ldots, y_d)$ be the complete homogeneous symmetric function of degree $e$ in $y_1,\ldots, y_d$. For $A=\{a_1,\ldots, a_d\}\subseteq \{1,\ldots, n\}$ define $h_e(A)=s_e(\frac{T_{a_1}}{\Delta_{a_1}},\cdots,\frac{T_{a_d}}{\Delta_{a_d}}).$ 
\end{Not}

\begin{Th}\label{Th-full lift}
For $r>1$ let $\vp^{r-1}_{r-1}$ be the maps defined in Theorem \ref{Th-base case} and let $\vp_0^0:\bigwedge^0 R^n \to [\bigwedge^{0} S^{n-1} ]_{0}$ be the map $\vp_0^0(1)=1$. Then for all $t,r\geq 1$ define functions $\vp^{r-1}_t: \bigwedge^r R^n \to [\bigwedge^{r-1} S^{n-1} ]_{t-r+1}$ as follows:
\[ \vp^{r-1}_t(e_A):= \begin{cases} 
      0 & t< r-1, \\
      \vp^{r-1}_{r-1}(e_A) & t=r-1, \\
      \vp^{r-1}_{r-1}(e_A)(\Delta_A^{t-r+1} h_{t-r+1}(A)) & t>r-1. 
   \end{cases}
\]
(Note that this is the same definition of $\vp_{r-1}^{r-2}$ as in Theorem \ref{Th-base case}). Then 
\[ \begin{tikzcd}
\cdots \ar{r}&\left[\bigwedge^{r-1} S^{n-1} \right]_{t-r+1}\ar{r}{\delta^{r-1}} & \cdots \ar{r}& \left[\bigwedge^1 S^{n-1} \right]_{t-1}\ar{r}{\delta^{1}} & \left[S\right]_t\ar{r}{\delta^{0}}& I^t \\
\cdots \ar{r}&\bigwedge^{r} R^n \ar{r}{d_t^r} \ar{u}{\vp_t^{r-1}}& \cdots \ar{r}& \bigwedge^2 R^n \ar{r}{d_t^2} \ar{u}{\vp_t^1}& \bigwedge^1 R^n \ar{r}{d_t^1} \ar{u}{\vp_t^0}& (\Delta_1^t,\ldots,\Delta_n^t) \ar[hookrightarrow]{u} \\
\end{tikzcd}
\]
commutes, where the rightmost map is the natural inclusion, the top row is the $t$-th strand of the $K_\bullet([F_1\ldots F_{n-1}];S)$ and the bottom row is $K_\bullet([\Delta_1^t\ldots \Delta_n^t];R)$.
\end{Th}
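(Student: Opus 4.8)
The plan is to verify commutativity of each square of the diagram in the statement by induction on $t$, with Theorem~\ref{Th-base case} and Corollary~\ref{Cor-extended base case} as the base case. For the square lying over $\bigwedge^{r}R^{n}\xrightarrow{d_{t}^{r}}\bigwedge^{r-1}R^{n}$, the least $t$ for which all four modules are nonzero is $t=r-1$, and that square is precisely Theorem~\ref{Th-base case}; the adjacent square in which one vertical arrow is forced to be zero is Corollary~\ref{Cor-extended base case}, and for smaller $t$ every relevant module vanishes. So it remains to treat the step $t\geq r$, together with the augmentation square over $\bigwedge^{1}R^{n}\to(\Delta_{1}^{t},\dots,\Delta_{n}^{t})\hookrightarrow I^{t}$. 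The latter is immediate from the definitions: $\vp_{t}^{0}(e_{i})=T_{i}^{t}$, and under $[S]_{t}=[\bigwedge^{0}S^{n-1}]_{t}\twoheadrightarrow[\Rees(I)]_{t}=I^{t}$ one has $T_{i}\mapsto\Delta_{i}$, hence $T_{i}^{t}\mapsto\Delta_{i}^{t}$, which is exactly the image of $e_{i}$ under $d_{t}^{1}$ followed by the inclusion.

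For the remaining squares fix $A\subseteq\{1,\dots,n\}$ with $\#A=r$ and compute both composites on $e_{A}$. Since $\vp_{t}^{r-1}(e_{A})=\bigl(\Delta_{A}^{\,t-r+1}h_{t-r+1}(A)\bigr)\cdot\vp_{r-1}^{r-1}(e_{A})$ where the first factor is a genuine element of $S=R[T]$, and $\delta^{r-1}$ is $S$-linear, Theorem~\ref{Th-base case} gives
\[
\delta^{r-1}\!\bigl(\vp_{t}^{r-1}(e_{A})\bigr)=(-1)^{r}\,\Delta_{A}^{\,t-1}\,h_{t-r+1}(A)\sum_{\substack{H\subseteq\{1,\dots,n-1\}\\ \#H=r-2}}(-1)^{A+H}f_{H}\sum_{i\in A}Y_{A,H,i}\,T_{i}.
\]

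For the other composite, expand $d_{t}^{r}(e_{A})=\sum_{\beta\in A}\rho(\{\beta\},A\sm\{\beta\})\Delta_{\beta}^{t}e_{A\sm\{\beta\}}$, insert the definition of $\vp_{t}^{r-2}$, collapse $\Delta_{\beta}^{t}\Delta_{A\sm\{\beta\}}^{t-1}=\Delta_{\beta}\Delta_{A}^{t-1}$, rewrite $X_{(A\sm\{\beta\})^{c}H^{c}}=\rho(\{\beta\},A^{c})\,Y_{A,H,\beta}$, and apply Lemma~\ref{Lemma-ids}(2); these are the same minor manipulations used in the proof of Theorem~\ref{Th-base case}, and they yield
\[
\vp_{t}^{r-2}\!\bigl(d_{t}^{r}(e_{A})\bigr)=(-1)^{r-1}\,\Delta_{A}^{\,t-1}\sum_{\substack{H\subseteq\{1,\dots,n-1\}\\ \#H=r-2}}(-1)^{A+H}f_{H}\sum_{\beta\in A}\Delta_{\beta}\,h_{t-r+2}(A\sm\{\beta\})\,Y_{A,H,\beta}.
\]
Since the $f_{H}$ form a basis, equality of these two elements is equivalent to the identity
\[
\sum_{\beta\in A}\Bigl(\Delta_{\beta}\,h_{t-r+2}(A\sm\{\beta\})+T_{\beta}\,h_{t-r+1}(A)\Bigr)Y_{A,H,\beta}=0
\]
for every such $A$ and $H$.

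To finish, use the recursion for complete homogeneous symmetric functions obtained by omitting the variable $T_{\beta}/\Delta_{\beta}$, namely $h_{t-r+2}(A)=h_{t-r+2}(A\sm\{\beta\})+(T_{\beta}/\Delta_{\beta})h_{t-r+1}(A)$, which rearranges to $\Delta_{\beta}h_{t-r+2}(A\sm\{\beta\})+T_{\beta}h_{t-r+1}(A)=\Delta_{\beta}h_{t-r+2}(A)$. The displayed sum then equals $h_{t-r+2}(A)\sum_{\beta\in A}\Delta_{\beta}Y_{A,H,\beta}$, so it suffices to show $\sum_{\beta\in A}\Delta_{\beta}Y_{A,H,\beta}=0$. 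This is a Hilbert--Burch identity: $Y_{A,H,\beta}=0$ whenever $\beta\in A^{c}$, since the defining matrix of $Y_{A,H,\beta}$ then has a repeated row, so the sum may be taken over all $\beta\in\{1,\dots,n\}$; as $Y_{A,H,\beta}$ is linear in the row $(x_{\beta\alpha})_{\alpha\in H^{c}}$, the sum is the determinant of the matrix with that row replaced by $\bigl(\sum_{\beta=1}^{n}\Delta_{\beta}x_{\beta\alpha}\bigr)_{\alpha}$, which is the zero row by the syzygy $\sum_{\beta}\Delta_{\beta}x_{\beta\alpha}=0$ satisfied by the signed maximal minors, whence the determinant vanishes. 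I expect the main obstacle to be exactly this symmetric-function bookkeeping --- carrying the factors $h_{t-r+1}(A)$ through both computations and recognizing that their discrepancy telescopes, via the recursion above, onto the single combinatorial identity $\sum_{\beta\in A}\Delta_{\beta}Y_{A,H,\beta}=0$ --- since the underlying minor algebra is a replay of the proof of Theorem~\ref{Th-base case}.
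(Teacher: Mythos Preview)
Your proof is correct, and the overall strategy (reduce each square to the base cases, then for $t\geq r$ use the recursion for complete homogeneous symmetric functions together with a Hilbert--Burch type vanishing) is the same as the paper's. The execution, however, is noticeably more direct than the paper's.

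The paper, after using $S$-linearity and Theorem~\ref{Th-base case} as you do, does not write $\delta^{r-1}(\vp_t^{r-1}(e_A))$ in the closed form $\sum_i Y_{A,H,i}T_i$; instead it re-expands $\vp_{r-1}^{r-2}(d_{r-1}^r(e_A))$ in terms of the opaque maps $\vp_{r-2}^{r-2}$, applies the recursion $h_{e+1}(A)=(T_\beta/\Delta_\beta)h_e(A)+h_{e+1}(A\sm\{\beta\})$, and then twice invokes the ``key identity'' $\sum_{\alpha\in A}\rho(\{\alpha\},A\sm\{\alpha\})\Delta_\alpha^{r-2}\vp_{r-2}^{r-2}(e_{A\sm\{\alpha\}})=0$, which it obtains indirectly from Corollary~\ref{Cor-extended base case} (i.e., from the injectivity of $\delta$ at the tail of the top complex). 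Your route bypasses this: by writing both composites explicitly in terms of the determinants $Y_{A,H,\beta}$, one application of the same recursion collapses the discrepancy to $h_{t-r+2}(A)\sum_{\beta\in A}\Delta_\beta Y_{A,H,\beta}$, and you then prove $\sum_{\beta\in A}\Delta_\beta Y_{A,H,\beta}=0$ directly from the syzygy $\sum_\beta \Delta_\beta x_{\beta\alpha}=0$ (the same relation the paper itself uses inside the proof of Theorem~\ref{Th-base case}). Unwinding the definitions, the paper's key identity is exactly this vanishing written at the level of $\vp_{r-2}^{r-2}$, so the two arguments rest on the same content; yours simply reaches it in one step rather than via the detour through Corollary~\ref{Cor-extended base case}.
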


Again, before proving this theorem lets return to Example \ref{ex-lift1}.

\begin{Ex}\label{ex-lift2} Suppose $n=3$ and $t=2$ then Diagram \eqref{Diagram-lift} becomes:
\[\begin{tikzcd}
  0\ar{r}& \left[\bigwedge^2 S^2 \right]_0 \ar{r}& \left[\bigwedge^1 S^2 \right]_1 \ar{r}& \left[\bigwedge^0 S^{2}\right]_2 \ar{r} & I^2 \ar{r} & 0\\
  0 \ar{r}& \bigwedge^3 R^3 \ar{r} \ar{u}{\vp^2_2}& \bigwedge^3 R^2 \ar{r} \ar{u}{\vp^1_2}& \bigwedge^1 R^3 \ar{r} \ar{u}{\vp^0_2}& (\Delta_1^2,\Delta_2^2,\Delta_3^2) \ar[hookrightarrow]{u} \ar{r}&0. 
\end{tikzcd}
\]
We saw in Example \ref{ex-lift1} that
\begin{align*}
    \vp_2^2(e_1 \wedge e_2 \wedge e_3)&= (-1)^2 \Delta_1  \Delta_2 \Delta_3( (-1)^{(6+3)}f_1 \wedge f_2)\\
    &=- \Delta_1  \Delta_2 \Delta_3(f_1 \wedge f_2).
\end{align*}  
Now using Theorem \ref{Th-full lift} we compute $\vp_2^0$ and $\vp_2^1$: $\vp_0^0(e_a)=1$ and $h_2(\{a\})=\frac{T_a^2}{\Delta_a^2}$, so 
\[\vp_2^0(e_a)=(1)(\Delta_a^2)\left(\frac{T_a^2}{\Delta_a^2} \right)=T_a^2.\]
For $\vp_2^1$ we first need to compute $\vp_1^1$:
\begin{align*}
    \vp_1^1(e_a\wedge e_b)&=(-1)^{2-1} (\Delta_a \Delta_b)^{2-2}((-1)^{a+b+1} X_{\{a,b\}^c,1}f_1 +(-1)^{a+b+1} X_{\{a,b\}^c,1}f_2)\\
    &=\begin{cases}  -( x_{3,2} f_1 -x_{3,1} f_2 )& (a,b)=(1,2)\\
    - (-x_{2,2} f_1 + x_{2,1} f_2)& (a,b)=(1,3)\\
    -( x_{1,2} f_1  -x_{1,1} f_2)& (a,b)=(2,3)\\
    \end{cases}.
\end{align*}
Now $h_{1}(\{a,b\})=\frac{T_a}{\Delta_a}+\frac{T_b}{\Delta_b}$ and we have,
\begin{align*}
    \vp_2^1(e_a\wedge e_b)&= \vp_1^1(e_a \wedge e_b)(\Delta_a \Delta_b)\left( \frac{T_a}{\Delta_a}+\frac{T_b}{\Delta_b}\right)\\
    &=\vp_1^1(e_a \wedge e_b)(\Delta_b T_a+ \Delta_a T_b)\\
    &=\begin{cases}  -(\Delta_2 T_1+ \Delta_1 T_2)( x_{3,2} f_1 -x_{3,1} f_2 )& (a,b)=(1,2)\\
    -(\Delta_3 T_1+ \Delta_1 T_3) (-x_{2,2} f_1 + x_{2,1} f_2)& (a,b)=(1,3)\\
    -(\Delta_3 T_2+ \Delta_2 T_3)( x_{1,2} f_1  -x_{1,1} f_2)& (a,b)=(2,3)\\
    \end{cases},
\end{align*}
which agrees with the computation in Example \ref{ex-lift1}.
\end{Ex}

\begin{proof}[Proof of Theorem \ref{Th-full lift}]
The commutativity of the rightmost square is immediate so we are done once we show that for all $r\geq 2$ the following square commutes:
\[\begin{tikzcd}
\left[ \bigwedge^{r-1} S^{n-1} \right]_{t-r+1} \ar{r}{\delta^{r-1}} & \left[ \bigwedge^{r-2} S^{n-1} \right]_{t-r+2} \\
\bigwedge^r R^n \ar{r}{d^r_t} \ar{u}{\vp_{t}^{r-1}}& \bigwedge^{r-1} R^n \ar{u}{\vp_{t}^{r-2}}
\end{tikzcd}.\]
If $t<r-2$ the top row vanishes and both vertical maps are zero, so commutativity is clear. The case that $t=r-2$ is addressed by Corollary \ref{Cor-extended base case}. Finally, the case where $t=r-1$ is handled by Theorem \ref{Th-base case}. So it is left to check the cases $t>r-1$.\\
Before computing the two compositions we note the following key identity. Let $A\subseteq \{1,\ldots, n\}$ with $\#A=r$. Using Corollary \ref{Cor-extended base case} we have that: 
\begin{equation}\label{key id}
    \begin{aligned}
    0&=\vp_{r-2}^{r-2}(d_{r-2}^{r})(e_A)\\
    &=\vp_{r-2}^{r-2}(\sum_{\alpha\in A} \rho(\{\alpha\},A\sm \{\alpha\})\Delta_\alpha^{r-2} e_{A\sm \alpha} )\\
    &=\sum_{\alpha\in A} \rho(\{\alpha\},A\sm \{\alpha\})\Delta_\alpha^{r-2} \vp_{r-2}^{r-2}( e_{A\sm \alpha} ).
    \end{aligned}
\end{equation}

Now we are ready to show that the square commutes. Let $A\subseteq \{1\ldots n\}$ with $\#A=r$ and set $e=t-r+1$. Then,
\begin{align*}
    \delta^{r-1}(\vp_t^{r-1}(e_A)) &= \delta^{r-1}(\vp^{r-1}_{r-1}(e_A)(\Delta_A^{e} h_{e}(A))) \\
    &=\Delta_A^{e} h_{e}(A)\delta^{r-1}(\vp^{r-1}_{r-1}(e_A)),
\end{align*}
where the second equality follows from the $S$-linearity of $\delta$. Now using Theorem \ref{Th-base case} we have,
\begin{align*}
    \phantom{\delta^{r-1}(\vp_t^{r-1}(e_A))} &= \Delta_A^{e} h_{e}(A) \vp_{r-1}^{r-2}(d_{r-1}^r(e_A))\\
    &=\Delta_A^{e} h_{e}(A) \vp_{r-1}^{r-2}(\sum_{\alpha\in A} \rho(\{\alpha\},A\sm\{\alpha\}) \Delta_\alpha^{r-1} e_{A\sm \{\alpha\}})\\
    &=\Delta_A^{e} h_{e}(A) \sum_{\alpha\in A}\rho(\{\alpha\},A\sm\{\alpha\}) \Delta_\alpha^{r-1} \vp_{r-1}^{r-2}( e_{A\sm \{\alpha\}})\\
    &=\Delta_A^{e} h_{e}(A) \sum_{\alpha\in A}\rho(\{\alpha\},A\sm\{\alpha\}) \Delta_\alpha^{r-1} (\vp_{r-2}^{r-2}( e_{A\sm \{\alpha\}})\frac{\Delta_A}{\Delta_\alpha} h_1(A\sm\{\alpha\}))\\
    &=\Delta_A^{e+1} h_{e}(A) \sum_{\alpha\in A}\rho(\{\alpha\},A\sm\{\alpha\}) \Delta_\alpha^{r-2} \vp_{r-2}^{r-2}( e_{A\sm \{\alpha\}}) h_1(A\sm\{\alpha\})\\
    &=\Delta_A^{e+1} \sum_{\alpha\in A}\rho(\{\alpha\},A\sm\{\alpha\}) \Delta_\alpha^{r-2} \vp_{r-2}^{r-2}( e_{A\sm \{\alpha\}}) \sum_{\beta\in A\sm\{\alpha\}} \frac{T_\beta}{\Delta_\beta} h_e(A).
\end{align*}
Now apply the fact that for $\beta \in A$, $h_{e+1}(A)=\frac{T_\beta}h_e(A)+h_{e+1}(A\sm \{\beta\})$ to see that the above is 
\begin{align*}
     \phantom{\delta^{r-1}(\vp_t^{r-1}(e_A))} &=\Delta_A^{e+1} \sum_{\alpha\in A}\rho(\{\alpha\},A\sm\{\alpha\}) \Delta_\alpha^{r-2} \vp_{r-2}^{r-2}( e_{A\sm \{\alpha\}}) \sum_{\beta\in A\sm\{\alpha\}} (h_{e+1}(A)-h_{e+1}(A\sm \{\beta\}))\\
     &=\Delta_A^{e+1} \sum_{\alpha\in A}\rho(\{\alpha\},A\sm\{\alpha\}) \Delta_\alpha^{r-2} \vp_{r-2}^{r-2}( e_{A\sm \{\alpha\}}) \sum_{\beta\in A\sm\{\alpha\}} h_{e+1}(A)\\
     &\qquad \qquad -\Delta_A^{e+1} \sum_{\alpha\in A}\rho(\{\alpha\},A\sm\{\alpha\}) \Delta_\alpha^{r-2} \vp_{r-2}^{r-2}( e_{A\sm \{\alpha\}}) \sum_{\beta\in A\sm\{\alpha\}} h_{e+1}(A\sm \{\beta\}))
\end{align*}
But $\sum_{\beta\in A\sm\{\alpha\}} h_{e+1}(A)=(\#A-1)h_{e+1}(A)=(r-1)h_{e+1}(A)$. So by identity \eqref{key id}, 
\begin{align*}
    0&=\Delta_A^{e+1} \sum_{\alpha\in A}\rho(\{\alpha\},A\sm\{\alpha\}) \Delta_\alpha^{r-2} \vp_{r-2}^{r-2}( e_{A\sm \{\alpha\}}) \sum_{\beta\in A\sm\{\alpha\}} h_{e+1}(A)\\
    &=(r-1)\Delta_A^{e+1}  h_{e+1}(A)\sum_{\alpha\in A}\rho(\{\alpha\},A\sm\{\alpha\}) \Delta_\alpha^{r-2} \vp_{r-2}^{r-2}( e_{A\sm \{\alpha\}}).
\end{align*}

Hence we have
\begin{align*}
    \phantom{\delta^{r-1}(\vp_t^{r-1}(e_A))}&=-\Delta_A^{e+1} \sum_{\alpha\in A}\rho(\{\alpha\},A\sm\{\alpha\}) \Delta_\alpha^{r-2} \vp_{r-2}^{r-2}( e_{A\sm \{\alpha\}}) \sum_{\beta\in A\sm\{\alpha\}} h_{e+1}(A\sm \{\beta\}))\\
    &=-\Delta_A^{e+1} \sum_{\alpha\in A} h_{e+1}(A\sm \alpha) \sum_{\beta\in A\sm\{\alpha\}} 
    \rho(\{\beta\},A\sm\{\beta\}) \Delta_\beta^{r-2} \vp_{r-2}^{r-2}( e_{A\sm \{\beta\}}).
\end{align*}
By identity \eqref{key id}, we see that $0=\sum_{\beta\in A\sm\{\alpha\}} 
    \rho(\{\beta\},A\sm\{\beta\}) \Delta_\beta^{r-2} \vp_{r-2}^{r-2}( e_{A\sm \{\beta\}})+\rho(\{\alpha\},A\sm\{\alpha\}) \Delta_\alpha^{r-2} \vp_{r-2}^{r-2}( e_{A\sm \{\alpha\}})$. So the above expression is equal to
\begin{align*}
    \phantom{\delta^{r-1}(\vp_t^{r-1}(e_A))}&=-\Delta_A^{e+1} \sum_{\alpha\in A} h_{e+1}(A\sm \alpha) (-\rho(\{\alpha\},A\sm\{\alpha\}) \Delta_\alpha^{r-2} \vp_{r-2}^{r-2}( e_{A\sm \{\alpha\}}))\\
    &= \sum_{\alpha\in A}  \rho(\{\alpha\},A\sm\{\alpha\}) \Delta_\alpha^{r-2} \vp_{r-2}^{r-2}( e_{A\sm \{\alpha\}}) \Delta_A^{e+1} h_{e+1}(A\sm \alpha)\\
    &=\sum_{\alpha\in A}  \rho(\{\alpha\},A\sm\{\alpha\}) \Delta_\alpha^{r-2} \vp_{r-2}^{r-2}( e_{A\sm \{\alpha\}})  \left(\frac{\Delta_A}{\Delta_\alpha}\right)^{e+1}  h_{e+1}(A\sm \alpha)\\
    &=\sum_{\alpha\in A}  \rho(\{\alpha\},A\sm\{\alpha\}) \Delta_\alpha^{r-2+e+1} \vp_{r-2}^{r-2}( e_{A\sm \{\alpha\}}) \Delta_{A\sm \alpha} ^{e+1}  h_{e+1}(A\sm \alpha)\\
    &=\sum_{\alpha\in A}  \rho(\{\alpha\},A\sm\{\alpha\}) \Delta_\alpha^{t} \vp_t^{r-2}(e_{A\sm \{\alpha \}})\\
    &=\vp_t^{r-2}(\sum_{\alpha\in A}  \rho(\{\alpha\},A\sm\{\alpha\}) \Delta_\alpha^{t} e_{A\sm \{\alpha \}})\\
    &=\vp_t^{r-2}(d_t^r(e_A)).
\end{align*}
\end{proof}

The results of this section are highly specialized to the case that $X$ is size $n\times(n-1)$, in all other cases the Rees algebra of the ideal of maximal minors is substantially less nice and it is much more difficult to access a resolution of $I^t$, cf. \cite{Akin-Buchsbaum-Weyman81Resolutionsofdeterminantalideals:Thesubmaximalminors}. However, this is the only specialized aspect of this argument. Due to the elementary computational nature of the proof, Theorem \ref{Th-full lift} holds for any grade 2 perfect ideal of linear type with mild assumptions assumptions on the ambient ring. 
%{\color{red} MAYBE SPLIT INTO BOXED REMARK?? \\MILD ASSUMPTIONS =NOETHERIAN + UNIVERSALLY CATENARY +-EPSILON}

%%%%%%%%%%%%%%%%%%%%%%%%%%%%%%%%%%%%%%%%%%%%%

\section{The $n\times (n-1)$ Case}\label{section-n times n-1 full section }
For this section let $X$ be a $n\times (n-1)$ matrix of indeterminates, $R=\CC[X]$ and $I=I_{n-1}(X)$. We write $d_i$ for the determinant of the matrix obtained by deleting the $i$-th row of $X$. As noted in Section \ref{section-Gl invariant ideals} $R\cong \Sym(F\otimes G)$ where $F=\CC^n$, $G=\CC^{n-1}$ and $\Gl=\Gl_n\times \Gl_{n-1}$ acts on $R$.

\subsection{The Cyclic Local Cohomology Module}\label{section-spec to n times (n-1)}
By Proposition \ref{prop-cyclic R module, general case}, we have that $H_\maxm^{(n-1)^2-1}(R/I^t)= H_\maxm^{n^2-2n}(R/I^t)$ is a cyclic $R$-module. Define $J_t$ to be the ideal such that

\[H_\maxm^{n^2-2n}(R/I^t)\cong R/J_t.\]

We will utilize the lift constructed in Section \ref{section-lift} to describe the modules $\Ext_R ^n(R/I^t,R)$ as submodules of $H_I^n(R)$. After constructing an isomorphism of $D$-modules $H_I^n(R)\to H_m^{n(n-1)}(R)$ we obtain a description of $\Ext_R ^n(R/I^t,R)$ as a submodule of $H_m^{n(n-1)}(R)$ which we can use to directly compute $\ann_R \Ext_R ^n(R/I^t,R)=J_t$.

\subsection{Description of $\Ext_R ^n(R/I^t,R)$}\label{subsetction-description of EXT}
Let $\Delta_i=(-1)^i d_i$ and $S=\Sym(R^{n-1})$. Then we have the following commutative diagram:

\begin{equation}\label{Diagram-top cohom desription}
    \begin{tikzcd}
 R\left[\frac{1}{\Delta_1},\ldots, \frac{1}{\Delta_n}\right]\ar{r}& H^n( \check{C}^\bullet(\Delta_1,\ldots, \Delta_n; R))\cong  H_I^n(R)\ar{r}& 0\\
R \ar{r} \ar{u}{\alpha \mapsto \frac{\alpha}{(\prod \Delta_i)^t}} & \ar{u}H^n(K^\bullet (\Delta_1^t,\ldots, \Delta_n^t;R)) \ar{r} & 0\\
&\Ext_R^n(R/I^t,R) \ar{u}{\psi_t} & 
 \end{tikzcd}
\end{equation}
By Corollary \ref{Cor-union}, the composition of vertical maps on the right is injective. Moreover $\psi_t$ is induced by the map $\vp_t^{n-1}: R\cong \bigwedge^n R^n\to [\bigwedge^{n-1} (S)^{n-1}]_{t-n}\cong [S]_{t-n+1}$ described in Theorem \ref{Th-base case} and Theorem \ref{Th-full lift}. This map is zero for $t<n-1$. For $t=n-1$, we have $\vp_{n-1}^{n-1}$, and hence $\psi_{n-1}$ is multiplication by the constant:
\[(-1)^{n-1} (\prod_{i=1}^n \Delta_i)^{n-2}.\]
Thus, the image of $\Ext_R^n(R/I^{n-1},R)$ is generated by $\frac{(\prod_{i=1}^n \Delta_i)^{n-2}}{(\prod_{i=1}^n \Delta_i)^{n-1}}=\frac{1}{\prod_{i=1}^n \Delta_i}$ in $H_I^n(R)$. For $t\geq n$ we see that for $|\alpha|=t-n+1$,
\[\psi_t(T^\alpha)= (-1)^{n-1} (\prod_{i=1}^n \Delta_i)^{n-2} 
(\prod_{i=1}^n \Delta_i)^{t-n+1}\frac{1}{\Delta^\alpha}=(-1)^{n-1} (\prod_{i=1}^n \Delta_i)^{t-1} \frac{1}{\Delta^\alpha}.\]
Since $d_i$ and $\Delta_i$ agree up to sign the above discussion proves the following:
\begin{Th}\label{Th-Ext gens}
Under the embedding $\Ext_R^n(R/I^t,R) \hookrightarrow H_I^n(R) $ of Diagram \eqref{Diagram-top cohom desription}, $\Ext_R^n(R/I^t,R)$ is the submodule generated by $$\left\{\frac{1}{\prod_{i=1}^n d_i} \cdot  \frac{1}{d^\alpha} \right\}_{|\alpha|=t-n+1}.$$ 
\end{Th}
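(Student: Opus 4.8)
The plan is to assemble the statement directly from the commutative diagram \eqref{Diagram-top cohom desription} together with the explicit formulas for the lift $\vp_t^{n-1}$ provided by Theorem \ref{Th-base case} and Theorem \ref{Th-full lift}. The top row of that diagram is the standard comparison between the Koszul cohomology $H^n(K^\bullet(\Delta_1^t,\ldots,\Delta_n^t;R))$ and the \v{C}ech cohomology $H^n(\check C^\bullet(\Delta_1,\ldots,\Delta_n;R))\cong H_I^n(R)$; the cohomology class of $\alpha\in R$ sitting in the top spot of the Koszul complex is sent to $\frac{\alpha}{(\prod_i\Delta_i)^t}\in R[\tfrac{1}{\Delta_1},\ldots,\tfrac{1}{\Delta_n}]$, which is exactly the left vertical map drawn in \eqref{Diagram-top cohom desription}. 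By Corollary \ref{Cor-union} (equivalently Theorem \ref{Th-RWW injectivity of ext}), the composite $\Ext_R^n(R/I^t,R)\to H_I^n(R)$ is injective, so it suffices to identify the image of the generators of $\Ext_R^n(R/I^t,R)$, and for this we only need to know $\psi_t$ on a generating set of $H^n(K^\bullet(\Delta_1^t,\ldots,\Delta_n^t;R))\cong [S]_{t-n+1}$, namely the monomials $T^\alpha$ with $|\alpha|=t-n+1$.

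First I would record that $\psi_t$ is induced by $\vp_t^{n-1}\colon \bigwedge^n R^n\cong R\to [\bigwedge^{n-1}S^{n-1}]_{t-n+1}\cong[S]_{t-n+1}$, which by construction in Section \ref{section-lift} fits into the bottom square of \eqref{Diagram-top cohom desription}. For $t<n-1$ this map is zero and there is nothing to prove (indeed $\Ext_R^n(R/I^t,R)=0$, matching $|\alpha|=t-n+1<0$). For $t=n-1$, Theorem \ref{Th-base case} gives $\vp_{n-1}^{n-1}(e_{\{1,\ldots,n\}}) = (-1)^{n-1}\Delta_{\{1,\ldots,n\}}^{n-2}\cdot f_{\emptyset}$ (the sum over $K$ of size $n-1$ collapses since $K$ ranges over subsets of $\{1,\ldots,n-1\}$, forcing $K=\{1,\ldots,n-1\}$ and $X_{\emptyset,\emptyset}=1$ up to the sign $(-1)^{A+K}$, which I would check evaluates to $1$ here). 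Hence $\psi_{n-1}$ is multiplication by the constant $(-1)^{n-1}(\prod_{i=1}^n\Delta_i)^{n-2}$, so the image is generated by $\frac{(-1)^{n-1}(\prod\Delta_i)^{n-2}}{(\prod\Delta_i)^{n-1}}=\pm\frac{1}{\prod_{i=1}^n\Delta_i}$ in $H_I^n(R)$, which is the claimed generator for $t=n-1$ (where $|\alpha|=0$, so $d^\alpha=1$).

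For $t\geq n$, I would invoke the formula in Theorem \ref{Th-full lift}: $\vp_t^{n-1}(e_{\{1,\ldots,n\}})=\vp_{n-1}^{n-1}(e_{\{1,\ldots,n\}})\cdot\big(\Delta_A^{t-n+1}h_{t-n+1}(A)\big)$ with $A=\{1,\ldots,n\}$, and $h_{t-n+1}(A)=s_{t-n+1}(\tfrac{T_1}{\Delta_1},\ldots,\tfrac{T_n}{\Delta_n})$. Expanding this complete homogeneous symmetric function as $\sum_{|\alpha|=t-n+1}\frac{T^\alpha}{\Delta^\alpha}$, the component of $\psi_t$ hitting the basis element $T^\alpha$ is multiplication by $(-1)^{n-1}(\prod\Delta_i)^{n-2}\cdot(\prod\Delta_i)^{t-n+1}\cdot\frac{1}{\Delta^\alpha}=(-1)^{n-1}(\prod\Delta_i)^{t-1}\frac{1}{\Delta^\alpha}$; applying the left vertical map of \eqref{Diagram-top cohom desription} (dividing by $(\prod\Delta_i)^t$) yields the class $\pm\frac{1}{\prod_{i=1}^n\Delta_i}\cdot\frac{1}{\Delta^\alpha}\in H_I^n(R)$. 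Finally, replacing each $\Delta_i$ by $d_i$ only changes signs (and signs are units), so the image of $\Ext_R^n(R/I^t,R)$ is precisely the $R$-submodule of $H_I^n(R)$ generated by $\{\tfrac{1}{\prod_i d_i}\cdot\tfrac{1}{d^\alpha}\}_{|\alpha|=t-n+1}$, as asserted. The only genuinely delicate point is bookkeeping: confirming that the monomials $T^\alpha$ do generate $H^n(K^\bullet(\Delta_1^t,\ldots,\Delta_n^t;R))$ as an $R$-module (this is the usual description of top Koszul cohomology as $R/(\Delta_1^t,\ldots,\Delta_n^t)$-free-of-rank-one, with the $R$-action on $H_I^n(R)$ then produced by the comparison map), and tracking the signs $(-1)^{A+K}$ through the base-case evaluation; everything else is a direct substitution into the already-proven formulas.
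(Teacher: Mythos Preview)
Your proposal is correct and follows essentially the same argument as the paper: unwind $\psi_t$ via the explicit formula for $\vp_t^{n-1}$ from Theorems \ref{Th-base case} and \ref{Th-full lift}, note the collapse of the sum in the base case $t=n-1$, expand $h_{t-n+1}$ as $\sum_{|\alpha|=t-n+1}T^\alpha/\Delta^\alpha$ for $t\geq n$, divide by $(\prod\Delta_i)^t$, and absorb the sign discrepancy between $\Delta_i$ and $d_i$. One small notational slip: the monomials $T^\alpha$ index the $R$-generators of $\Ext_R^n(R/I^t,R)$ via the dual of the top free module $[S]_{t-n+1}$ of the resolution of $R/I^t$ (not generators of $H^n(K^\bullet(\Delta_1^t,\ldots,\Delta_n^t;R))$, which is cyclic), and $f_K$ in the base case is $f_{\{1,\ldots,n-1\}}$ rather than $f_\emptyset$; neither affects the computation.
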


Recall that $H_I^n(R)$ is a cyclic $\DD$-module. The following result allows us to describe the images of the modules $\Ext_R ^n(R/I^t,R)$ in $H_I^n(R)$ in a manner related to the $\DD$-module structure of $H_I^n(R)$.

\begin{Prop}\label{prop-Cayley type}\cite[Remark 3.8]{Lorincz-Raicu-Walther-Weyman17Bernstein-Satopolynomialsformaximalminorsandsub-maximalPfaffians}
\cite{Lorincz17Theb-functionsofquiversemi-invariants}
Let $\underline s=(s_1,\ldots, s_n)$ and $s=\sum s_i$. For each $i$, we have 
\[d_i^* \bullet (d_i\cdot d^{\underline s})=(s_i+1)(s+2)(s+3)\cdots(s+n)d^{\underline s}\]
\end{Prop}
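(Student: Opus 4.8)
The plan is to recognise the identity as an instance of Cayley's ``Omega'' identity and to extract it by passing to a bordered square matrix (it suffices to treat $\underline s\in\ZZ_{\geq 0}^n$, since both sides are polynomial in $\underline s$). Adjoin $n$ new variables $y_1,\dots,y_n$ and let $\tilde X=[\,X\mid y\,]$ be the $n\times n$ matrix obtained by appending the column $y=(y_1,\dots,y_n)^T$; put $D:=\det\tilde X$. Laplace expansion along the last column gives $D=\sum_{i=1}^n(-1)^{i+n}y_i\,d_i$, and expanding the constant-coefficient operator $\det(\partial_{\tilde X})$ along the same column (all the partials commute) gives
\[\det(\partial_{\tilde X})=\sum_{i=1}^n(-1)^{i+n}\,\partial_{y_i}\,d_i^*,\]
with $d_i^*$ the operator appearing in the statement. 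The classical Cayley identity for the $n\times n$ matrix $\tilde X$ reads $\det(\partial_{\tilde X})\bullet D^{m}=m(m+1)\cdots(m+n-1)\,D^{\,m-1}$ for all $m\ge 1$.

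First I would obtain the identity summed over $i$. Applying $\det(\partial_{\tilde X})$ to $D^{s+1}$ and using that $D$ is linear in each $y_i$ with $\partial_{y_i}\bullet D=(-1)^{i+n}d_i$, we get $\partial_{y_i}\bullet D^{s+1}=(s+1)(-1)^{i+n}d_i\,D^{s}$; since $d_i^*$ involves only the variables $\partial_{x_{jk}}$, it commutes with $\partial_{y_i}$, so the $i$-th summand of $\det(\partial_{\tilde X})\bullet D^{s+1}$ equals $(s+1)\,d_i^*\bullet(d_i D^{s})$. Comparing with Cayley's identity gives $\sum_{i=1}^n d_i^*\bullet(d_iD^{s})=(s+2)(s+3)\cdots(s+n)\,D^{s}$, and extracting the coefficient of $y_1^{s_1}\cdots y_n^{s_n}$ (so $s=\sum_i s_i$) yields $\sum_{i=1}^n d_i^*\bullet(d_i\,d^{\underline s})=(s+2)(s+3)\cdots(s+n)\,d^{\underline s}$. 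This is the displayed formula summed over $i$, as $\sum_i(s_i+1)=s+n$ promotes the last factor to complete the product.

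To isolate the $i$-th term, fix $i$ and let $M$ be the $(n-1)\times(n-1)$ submatrix of $X$ on the rows $\ne i$, so that $d_i=\det M$, $d_i^*=\det(\partial_M)$ and $d_i\,d^{\underline s}=\det(M)^{s_i+1}\cdot g$ with $g:=\prod_{j\ne i}d_j^{s_j}$. Expanding each $d_j$ ($j\ne i$) along its row $i$ exhibits $g$ as a polynomial in the entries of $M$ (with coefficients polynomial in the row-$i$ entries, which $\det(\partial_M)$ treats as scalars). Applying the Leibniz rule for the order-$(n-1)$ operator $\det(\partial_M)$ to $\det(M)^{s_i+1}g$, the term with all differentiations falling on $\det(M)^{s_i+1}$ contributes $(s_i+1)(s_i+2)\cdots(s_i+n-1)\,d^{\underline s}$ by Cayley's identity for $M$ — this is also the $s_j=0$ ($j\ne i$) specialisation, which serves as the base case of an induction on $\sum_{j\ne i}s_j$ — while the remaining terms are governed by the lower-order cofactor operators of $\det(\partial_M)$ hitting $g$, together with the cofactor/Plücker relations among the minors of $X$ of the kind collected in Lemma~\ref{Lemma-ids}. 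Assembling these cross-terms so that each factor $(s_i+j)$ with $j\ge 2$ is promoted to $(s+j)$ is the crux; this is precisely the computation carried out, in the language of $b$-functions of semi-invariants, in \cite[Remark~3.8]{Lorincz-Raicu-Walther-Weyman17Bernstein-Satopolynomialsformaximalminorsandsub-maximalPfaffians} and \cite{Lorincz17Theb-functionsofquiversemi-invariants}. I expect this bookkeeping of the cofactor cross-terms to be the main obstacle; if only the dependence of the scalar on $s$ is needed rather than its exact value, the summed form above together with the $S_n$-symmetry permuting the rows of $X$ already determines it.
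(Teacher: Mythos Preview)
The paper does not prove this proposition; it is quoted with citations to \cite{Lorincz-Raicu-Walther-Weyman17Bernstein-Satopolynomialsformaximalminorsandsub-maximalPfaffians} and \cite{Lorincz17Theb-functionsofquiversemi-invariants} and used as a black box. So there is no in-paper argument to compare your attempt against---the paper's ``proof'' is the citation itself.

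Your bordering trick correctly yields the summed identity $\sum_i d_i^*\bullet(d_i\,d^{\underline s})=(s+2)\cdots(s+n)\,d^{\underline s}$, but the passage to the individual $i$-th term has a genuine gap. You never verify that $d_i^*\bullet(d_i\,d^{\underline s})$ is a scalar multiple of $d^{\underline s}$ at all (it is, by a column-$\Gl_{n-1}$ plus row-torus weight argument, but this needs to be said before one can speak of ``the scalar'' $c_i(\underline s)$). More seriously, your Leibniz/cofactor sketch is not carried out: you flag the cross-term bookkeeping as ``the crux'' and then defer back to the very references the paper already cites, so nothing is established beyond the sum. Your fallback claim---that the summed identity plus $S_n$-symmetry ``already determines'' each $c_i$---is false: for $n=3$ set $h_i(\underline s)=s_i(s_j+s_k)-2s_js_k$ with $\{i,j,k\}=\{1,2,3\}$; this is $S_3$-equivariant of degree $\le n-1=2$, satisfies $\sum_i h_i=0$ and $h_i|_{s_j=s_k=0}=0$, so one can add any multiple of $h_i$ to $c_i$ without violating any of your constraints. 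An additional input (for instance, that $c_i$ depends only on $s_i$ and the total $s=\sum_j s_j$, which is what the prehomogeneous/$b$-function machinery in the cited references supplies) is needed to pin down the formula.
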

This proposition immediately gives us the following.

\begin{Prop}\label{prop-Ext diff ops}
Under the embedding induced by Diagram \eqref{Diagram-top cohom desription}, for $t\geq n-1$, we have
\[\Ext_R ^n(R/I^t,R)=\sum_{|\alpha|=t-n+1} R \cdot (d^\alpha)^* \bullet \frac{1}{\prod_{i=1}^n d_i} . \]
\end{Prop}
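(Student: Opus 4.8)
The plan is to combine the explicit generators from Theorem~\ref{Th-Ext gens} with the differential-operator formula of Proposition~\ref{prop-Cayley type}. By Theorem~\ref{Th-Ext gens}, under the embedding of Diagram~\eqref{Diagram-top cohom desription}, the module $\Ext_R^n(R/I^t,R)$ is the $R$-submodule of $H_I^n(R)$ generated by the classes $\tfrac{1}{\prod_i d_i}\cdot\tfrac{1}{d^\alpha}$ for $|\alpha|=t-n+1$. So it suffices to show that for each such $\alpha$ the class $\tfrac{1}{\prod_i d_i}\cdot\tfrac{1}{d^\alpha}$ and the class $(d^\alpha)^*\bullet\tfrac{1}{\prod_i d_i}$ generate the same $R$-submodule of $H_I^n(R)$; in fact I expect them to be equal up to a nonzero scalar, which is enough. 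The natural move is induction on $|\alpha|$.

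First I would set up notation: write $\tfrac{1}{\prod_i d_i}\cdot\tfrac{1}{d^\alpha} = d^{\underline{s}}$ inside $H_I^n(R)$ where $\underline{s}$ is the weight with $s_i = -1-\alpha_i$, so that $s := \sum s_i = -n-|\alpha|$, and note these negative-power ``monomials'' in the $d_i$ make sense as \v{C}ech classes. For the base case $|\alpha|=0$ the statement is trivial since $(d^{\underline 0})^* = 1$. For the inductive step, suppose $(d^\beta)^*\bullet\tfrac{1}{\prod_i d_i}$ is a nonzero scalar multiple of $\tfrac{1}{\prod_i d_i}\cdot\tfrac{1}{d^\beta}$ for all $\beta$ with $|\beta| = k$, and let $|\alpha| = k+1$. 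Pick any index $i$ with $\alpha_i > 0$ and write $\alpha = \beta + \varepsilon_i$. Then $(d^\alpha)^* = d_i^*\,(d^\beta)^*$ (the dual-operator map is a ring map, and the $d_i^*$ commute among themselves), so
\[
(d^\alpha)^*\bullet\frac{1}{\prod_j d_j} = d_i^*\bullet\Bigl((d^\beta)^*\bullet\frac{1}{\prod_j d_j}\Bigr) = c_\beta\, d_i^*\bullet\Bigl(\frac{1}{\prod_j d_j}\cdot\frac{1}{d^\beta}\Bigr)
\]
by the inductive hypothesis, where $c_\beta\neq 0$. Now I want to apply Proposition~\ref{prop-Cayley type}. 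Writing $\tfrac{1}{\prod_j d_j}\cdot\tfrac{1}{d^\beta} = d_i\cdot d^{\underline{s}}$ with $\underline{s}$ the weight having $s_j = -1-\beta_j$ for $j\ne i$ and $s_i = -1-\beta_i - 1 = -2-\beta_i$ (so that $d_i\cdot d^{\underline s}$ indeed equals $\tfrac{1}{\prod_j d_j d^\beta}$), Proposition~\ref{prop-Cayley type} gives
\[
d_i^*\bullet\bigl(d_i\cdot d^{\underline{s}}\bigr) = (s_i+1)(s+2)(s+3)\cdots(s+n)\, d^{\underline{s}},
\]
and $d^{\underline s} = \tfrac{1}{\prod_j d_j}\cdot\tfrac{1}{d^\alpha}$ since $s_j = -1-\alpha_j$ for all $j$ (including $j=i$, where $-2-\beta_i = -1-\alpha_i$). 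So the result is a scalar multiple of the desired generator, and I must check that scalar is nonzero.

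The main obstacle, and essentially the only nontrivial point, is verifying that the scalar $(s_i+1)(s+2)(s+3)\cdots(s+n)$ does not vanish for the relevant weights. Here $s = \sum_j s_j$, and since $s_j = -1-\alpha_j$ with $\alpha_j\ge 0$, we get $s = -n - |\alpha| = -n - (t-n+1) = -(t+1)$, so the factors are $s+\ell = \ell - t - 1$ for $\ell = 2,\dots,n$; since $t\ge n-1$ we have $\ell - t - 1 \le n - (n-1) - 1 = 0$ — so one of these factors can in fact be zero! Wait: at this point in the induction $\alpha$ is an intermediate multi-index with $|\alpha| = k+1 \le t-n+1$, so $s = -(n+k+1)$ and $s+\ell = \ell - n - k - 1 < 0$ for $1 \le \ell \le n$, hence each factor $s+2,\dots,s+n$ is strictly negative and nonzero; likewise $s_i + 1 = -1-\beta_i < 0$. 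Thus the scalar is a nonzero product and the induction closes. I would state this nonvanishing cleanly as a small separate observation, then conclude that $R\cdot(d^\alpha)^*\bullet\tfrac{1}{\prod d_i} = R\cdot\tfrac{1}{\prod d_i}\cdot\tfrac{1}{d^\alpha}$ for each $\alpha$ with $|\alpha| = t-n+1$, and summing over all such $\alpha$ and invoking Theorem~\ref{Th-Ext gens} yields $\Ext_R^n(R/I^t,R) = \sum_{|\alpha|=t-n+1} R\cdot(d^\alpha)^*\bullet\tfrac{1}{\prod_i d_i}$, as claimed.
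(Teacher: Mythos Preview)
Your argument is correct and follows exactly the route the paper intends: the paper simply records Proposition~\ref{prop-Ext diff ops} as an immediate consequence of Proposition~\ref{prop-Cayley type} (together with Theorem~\ref{Th-Ext gens}), and what you have done is spell out that ``immediate'' step via induction on $|\alpha|$ and verify that the scalar $(s_i+1)(s+2)\cdots(s+n)$ is nonzero at every stage. Your self-correction on the nonvanishing---realizing that the relevant $|\alpha|$ in the inductive step is $k+1$ rather than always $t-n+1$---is exactly the point that needs checking, and your computation $s+\ell=\ell-n-k-1<0$ for $2\le\ell\le n$ and $s_i+1=-1-\beta_i<0$ settles it.
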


By Theorem \ref{Th-LSW} the $\DD$-modules $H_I^n(R)$ and $H_\maxm^{n(n-1)}(R)$ are isomorphic cyclic $D$-modules. To describe a $\DD$-isomorphism between them it is sufficient to choose a socle generator of $H_I^n(R)$ and of $H_\maxm^{n(n-1)}(R)$. Choose 
\[\frac{1}{\prod_{i=1}^n d_i} \in  \Soc (H_I^n(R))\] and
\[\frac{1}{\underline{x}}:=\frac{1}{\prod_{ij} x_{ij}}\in \Soc (H_\maxm^{n(n-1)}(R)).\] We observe the image of $\Ext_R ^n(R/I^t,R)$ in $H_\maxm^{n(n-1)}(R)$ under the map induced by $\frac{1}{\prod_{i=1}^n d_i} \mapsto \frac{1}{\underline{x}}$.

\begin{Prop}\label{prop-Ext in H_m}
For $t\geq n-1$, we have
\[\Ext_R ^n(R/I^t,R)\cong \sum_{|\alpha|=t-n+1} R \cdot (d^\alpha)^* \bullet \frac{1}{\underline{x}}, \]
where we write $\frac{1}{\underline{x}}$ for the class in $H_\maxm^{n(n-1)}(R)$.
\end{Prop}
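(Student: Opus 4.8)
The plan is to transport the description of $\Ext_R^n(R/I^t,R)$ inside $H_I^n(R)$ obtained in Proposition \ref{prop-Ext diff ops} across the $\DD$-module isomorphism
\[
\Phi\colon H_I^n(R) \xrightarrow{\ \sim\ } H_\maxm^{n(n-1)}(R)
\]
furnished by Theorem \ref{Th-LSW}, where $\Phi$ is the (unique, since both are cyclic $\DD$-modules with one-dimensional socle) isomorphism normalized by $\Phi\bigl(\tfrac{1}{\prod_i d_i}\bigr)=\tfrac{1}{\underline x}$. The key point is simply that $\Phi$, being $\DD$-linear, commutes with the action of every differential operator, in particular with each operator $(d^\alpha)^*\in\DD$ and with multiplication by every $r\in R$. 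Hence applying $\Phi$ to the equality
\[
\Ext_R^n(R/I^t,R)=\sum_{|\alpha|=t-n+1} R\cdot (d^\alpha)^*\bullet \frac{1}{\prod_{i=1}^n d_i}
\]
of $R$-submodules of $H_I^n(R)$ yields
\[
\Phi\bigl(\Ext_R^n(R/I^t,R)\bigr)=\sum_{|\alpha|=t-n+1} R\cdot (d^\alpha)^*\bullet \Phi\Bigl(\frac{1}{\prod_{i=1}^n d_i}\Bigr)=\sum_{|\alpha|=t-n+1} R\cdot (d^\alpha)^*\bullet \frac{1}{\underline x},
\]
which is exactly the claimed description once we record that $\Phi$ restricted to $\Ext_R^n(R/I^t,R)$ is an $R$-module isomorphism onto its image.

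Concretely I would proceed in three short steps. First, invoke Theorem \ref{Th-LSW} to get a degree-preserving $\DD$-module isomorphism $H_I^n(R)\cong H_\maxm^{n(n-1)}(R)$, and note that both modules are simple (equivalently, cyclic with socle $\CC$) so that any $\DD$-isomorphism is determined up to a nonzero scalar by where it sends a socle generator; choose the one sending $\tfrac{1}{\prod_i d_i}$ to $\tfrac{1}{\underline x}$. (One should check $\tfrac{1}{\prod_i d_i}$ does generate the socle of $H_I^n(R)$: this is standard for the top \v{C}ech class, and is the content of the remark preceding Proposition \ref{prop-Ext in H_m}.) Second, observe that a $\DD$-linear map is in particular $R$-linear and intertwines the action of any fixed $\psi\in\DD$; apply this to $\psi=(d^\alpha)^*$. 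Third, feed in Proposition \ref{prop-Ext diff ops} and push the sum of cyclic $R$-submodules through $\Phi$ term by term, using $R$-linearity to pull the coefficients $r\in R$ outside $\Phi$ and $\DD$-linearity to move $(d^\alpha)^*$ past $\Phi$.

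There is essentially no serious obstacle here; the proposition is a formal consequence of Proposition \ref{prop-Ext diff ops} together with the $\DD$-linearity of the isomorphism of Theorem \ref{Th-LSW}. The only point that deserves a sentence of care is the well-definedness and naturality of the chosen isomorphism: one must be sure that the map $\tfrac{1}{\prod_i d_i}\mapsto\tfrac{1}{\underline x}$ really does extend to a $\DD$-module isomorphism (this uses that $H_I^n(R)$ is generated over $\DD$ by $\tfrac{1}{\prod_i d_i}$ and that the assignment kills the annihilator, both of which follow from Theorem \ref{Th-LSW} since the annihilator of a socle generator in a simple $\DD$-module is intrinsic). Once that is in hand, the displayed chain of equalities above is immediate, and it remains only to phrase the conclusion as an isomorphism of $R$-modules $\Ext_R^n(R/I^t,R)\cong\sum_{|\alpha|=t-n+1} R\cdot (d^\alpha)^*\bullet \tfrac{1}{\underline x}$, which is precisely what Proposition \ref{prop-Ext in H_m} asserts.
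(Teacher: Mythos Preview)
Your proposal is correct and follows exactly the same approach as the paper: the paper does not give a separate proof of this proposition but rather observes, immediately before stating it, that one transports Proposition \ref{prop-Ext diff ops} across the $\DD$-module isomorphism $H_I^n(R)\cong H_\maxm^{n(n-1)}(R)$ of Theorem \ref{Th-LSW} normalized by $\frac{1}{\prod_i d_i}\mapsto\frac{1}{\underline{x}}$. Your write-up simply makes explicit the $\DD$-linearity argument that the paper leaves implicit.
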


\begin{Ex}
Let $n=t=3$. Then,
\[\Ext_R ^3(R/I^3,R)\cong \sum_{i=1}^3 R \cdot (d_i)^* \bullet \frac{1}{x_{1,1}x_{1,2}x_{2,1}x_{2,2}x_{3,1}x_{3,2}}.\]
Thus, $\Ext_R ^3(R/I^3,R)\subseteq H_\maxm^{n(n-1)}(R)$ is generated as an $R$-module by
\[
    \frac{1}{\underline{x}}\left( \frac{1}{x_{2,1}x_{3,2}}- \frac{1}{x_{2,2}x_{3,1}} \right),\]
    \[
        \frac{1}{\underline{x}}\left( \frac{1}{x_{1,1}x_{3,2}}- \frac{1}{x_{1,2}x_{3,1}} \right),\]
        and
            \[\frac{1}{\underline{x}}\left( \frac{1}{x_{1,1}x_{2,2}}- \frac{1}{x_{1,2}x_{2,1}} \right).\]
\end{Ex}

Using this description $\Ext_R ^n(R/I^t,R)$, we can utilize the $\DD$-module structure of $H_\maxm^{n(n-1)}(R)$ to describe the annihilator of $\Ext_R ^n(R/I^t,R)$. Recall from Section \ref{section-equivariant pairing} that $R^*=\CC[\{\partial_{ij}\}]$ and for a polynomial $f\in R$ we write $f^*=f(\{\partial_{ij}\})\in R^*$. For an element $f\in R$ we can form the $R^*$ module generated by $f$, where $R^*$ acts by differentiation.

\begin{Prop}\label{prop-ann of Ext as R^*}
Let $t\geq n-1$. Then 
\[(\ann_R \Ext_R ^n(R/I^t,R))^*=\ann_{R^*} \sum_{|\alpha|=t-n+1} R^* \cdot d^\alpha.\]
\end{Prop}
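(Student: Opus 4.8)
The plan is to use the embedding of $\Ext^n_R(R/I^t,R)$ into $H:=H_\maxm^{n(n-1)}(R)$ from Proposition~\ref{prop-Ext in H_m} and transport the $R$-action on $H$ to an operation on $R^*$ via the ``free'' structure of the class $\eta:=\tfrac1{\underline x}$. Recall that $H$ is a cyclic $\DD$-module generated by $\eta$, that $x_{ij}\bullet\eta=0$ for every variable $x_{ij}$ (lowering a denominator to exponent $0$ kills the class), and that the classes $(x^\gamma)^*\bullet\eta$ are nonzero scalar multiples of the standard monomial basis $\underline x^{-\gamma-\mathbf 1}$ of $H$. Consequently $H$ is a free $R^*$-module of rank one on $\eta$; in particular the $\CC$-linear map $R^*\to H,\ \psi\mapsto\psi\bullet\eta$, is bijective, equivalently $g^*\bullet\eta=0$ in $H$ if and only if $g=0$ in $R$. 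By Proposition~\ref{prop-Ext in H_m}, then, an element $f\in R$ annihilates $\Ext^n_R(R/I^t,R)$ precisely when $f\cdot\bigl((d^\alpha)^*\bullet\eta\bigr)=0$ for all $\alpha$ with $|\alpha|=t-n+1$.

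The key lemma I would establish is the commutation identity
\[f\cdot\bigl(g^*\bullet\eta\bigr)=(-1)^{\deg f}\,\bigl(f^*\bullet g\bigr)^*\bullet\eta\qquad(f\in R\text{ homogeneous},\ g\in R).\]
I would prove it by bilinearity reducing to $f=x^\beta$ a monomial and inducting on $|\beta|$, the base step being a single variable: from the Weyl-algebra relation $[x_{ij},\partial_{kl}]=-\delta_{ik}\delta_{jl}$ one gets $x_{ij}h^*=h^*x_{ij}-\partial_{\partial_{ij}}h^*$ in $\DD$, where $\partial_{\partial_{ij}}$ denotes differentiation of polynomials in the $\partial_{kl}$ with respect to $\partial_{ij}$; applying this to $\eta$, the term $h^*x_{ij}$ drops out because $x_{ij}\bullet\eta=0$, and $\partial_{\partial_{ij}}h^*=\bigl(\partial_{ij}\bullet h\bigr)^*$ because $(-)^*$ intertwines $\partial/\partial x_{ij}$ on $R$ with $\partial/\partial\partial_{ij}$ on $R^*$. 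Thus $x_{ij}\cdot(h^*\bullet\eta)=-(\partial_{ij}\bullet h)^*\bullet\eta$, and iterating gives the identity. Since $\Ext^n_R(R/I^t,R)$ is graded, its $R$-annihilator is a homogeneous ideal, and $(-)^*$ is degree-preserving, so there is no loss in restricting to homogeneous $f$ and the sign $(-1)^{\deg f}$ plays no role.

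With this identity available, the argument will close up quickly. For homogeneous $f$, $f\cdot\bigl((d^\alpha)^*\bullet\eta\bigr)=\pm\bigl(f^*\bullet d^\alpha\bigr)^*\bullet\eta$, which by injectivity of $\psi\mapsto\psi\bullet\eta$ vanishes if and only if $f^*\bullet d^\alpha=0$ in $R$. Hence $f\in\ann_R\Ext^n_R(R/I^t,R)$ if and only if $f^*\bullet d^\alpha=0$ for every $\alpha$ with $|\alpha|=t-n+1$. On the other hand, $\ann_{R^*}\!\bigl(\sum_{|\alpha|=t-n+1}R^*\cdot d^\alpha\bigr)=\bigcap_\alpha\ann_{R^*}(R^*\cdot d^\alpha)$, and for the cyclic module $R^*\cdot d^\alpha$ over the commutative ring $R^*$ one has $\psi\in\ann_{R^*}(R^*\cdot d^\alpha)$ if and only if $\psi\bullet d^\alpha=0$ (one direction by evaluating on $1\in R^*$, the other since $\psi\bullet(h^*\bullet d^\alpha)=h^*\bullet(\psi\bullet d^\alpha)$). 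Combining, $f\in\ann_R\Ext^n_R(R/I^t,R)$ if and only if $f^*\in\ann_{R^*}\!\bigl(\sum_{|\alpha|=t-n+1}R^*\cdot d^\alpha\bigr)$; applying the ring isomorphism $(-)^*\colon R\to R^*$ to the ideal $\ann_R\Ext^n_R(R/I^t,R)$ will then yield the asserted equality.

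The main obstacle is the commutation identity together with its sign and normalization bookkeeping — in particular pinning down that multiplication by $x_{ij}$ on $H$ corresponds under $\psi\mapsto\psi\bullet\eta$ to $-\partial/\partial\partial_{ij}$ on $R^*$, and matching this against differentiation of the polynomial $d^\alpha(\partial)\in R^*$ — and being sure that no nonzero constant-coefficient operator annihilates $\eta$, i.e.\ injectivity of $\psi\mapsto\psi\bullet\eta$. Everything after that is formal.
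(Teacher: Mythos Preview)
Your argument is correct and reaches the same conclusion as the paper, but the mechanism you use to pass from the $R$-action on $H_\maxm^{n(n-1)}(R)$ to the $R^*$-action on polynomials is different. The paper observes that $\ann_\DD(\eta)=\DD\cdot\maxm$, so $f(d^\alpha)^*\bullet\eta=0$ iff $f(d^\alpha)^*\in\DD\cdot\maxm$, and then applies the Fourier transform $x_{ij}\mapsto\partial_{ij}$, $\partial_{ij}\mapsto -x_{ij}$ of the Weyl algebra, which carries $\DD\cdot\maxm$ to $\DD\cdot\maxm^*$ and $f(d^\alpha)^*$ to (a scalar multiple of) $f^*d^\alpha$; since $\DD/\DD\maxm^*\cong R$ with cyclic generator $1$, membership in $\DD\cdot\maxm^*$ is exactly the vanishing $f^*\bullet d^\alpha=0$. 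Your route instead proves the commutation identity $f\cdot(g^*\bullet\eta)=(-1)^{\deg f}(f^*\bullet g)^*\bullet\eta$ directly from $[x_{ij},\partial_{kl}]=-\delta_{ik}\delta_{jl}$ and the fact that $\maxm\bullet\eta=0$, and then uses that $R^*\to H$, $\psi\mapsto\psi\bullet\eta$, is injective. This is really the Fourier transform unpacked by hand: your identity is precisely what that automorphism encodes when applied to elements of the form $f\cdot g^*$ acting on $\eta$. The paper's version is shorter and more conceptual if one already has the Fourier transform available; yours is more self-contained and makes the sign and normalization bookkeeping explicit, which is exactly the ``main obstacle'' you flagged. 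Either way the logical content is the same.
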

\begin{proof}
Let $\zeta=\frac{1}{\underline x}\in H_\maxm^{n(n-1)}(R)$ then $H_\maxm^{n(n-1)}(R)=\DD \cdot\zeta$ and $\ann_\DD \zeta= \DD\cdot\maxm$. Now $f\in \ann_R \Ext_R ^n(R/I^t,R)$ if and only if for all $|\alpha|=t-n+1$ we have $f {d^\alpha}^* \bullet \zeta =0 $. Now $f {d^\alpha}^* \bullet \zeta =0$ if and only if $f {d^\alpha}^* \in \DD\cdot\maxm$. So, applying the Fourier transform which sends $x_{ij}\mapsto \partial_{ij}$ ,$\partial_{ij}\mapsto -x_{ij}$, we have that $f {d^\alpha}^* \in \DD\cdot\maxm$ if and only if $f^* d^\alpha \in \DD\cdot (\maxm^*)$ if and only if $f^* \bullet d^\alpha =0$. Hence $f\in \ann_R \Ext_R ^n(R/I^t,R)$ if and only if $f^* \in \ann_{R^*} \sum_{|\alpha|=t-n+1} R^* \cdot d^\alpha$.
\end{proof}

\subsection{The annihilator of $\Ext_R ^n(R/I^t,R)$}
Recall from Section \ref{section-equivariant pairing} that for all $k\geq 0$ there exists a $\Gl$-equivariant pairing $\langle \;,\; \rangle: [R^*]_k \times [R]_k \to \CC$ induced by differentiation.

\begin{Prop}\label{prop-GL orbit kills minors}
Let $k\geq 1$, $\lambda=(k+1)$ and $N=[I^k]_{(n-1)k}=\sum_{|\alpha|=k}  \CC \cdot d^\alpha$. Then for all $f$ in the $\Gl$-orbit of $\det_\lambda$, $f^*\bullet N=0$.
\end{Prop}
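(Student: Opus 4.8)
The plan is to reduce the statement to a single differential operator by equivariance, and then finish with an elementary degree count.

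First I would make $\det_\lambda$ explicit. For $\lambda=(k+1)\in\PP_{n-1}$ the transpose partition is $\lambda'=(1^{k+1})$, so by the definition in Section~\ref{section-Gl invariant ideals} we have $\det_\lambda=\prod_{i=1}^{k+1}\det_1=x_{11}^{k+1}$, and hence $\det_\lambda^{*}=\partial_{11}^{k+1}$. By Lemma~\ref{lemma-orbit kils equivariant subspace}, once we know that $N$ is a $\Gl$-invariant subspace of $[R]_{(n-1)k}$ and that $\det_\lambda^{*}\bullet N=0$, the whole $\Gl$-orbit of $\det_\lambda$ annihilates $N$, which is exactly the claim. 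The invariance of $N$ is immediate: $N=[I^k]_{(n-1)k}$ is a graded component of the $\Gl$-invariant ideal $I^k$ (Example~\ref{Ex-Gl ideal}), so it is a $\Gl$-subrepresentation of $[R]_{(n-1)k}$.

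It therefore remains to check that $\partial_{11}^{k+1}\bullet d^\alpha=0$ for every $\alpha=(\alpha_1,\ldots,\alpha_n)$ with $|\alpha|=k$. For this I would bound the degree of $d^\alpha$ in the single variable $x_{11}$: the minor $d_1$ is the determinant of the submatrix of $X$ obtained by deleting row $1$, so $x_{11}$ does not occur in $d_1$ at all; for $i\geq 2$, the minor $d_i$ is a determinant one of whose rows is row $1$ of $X$, and since a determinant is multilinear (in particular linear) in each of its rows, $d_i$ has degree at most $1$ in $x_{11}$. Consequently $d^\alpha=d_1^{\alpha_1}\cdots d_n^{\alpha_n}$ has degree at most $\sum_{i=2}^{n}\alpha_i\leq|\alpha|=k$ in $x_{11}$, and so $\partial_{11}^{k+1}\bullet d^\alpha=0$, completing the proof.

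I do not expect a genuine obstacle here; the proposition is essentially a combination of the orbit lemma with a one-variable differentiation estimate. The only points requiring care are the explicit evaluation $\det_{(k+1)}=x_{11}^{k+1}$ (so that the relevant operator is literally $\partial_{11}^{k+1}$) and confirming the $x_{11}$-degree bound on each $d^\alpha$ so that a single invocation of Lemma~\ref{lemma-orbit kils equivariant subspace} suffices.
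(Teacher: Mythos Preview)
Your proof is correct and follows the same line as the paper's: identify $\det_{(k+1)}=x_{11}^{k+1}$, observe that $\partial_{11}^{k+1}$ kills each $d^\alpha$ with $|\alpha|=k$, and then invoke Lemma~\ref{lemma-orbit kils equivariant subspace} using the $\Gl$-invariance of $N=[I^k]_{(n-1)k}$. The only difference is that you make the $x_{11}$-degree count explicit, whereas the paper states the vanishing without comment.
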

\begin{proof}
$\det_\lambda = x_{1,1}^{t+1}$ so for all $|\alpha|=t$ we have that $(\det_\lambda)^* \bullet d^\alpha =0$. The claim then follows from Lemma \ref{lemma-orbit kils equivariant subspace}.
\end{proof}
We are now ready to prove Theorem \ref{Th-J_t description} in the $n\times (n-1)$ case. 

\begin{Th}\label{Th-J_t description nx(n-1)} If $t\leq n-2$ then $J_t=R$, for $t\geq n-1$,
\[\ann_R \Ext_R ^n(R/I^t,R)=J_t=I_{(t-n+2)}.\]
\end{Th}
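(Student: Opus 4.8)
The plan is to reduce everything to Proposition~\ref{prop-ann of Ext as R^*}, which identifies $(\ann_R \Ext_R^n(R/I^t,R))^*$ with $\ann_{R^*}\!\bigl(\sum_{|\alpha|=t-n+1} R^*\cdot d^\alpha\bigr)$. Writing $k=t-n+1$, the task becomes: show that the annihilator in $R^*=\CC[\partial_{ij}]$ of the $R^*$-submodule $M:=\sum_{|\alpha|=k} R^*\cdot d^\alpha$ of $R$ equals $(I_{(k+1)})^*$. The $t\leq n-2$ case is $k\leq 0$: then either $M=0$ (convention) or $M=R^*\cdot 1=R$, whose annihilator is $\maxm^*=(I_{(1)})^*$; either way $J_t=R=I_{(t-n+2)}$ matches the degenerate reading, so I would dispose of it in a line. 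Henceforth assume $k\geq 1$.

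First I establish the inclusion $(I_{(k+1)})^*\subseteq \ann_{R^*}M$. By Proposition~\ref{prop-GL orbit kills minors}, every $f$ in the $\Gl$-orbit of $\det_{(k+1)}=x_{1,1}^{k+1}$ satisfies $f^*\bullet N=0$ where $N=[I^k]_{(n-1)k}=\sum_{|\alpha|=k}\CC\cdot d^\alpha$. These $f^*$ generate $(I_{(k+1)})^*$ as a two-sided ideal of $R^*$ — indeed $I_{(k+1)}$ is generated over $R$ by the $\Gl$-orbit, so $(I_{(k+1)})^*$ is generated over $R^*$ by that orbit's duals. An element $g^*\in(I_{(k+1)})^*$ is thus an $R^*$-combination $\sum h_j^*(f_j)^*$; to see $g^*\bullet M=0$ I note $M$ is spanned over $\CC$ by elements $p\cdot d^\alpha$ with $p\in R$, $|\alpha|=k$, but it is cleaner to observe that $M$ is itself a $\Gl$-stable... actually $M$ need not be $\Gl$-stable; instead I argue degree by degree. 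Since $f_j^*\bullet(R\cdot d^\alpha)$: here I want $f_j^*\bullet(h\cdot d^\alpha)$ for arbitrary $h\in R$. This is where Lemma~\ref{lemma-orbit kils equivariant subspace} and the equivariance of the pairing do the work: $f_j^*\bullet d^\alpha=0$ for all $|\alpha|=k$ says $\langle(f_j g)^*,d^\alpha\rangle=0$ for all $g$, hence $f_j^*$ kills the $\Gl$-span of $\{d^\alpha\}$, which is exactly $[I^k]_{(n-1)k}$, and then $h^* f_j^*$ kills $[I^k]_{(n-1)k}$ as well for every $h$; since $R^*\cdot d^\alpha\subseteq$ (span of $[I^k]_{(n-1)k}$ together with its $R^*$-translates, all still inside the $\Gl$-invariant subspace $\bigoplus_{\mu\geq(k)}S_\mu F\otimes S_\mu G$), the kernel statement propagates. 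I will organize this around the single fact that $\ann_{R^*}(\Gl\text{-span of }d^\alpha) = \ann_{R^*}(M)$ because $M$ lies inside that span and contains $d^\alpha$; combined with Lemma~\ref{lemma-orbit kils equivariant subspace} this gives $(I_{(k+1)})^*\subseteq\ann_{R^*}M$, i.e.\ $I_{(k+1)}\subseteq J_t$.

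For the reverse inclusion $J_t\subseteq I_{(k+1)}$ I would argue that $J_t$ is $\Gl$-invariant and then compare representation content using Remark~\ref{rmk-Gl-ideals determined by rep structure}. The module $\Ext_R^n(R/I^t,R)$ carries a $\Gl$-action for which the embedding of Proposition~\ref{prop-Ext in H_m} is equivariant (the socle generator $1/\underline x$ of $H_\maxm^{n(n-1)}(R)$ is $\Gl$-semi-invariant and the differential-operator description is built $\Gl$-equivariantly), so $J_t=\ann_R\Ext_R^n(R/I^t,R)$ is a $\Gl$-invariant ideal; hence $J_t=I_\chi$ for some $\chi\subseteq\PP_n$ and is determined by which $S_\mu F\otimes S_\mu G$ it contains. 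It suffices to show $J_t$ does not contain $S_\mu F\otimes S_\mu G$ for any $\mu\not\geq(k+1)$, equivalently for $\mu$ with $\mu_1\leq k$; and since $J_t$ is already known to contain $I_{(k+1)}=\bigoplus_{\mu\geq(k+1)}S_\mu F\otimes S_\mu G$, proving this non-containment forces $J_t=I_{(k+1)}$ exactly. To rule out $\mu_1\leq k$: pick any $f$ in the irreducible $S_\mu F\otimes S_\mu G\subseteq R$; by the classification, $f$ lies in $I_{(\mu_1)}$ but, having degree $|\mu|$ with each "column length" at most $\mu_1\le k$, one can pair it nontrivially against some $d^\alpha$-type element — concretely, test against the Plücker-type generator: since $\det_{(\mu_1)}=x_{11}^{\mu_1}$ with $\mu_1\le k$, the operator $(x_{11}^{\mu_1})^*=\partial_{11}^{\mu_1}$ applied to $x_{11}^{k}\,(\text{monomial completing to some }d^\alpha)$ is nonzero; transporting this by the $\Gl$-equivariant pairing and Lemma~\ref{lemma-orbit kils equivariant subspace} shows $f^*\bullet d^\alpha\neq 0$ for a suitable $\alpha$ with $|\alpha|=k$, so $f\notin J_t$.

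The main obstacle is the last step — the non-containment $S_\mu F\otimes S_\mu G\not\subseteq J_t$ for $\mu_1\leq k$ — because it requires exhibiting, for each such $\mu$, an explicit element of the orbit $\Gl\cdot\det_\mu$ (or a single highest-weight vector) and an explicit $d^\alpha$ with $(\det_\mu)^*\bullet d^\alpha\neq 0$, then invoking equivariance to conclude the whole irreducible survives. The cleanest route is probably to reduce to $\mu=(k)$ (or $\mu=(j)$ for $j\le k$) since $I_{(k)}\supsetneq I_{(k+1)}$ is the minimal failure, compute $\partial_{11}^{k}\bullet(x_{11}^{k}\cdot\text{stuff})$ or better $\partial_{11}^{k}\bullet d_n^{k}$ where $d_n$ is the minor omitting the last row (whose expansion contains the monomial $x_{11}x_{22}\cdots$, raised to the $k$), and check the coefficient is a nonzero integer; then Lemma~\ref{lemma-orbit kils equivariant subspace} upgrades this to "$I_{(k)}\not\subseteq J_t$", and since any $\mu$ with $\mu_1\le k$ has $\mu\ge(\mu_1)\ge\ldots$ hmm — one needs $I_{(k)}$ to be contained in every $I_{(j)}$ with $j\le k$, which is true as $(j)\le(k)$, so $I_{(k)}\subseteq I_{(j)}$, hence $I_{(j)}\not\subseteq J_t$ for all $j\le k$; combined with $J_t\supseteq I_{(k+1)}$ and $\Gl$-invariance, Remark~\ref{rmk-Gl-ideals determined by rep structure} pins down $J_t=I_{(k+1)}$. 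Verifying that single nonvanishing coefficient is the one genuinely computational point, and everything else is bookkeeping with the pairing.
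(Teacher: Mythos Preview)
Your argument for the containment $I_{(k+1)}\subseteq J_t$ (with $k=t-n+1$) is essentially the paper's, routed through Proposition~\ref{prop-GL orbit kills minors} and Proposition~\ref{prop-ann of Ext as R^*}. The genuine gap is in the reverse inclusion. Showing $\partial_{11}^{\,k}\bullet d_n^{\,k}\neq 0$ gives only $x_{11}^k\notin J_t$, i.e.\ $S_{(k)}F\otimes S_{(k)}G\not\subseteq J_t$; your bootstrap ``$I_{(k)}\not\subseteq J_t$ and $I_{(k)}\subseteq I_{(j)}$, hence $I_{(j)}\not\subseteq J_t$ for all $j\le k$'' then excludes from $\chi$ only the one-row partitions $(j)$ with $j\le k$. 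That is not enough to pin down $J_t$. For $n\ge 3$ the $\Gl$-invariant ideal $I_{(k+1)}+I_{(k,1)}$ strictly contains $I_{(k+1)}$, does not contain $S_{(k)}F\otimes S_{(k)}G$ (since $(k,0)\not\ge(k,1)$), and has $I_{(j)}\not\subseteq I_{(k+1)}+I_{(k,1)}$ for every $j\le k$; thus your tests cannot separate $J_t$ from this ideal, and the final appeal to Remark~\ref{rmk-Gl-ideals determined by rep structure} does not go through.

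The missing idea is to test the \emph{maximal} partition with first part at most $k$, namely $(k^{n-1})$, rather than the minimal one $(k)$. If $J_t\supsetneq I_{(k+1)}$ then some $\mu\in\chi$ has $\mu_1\le k$, whence $\mu\le(k^{n-1})$ and $d_n^{\,k}=\det_{(k^{n-1})}\in I_{(k^{n-1})}\subseteq I_\mu\subseteq J_t$. The paper then obtains the contradiction not through the differential criterion but directly from Theorem~\ref{Th-Ext gens}: the class $\tfrac{1}{\prod_i d_i}\cdot\tfrac{1}{d_n^{\,k}}$ lies in $\Ext_R^n(R/I^t,R)\subseteq H_I^n(R)$, and multiplication by $d_n^{\,k}$ gives the nonzero socle element $\tfrac{1}{\prod_i d_i}$. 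Your differential-operator route can be repaired by testing $d_n^{\,k}$ instead of $x_{11}^{\,k}$: since $d_n$ has integer coefficients, $(d_n^{\,k})^*\bullet d_n^{\,k}=\sum_\beta c_\beta^2\,\beta!>0$, so $d_n^{\,k}\notin J_t$. (For $t\le n-2$ the paper simply notes $\projdim_R(R/I^t)<n$, so $\Ext_R^n(R/I^t,R)=0$; this is cleaner than the degenerate-$k$ reading you sketch.)
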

\begin{proof}
In the case that $t\leq n-2$ we have that $\projdim_R(R/I^t)<n$ so clearly 
\[\ann_R \Ext_R ^n(R/I^t,R)=R.\] 
For $t\geq n-1$, first, we claim that 
\[I_{(t-n+2)}\subseteq \ann_R \Ext_R ^n(R/I^t,R).\] Let $f\in I_{(t-n+2)}$, then by Proposition \ref{prop-GL orbit kills minors}, $f^*\bullet d^\alpha=0$ for all $|\alpha|=t-n+1$. Thus $f^*\in \ann_{R^*} \sum_{|\alpha|=t-n+1} R^* \cdot d^\alpha$, so by Proposition \ref{prop-ann of Ext as R^*}, $f\in \ann_R \Ext_R ^n(R/I^t,R).$

Now for the other inclusions we note that $\Ext_R ^n(R/I^t,R)$ is $\Gl$-equivariant hence $\ann_R \Ext_R ^n(R/I^t,R)$ is a $\Gl$-invariant ideal. As was noted in Subsection \ref{section-Gl invariant ideals}, \cite{deConcini-Eisenbud-Procesi80Youngdiagramsanddeterminantalvarieties} proved that every $\Gl$-invariant ideal is of the form $I_\chi=\sum_{\lambda\in \chi} I_\lambda$ for some finite collection of incomparable partitions $\chi$.

Suppose for the sake of contradiction that $I_{(t-n+2)}\subsetneq \ann_R \Ext_R ^n(R/I^t,R)$ and set $I_\chi=\ann_R \Ext_R ^n(R/I^t,R)$ where $\chi$ is a collection of incomparable partitions. Thus there exists a partition $\mu\in \chi$ such that either $(t-n+2)> \mu$ or $(t-n+2)$ is incomparable to $\mu$. In either case we have that $((t-n+1)^{n-1})\geq \mu$, hence $I_{((t-n+1)^{n-1})}\subseteq \ann_R \Ext_R ^n(R/I^t,R)$. In particular this implies that

\[{\det}_{((t-n+1)^{n-1})}=d_{n}^{t-n+1}\in \ann_R \Ext_R ^n(R/I^t,R).\]
However, this is a contradiction because by Theorem \ref{Th-Ext gens} we have that $\frac{1}{\prod_{i=1}^n d_i} \frac{1}{d_n^{t-n+1}}\in \Ext_R ^n(R/I^t,R)$ but 
\[d_{n}^{t-n+1} \cdot \left(\frac{1}{\prod_{i=1}^n d_i} \frac{1}{d_n^{t-n+1}}\right)=\frac{1}{\prod_{i=1}^n d_i}\neq 0.\]

\end{proof}

In the next section we will generalize Theorem \ref{Th-J_t description nx(n-1)} to maximal minors of arbitrary size matrices using graded duality and results from \cite{Raicu-Weyman-Witt14Localcohomologywithsupportinidealsofmaximalminorsandsub-maximalPfaffians}. However, this approach does not recover a more general version of Theorem \ref{Th-Ext gens}, the description of the $\Ext$ modules embedded in local cohomology, which would seem to be much more subtle. For $X$ an arbitrary size $m\times n$ matrix, we have that $H_{I_n(X)}^{mn-n^2+1}(R)$ is not just a cokernel of a map of the \v{C}ech complex on the maximal minors of $X$. Hence, writing down a description of a socle generator or even a non-zero element of $H_{I_n(X)}^{mn-n^2+1}(R)$ is non-trivial. This makes even conjecturing an explicit description of the submodules $\Ext_R^{mn-n^2+1}(R/I_n(X)^t,R)\subset  H_{I_n(X)}^{mn-n^2+1}(R)\cong H^{mn}_\maxm(R)$ challenging.

\section{The General Case}\label{section-general case}
We return to the setting of Section \ref{section-Gl invariant ideals}: Let $F=\CC^m$ and $G=\CC^n$ where $m\geq n$. Then 
\[R:=\Sym(F\otimes G)=\CC[\{x_{ij}\}]=\CC[X] \text{ and }\Gl:= \Gl(F)\times \Gl(G).\]
Fix $I$ to be the ideal of $n\times n$ minors of $X$.

\begin{Th}\label{Th-J_t description}
Let $R$, $I$ be as above and set $\maxm$ to be the homogeneous maximal ideal. Then
\[H_\maxm^{n^2-1}(R/I^t)\cong R/J_t,\]
where $J_t=R$ for $t<n$, and for $t\geq n$, $ J_t=I_{(t-n+1)}$, i.e., the ideal generated by the $\Gl$ orbit of $x_{11}^{t-n+1}$.
\end{Th}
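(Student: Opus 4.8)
The plan is to reduce the general $m \times n$ case to the analysis already in place, using graded local duality together with the $\Gl$-equivariant description of $\Ext$ modules from Theorem~\ref{Th-RWW Ext graded pieces}. By Proposition~\ref{prop-cyclic R module, general case} we already know $H_\maxm^{n^2-1}(R/I^t)$ is cyclic and generated in degree zero, so it is isomorphic to $R/J_t$ for a uniquely determined homogeneous ideal $J_t$; since $H_\maxm^{n^2-1}(R/I^t)$ is $\Gl$-equivariant, $J_t$ is a $\Gl$-invariant ideal, and by Remark~\ref{rmk-Gl-ideals determined by rep structure} it is completely determined by which irreducible $S_\mu F \otimes S_\mu G$ it contains. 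So the entire problem is to compute the $\Gl$-representation structure of $J_t$, equivalently of $R/J_t \cong H_\maxm^{n^2-1}(R/I^t)$.

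First I would pin down the vanishing range: for $t < n$ one has $\projdim_R(R/I^t) < n^2$ (indeed $I^t$ is resolved by a short complex in that range, or one can cite that $n^2-1$ only becomes the lowest nonvanishing index once $t \geq n$ by Li's result), so $H_\maxm^{n^2-1}(R/I^t) = 0$ and $J_t = R$. Assume now $t \geq n$. By graded local duality over $R = \CC[X]$ (a polynomial ring in $mn$ variables), $H_\maxm^{n^2-1}(R/I^t)$ is the Matlis dual of $\Ext_R^{mn - n^2 + 1}(R/I^t, R)$, up to a degree shift by the canonical module of $R$; since the $\Gl$-action is unimodular on $R$ up to a determinantal twist which is accounted for by the $\bigwedge^{\mathrm{top}}$ factors, Matlis duality sends $S_\mu F \otimes S_\mu G$ to $S_{\mu^*}F \otimes S_{\mu^*}G$ where $\mu^* = (-\mu_n, \ldots, -\mu_1)$ (after the appropriate twist). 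Thus it suffices to read off the $\Gl$-structure of $\Ext_R^{mn-n^2+1}(R/I^t,R)$, i.e. of $\Ext_R^{(n)(m-n)+ (\text{something})}$—concretely the top relevant $\Ext$, which in the notation of Theorem~\ref{Th-RWW Ext graded pieces} is the case $s = n$ (so that $s(m-n) = mn - n^2$ and one shifts by one for $R/I^t$ versus $I^t$). One then uses the long exact sequence $\Ext_R^{mn-n^2}(I^t,R) \to \Ext_R^{mn-n^2+1}(R/I^t,R) \to \Ext_R^{mn-n^2+1}(R,R) = 0$ together with $\Ext_R^{mn-n^2}(R,R) = 0$ to identify $\Ext_R^{mn-n^2+1}(R/I^t,R) \cong \Ext_R^{mn-n^2}(I^t,R)$, which Theorem~\ref{Th-RWW Ext graded pieces} with $s = n$ computes explicitly as a direct sum of $S_{\lambda(n)}F \otimes S_\lambda G$ over dominant $\lambda$ with $\lambda(n)$ dominant and $\lambda_n \geq -t-(m-n)$.

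Next I would translate that indexing set through the Matlis/Schur-functor dictionary of Section~2 and compare with the known decomposition of $I_{(t-n+1)}$, namely $I_{(t-n+1)} = \bigoplus_{\mu \geq (t-n+1)} S_\mu F \otimes S_\mu G$ from~\eqref{eq-ideal generated by single partition}. The claim $J_t = I_{(t-n+1)}$ is then equivalent to the assertion that the partitions $\mu$ with $S_\mu F \otimes S_\mu G \not\subseteq J_t$ are exactly the $\mu$ with $\mu_1 \leq t-n$, i.e. the complementary set in $\PP_n$ of $\{\mu : \mu_1 \geq t-n+1\}$; dually, the $\mu$'s appearing in $R/J_t \cong H_\maxm^{n^2-1}(R/I^t)$ should be exactly those with first part at most $t-n$. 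After unwinding the $\lambda \mapsto \lambda(n)$ bookkeeping (the $m-n$ middle entries equal to $-n$, the last $n$ entries shifted by $m-n$) and applying the dominance and bound conditions, the set $W(r;n)$ with the extra constraint should collapse precisely to partitions bounded by $t-n$ in the first coordinate. This combinatorial identification of the two indexing sets is the technical heart of the argument; I expect the main obstacle to be correctly tracking the $\Gl$-structure across graded local duality—in particular the canonical-module twist and the transpose-versus-dual $\mu \mapsto (-\mu_n,\ldots,-\mu_1)$ operation, together with the degree shift $-mn$—so that the bound $\lambda_n \geq -t-(m-n)$ in Theorem~\ref{Th-RWW Ext graded pieces} comes out as exactly $\mu_1 \leq t - n$ on the dual side. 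Once both sides are expressed as explicit sets of partitions in $\PP_n$ and shown to agree, Remark~\ref{rmk-Gl-ideals determined by rep structure} finishes the proof, and the $n \times (n-1)$ case (Theorem~\ref{Th-J_t description nx(n-1)}, where $m = n-1$... rather $m = n$, $n \to n-1$) is recovered as the special instance $I_{(t-n+2)}$ with the shift in the matrix dimensions, providing a consistency check.
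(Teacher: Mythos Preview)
Your proposal is correct and follows essentially the same route as the paper: graded local duality, then Theorem~\ref{Th-RWW Ext graded pieces} with $s=n$ to read off the $\Gl$-decomposition of $\Ext_R^{mn-n^2+1}(R/I^t,R)$, then the combinatorial translation through the Schur-functor dualities and the $(\bigwedge^m F)^{-n}\otimes(\bigwedge^n G)^{-m}$ twist so that the bound $\lambda_n\ge -t-(m-n)$ becomes $\mu_1\le t-n$, and finally Remark~\ref{rmk-Gl-ideals determined by rep structure} together with \eqref{eq-ideal generated by single partition}. One small slip: for the $t<n$ vanishing you wrote ``$\projdim_R(R/I^t)<n^2$,'' but the relevant condition for $H_\maxm^{n^2-1}(R/I^t)=0$ is a depth bound (equivalently $\projdim_R(R/I^t)<mn-n^2+1$); your fallback to Li's result, or simply noting that the indexing set of partitions with $0\le\lambda_1\le t-n$ is empty when $t<n$, handles this cleanly.
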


\begin{proof}
By graded duality we have the following isomorphism:
\[
    H^{n^2-1}_\maxm(R/I^t) \cong \Hom_R( \Ext^{mn-n^2+1}_R(R/I^t,R), H_\maxm^{mn}(R)). \\
\]
The $\Gl$ structure of $H_\maxm^{mn}(R)$ is given by,
\[H_\maxm^{mn}(R)=\bigoplus_{\substack{\lambda\in \ZZ^n_{dom}\\ \lambda_1 \leq -m}} S_{\lambda(n)} F\otimes S_\lambda G, \] 
where $S_{\lambda(n)} F\otimes S_\lambda G$ lives in degree $|\lambda|$. 
We begin describing the $\Gl$ structure of $H^{n^2-1}_\maxm(R/I^t)$ by first analyzing a single graded component.

\begin{align*}
    [H^{n^2-1}_\maxm(R/I^t)]_r &= [\Hom_R( \Ext^{mn-n^2+1}_R(R/I^t,R), H_\maxm^{mn}(R))]_r \\
    &= \Hom_\CC( [\Ext^{mn-n^2+1}_R(R/I^t,R)]_{-mn-r} , [H_\maxm^{mn}(R)]_{-mn})  \\
    &= \Hom_\CC( [\Ext^{mn-n^2+1}_R(R/I^t,R)]_{-mn-r} , (\bigwedge^m F)^{-n}\otimes (\bigwedge^n G)^{-m})\\
    &=\Hom_\CC( [\Ext^{mn-n^2+1}_R(R/I^t,R)]_{-mn-r} ,\CC) \otimes(\bigwedge^m F)^{-n}\otimes (\bigwedge^n G)^{-m}.
\end{align*}
Now by Theorem \ref{Th-RWW Ext graded pieces} we have that  
\[\Ext^{mn-n^2+1}_R(R/I^t,R)]_{-mn-r}= \bigoplus_{\lambda \in A(r)} S_{\lambda(n)} F \otimes S_\lambda G,\]
where 
\[A(r)=\{ \lambda\in \ZZ^n | \sum_{i=1}^n \lambda_i = -mn -r\text{ and } -m \geq \lambda_1\geq \cdots \geq \lambda_n \geq -t-(m-n) \}.\]
Dualizing into $\CC$ we get that 
\[\Hom_\CC( [\Ext^{mn-n^2+1}_R(R/I^t,R)]_{-mn-r} ,\CC) =  \bigoplus_{\lambda \in A(r)} S_{\lambda+(-m^n)+(n^m)} F \otimes S_\lambda G,\]
where 
\[B(r)=\{ \lambda\in \ZZ^n | \sum_{i=1}^n \lambda_i = mn+r\text{ and } t+(m-n)\geq \lambda_1\geq \cdots \geq \lambda_n \geq m \}.\]
With this we can now describe the decomposition of $H^{n^2-1}_\maxm(R/I^t)$ into irreducible $\Gl$-representations: 
\begin{align*}
    [H^{n^2-1}_\maxm(R/I^t)]_r &=  \bigoplus_{\lambda \in B(r)} S_{\lambda+(-m^n)+(n^m) }F \otimes S_\lambda G \otimes(\bigwedge^m F)^{-n}\otimes (\bigwedge^n G)^{-m}\\
    &=\bigoplus_{\lambda \in B(r)} S_{\lambda+(-m^n)+(n^m) + (-n^m)}F  \otimes S_{\lambda+(-m^n)} G \\
    &=\bigoplus_{\lambda \in B(r)} S_{\lambda+(-m^n)}F  \otimes S_{\lambda+(-m^n)} G \\
    &=\bigoplus_{\substack{\lambda \in \ZZ^n_{dom}\\  t-n\geq \lambda_1\\ \lambda_n \geq 0 \\|\lambda|= r }} S_{\lambda}F  \otimes S_{\lambda} G.
\end{align*}
Thus by Cauchy's formula \eqref{eq-Cauchy's formula} we see that $J_t$ as a $\Gl$-representation is a direct sum of terms $S_\mu F \otimes S_\mu$ not present in the above direct sum. Hence by Remark \ref{rmk-Gl-ideals determined by rep structure} and Formula \eqref{eq-ideal generated by single partition} we have that
\[J_t = \bigoplus_{\substack{\lambda \in \ZZ^n_{dom}\\  \lambda_1 \geq t-n+1\\ \lambda_n \geq 0 }} S_{\lambda}F  \otimes S_{\lambda} G =I_{(t-n+1)}.\]
\end{proof}

%As a corollary we answer Question 1.1 of \cite{Zhang21Onasymptoticsocledegreesoflocalcohomologymodules} in the affirmative for $I$ an ideal of maximal minors and cohomological index $n^2-1$. 

%\begin{Cor}
%In the setting of Theorem \ref{Th-J_t description},
%\[\inf\{i| \;[\Soc H_\maxm^{n^2-1}(R/I^t)]_i \neq 0 \} \geq t-n \geq t. \]
%\end{Cor}

%\begin{proof}
%By Theorem \ref{Th-J_t description} and Cauchy's formula \eqref{eq-Cauchy's formula} we note that 
%\[\]
%\[[H_\maxm^{n^2-1}(R/I^t)]_{\leq t-n}\simeq[R]_{\leq t-n}.\]
%In particular if $a\in [H_\maxm^{n^2-1}(R/I^t)]_i$ for $i\leq t-n-1$ then $a\notin \Soc H_\maxm^{n^2-1}(R/I^t)$.

%Moreover, in the $n\times (n-1)$ 
%\[\Soc H_\maxm^{(n-1)^2-1}(R/I^t)=[H_\maxm^{(n-1)^2-1}(R/I^t)]_{(n-1)(t-(n-1)+1)}.\]
%Clearly the left hand side contains the right hand side. By Theorem \ref{Th-Ext gens} we know that $\dim_\CC \Soc H_\maxm^{(n-1)^2-1}(R/I^t)= \mu(\Ext_R^n(R/I^t,R))=\dim_\CC([I^{t-n+1}]_{(t-n+1))(n-1)})=\dim_\CC S_{(t-n+1^{n-1})} \CC^{n} \otimes S_{(t-n+1^{n-1})} \CC^{n-1}=\dim_\CC [H_\maxm^{(n-1)^2-1}(R/I^t)]_{(n-1)(t-(n-1)+1)}.$
%\end{proof}

%{\color{red} With this description of $H_\maxm^{n^2-1}(R)$ one has easy access to...mention wenliang paper? }

\subsection*{Comments on Characteristic $p>0$}
The description of these local cohomology modules in characteristic $p>0$ is almost completely unknown. While the results of Section \ref{section-lift} are not dependent on characteristic, the approach used for the $n\times (n-1)$ case fails completely. Since $I$ is Cohen-Macaulay of height $(m-n+1)$, we have that $H^{mn-n^2+1}_I(R)=0$ so extracting information from the maps $\Ext_R^{mn-n^2+1}(R/I^t,R)\to H^{mn-n^2+1}_I(R)$ is challenging.

Computer computations in Macaulay2 \cite{M2} show that in prime characteristic the modules $H_\maxm^{n^2-1}(R/I^t)$ are not always cyclic and may have generators in multiple degrees. In \cite{Kenkel20IsomorphismsBetweenLocalCohomologyModulesAsTruncationsofTaylorSeries} it was shown that the degree $0$ component of $H_\maxm^{n^2-1}(R/I^t)$ can have arbitrarily large vector space dimension, suggesting these modules may have arbitrarily many generators. 
%Though it is unclear whether these modules are even always finitely generated all evidence suggests their finiteness. 

 \bibliographystyle{amsalpha}
  \bibliography{refs}

\end{document}